\newtheorem{theorem}{Theorem}[section]
\newtheorem{lemma}[theorem]{Lemma}
\newtheorem{proposition}[theorem]{Proposition}
\newtheorem{corollary}[theorem]{Corollary}
\theoremstyle{definition}
\newtheorem{definition}[theorem]{Definition}
\newenvironment{example}
  {\pushQED{\qed}\examplex}
  {\popQED\endexamplex}
\theoremstyle{remark}
\newtheorem{remark}[theorem]{Remark}
\newtheorem{remarks}[theorem]{Remarks}
\numberwithin{equation}{section}
\newcommand{\1}[1]{{\mathbbm{1}\mkern -1.5mu}{\{#1\}}}
\newcommand{\R}{{\mathbb R}}
\newcommand{\Z}{{\mathbb Z}}
\newcommand{\N}{{\mathbb N}}
\newcommand{\ZP}{{\mathbb Z}_+}
\newcommand{\RP}{{\mathbb R}_+}
\DeclareMathOperator{\Exp}{\mathbb{E}}
\let\Pr\relax
\DeclareMathOperator{\Pr}{\mathbb{P}}
\DeclareMathOperator{\card}{\#}
\newcommand{\geo}[1]{\mathrm{Geom}_0 \left( {#1} \right)}
\newcommand{\eps}{\varepsilon}
\newcommand{\re}{{\mathrm{e}}}
\newcommand{\ud}{{\mathrm d}}
\newcommand{\cL}{{\mathcal L}}
\newcommand{\cP}{{\mathcal P}}
\newcommand{\cV}{{\mathcal V}}
\newcommand{\as}{\ \text{a.s.}}
\newcommand{\IP}{{\mathbb P}}
\newcommand{\IE}{{\mathbb E}}
\newcommand{\hsig}{{\hat \sigma}}
\newcommand{\xheaviside}{x_{\textup{H}}}
\newcommand{\stochdoml}{\precsim}
\newcommand{\stochdomg}{\succsim}
\newcommand{\cVF}{{\mathcal V}_{\mathrm{F}}}
\newcommand{\bbX}{{\mathbb X}}
\newcommand{\bbXB}{{\mathbb X}_{\mathrm{F}}}
\newcommand{\bbD}{{\mathbb D}}
\newcommand{\bbDB}{{\mathbb D}_{\mathrm{F}}}
 \newcommand{\abar}{{\overline \alpha}}
 \newcommand{\bbar}{{\overline \beta}}
  \newcommand{\oeta}{{\overline \eta}}
  \newcommand{\heta}{{\hat \eta}}
  \newcommand{\teta}{{\tilde \eta}}
  \newcommand{\tcL}{{\widetilde \cL}}
\def\namedlabel#1#2{\begingroup  
    (#2)%
    \def\@currentlabel{#2}%
    \phantomsection\label{#1}\endgroup
}
\newlist{myenumi}{enumerate}{10}
\setlist[myenumi]{leftmargin=0pt, labelindent=\parindent, listparindent=\parindent, labelwidth=0pt, itemindent=!, itemsep=1pt, parsep=4pt}
\newlist{thmenumi}{enumerate}{10}
\setlist[thmenumi]{leftmargin=0pt, labelindent=\parindent, listparindent=\parindent, labelwidth=0pt, itemindent=!}
\title{Semi-infinite particle systems with exclusion interaction and heterogeneous jump rates}
\author{Mikhail Menshikov\footnote{\raggedright Department of Mathematical Sciences, Durham University, Durham, UK. \href{mailto:mikhail.menshikov@durham.ac.uk}{\texttt{mikhail.menshikov@durham.ac.uk}}, \href{mailto:andrew.wade@durham.ac.uk}{\texttt{andrew.wade@durham.ac.uk}}.} \and Serguei Popov\footnote{Centro de Matem\'atica, University of Porto, Porto, Portugal. \href{mailto:serguei.popov@fc.up.pt}{\texttt{serguei.popov@fc.up.pt}}.} \and Andrew Wade\footnotemark[1]}
\date{\today}
\begin{document}
\maketitle

\begin{abstract}
We study semi-infinite particle systems on the one-dimensional integer lattice,
where each particle performs a continuous-time nearest-neighbour random walk,
with jump rates intrinsic to each particle, subject to an exclusion interaction which
suppresses jumps that would lead to more than one particle occupying any site.
Under appropriate hypotheses on the jump rates (uniformly bounded rates is sufficient) and started from an initial condition that is a finite perturbation of the close-packed configuration, we give conditions under which the particles evolve as a single, semi-infinite ``stable cloud''. More precisely, we show that inter-particle separations converge to a product-geometric stationary distribution, and that the location of every particle obeys a strong law of large numbers with the same characteristic speed.  \end{abstract}

\medskip

\noindent
{\em Key words:}  
Exclusion process, infinite Jackson network, interacting particle systems, lattice Atlas model, asymptotic speeds, invariant measures, product-geometric distribution.

\medskip

\noindent
{\em AMS Subject Classification:} 60K35 (Primary), 60J27, 60K25, 90B22 (Secondary).

\section{Introduction and main results}

\subsection{Definition of the model}
\label{sec:model}

The exclusion process is a prototypical interacting-particle system, representing the dynamics of a  lattice gas
with hard-core interaction, originating with~\cite{spitzer,mgp}, and a
subject of ongoing interest in probability theory and
non-equilibrium statistical physics. The most well-studied versions of the model 
have homogeneous particles (see Section~\ref{sec:literature} for some comments on the literature).
Our interest here is in non-homogeneous systems. In the present paper,
 a sequel to the paper~\cite{mmpw} of the authors and V.A.\ Malyshev, which examined
\emph{finite} systems, we consider \emph{semi-infinite} systems. 
That is, we have particles enumerated left-to-right by $\N := \{1,2,3,\ldots\}$,
living  on the one-dimensional integer lattice $\Z$, performing continuous-time, nearest-neighbour random walks with exclusion interaction,
in which each particle possesses arbitrary finite positive jumps rates.  We describe the model more precisely.

The configuration space of the system is $\bbX$, given by
\begin{equation}
\label{eq:configuration-space}
\bbX  := \{ (x_1, x_2, \ldots ) \in \Z^{\N} : x_1 < x_2 < \cdots  \} .  \end{equation}
The dynamics are described by a time-homogeneous, continuous-time Markov process on~$\bbX$,
specified by finite, non-negative rate parameters $a_1,b_1, a_2, b_2, \ldots$. The $k$th particle (enumerated from the left)
attempts to make a nearest-neighbour jump to the left at rate~$a_k$. If, when the corresponding exponential clock rings,
the site to the left is unoccupied, the jump is executed and the particle moves, but if the destination is occupied by another particle, the attempted jump is suppressed and the particle does not move (this is the exclusion rule). 
Similarly, the $k$th particle attempts to jump to the right at rate $b_k$,
subject to the exclusion rule. 
The exclusion constraint ensures  that there can be at most one particle at any site of~$\Z$ at any given time,
and that the order of particles is preserved; in particular, the left-most particle is always the particle labelled~1.
See Figure~\ref{fig:particles} for a picture.
 
\begin{figure}[h]
\centering
\scalebox{0.85}{
 \begin{tikzpicture}[domain=0:1, scale=1.0]
\draw[dotted,<->] (0,0) -- (13,0);
\node at (13.4,0)       {$\Z$};
\draw[black,fill=white] (1,0) circle (.5ex);
\draw[black,fill=black] (2,0) circle (.5ex);
\node at (2,-0.6)       {\small $X_1 (t)$};
\draw[black,fill=white] (3,0) circle (.5ex);
\draw[black,fill=black] (4,0) circle (.5ex);
\node at (4,-0.6)       {\small $X_2 (t)$};
\draw[dotted] (2,0) -- (2,2);
\draw[dotted] (4,0) -- (4,2);
\draw[black,<->] (2,1.5) -- (4,1.5);
\node at (3,1.9)       {\small $1+\eta_1 (t)$};
\draw[black,fill=black] (5,0) circle (.5ex);
\node at (5,-0.6)       {\small $X_3 (t)$};
\draw[dotted] (5,0) -- (5,1.7);
\draw[black,<->] (4,1.5) -- (5,1.5);
\node at (5,1.9)       {\small $1+\eta_2 (t)$};
\draw[black,fill=white] (6,0) circle (.5ex);
\draw[black,fill=white] (7,0) circle (.5ex);
\draw[black,fill=white] (8,0) circle (.5ex);
\draw[black,fill=black] (9,0) circle (.5ex);
\node at (9,-0.6)       {\small $X_4 (t)$};
\draw[dotted] (9,0) -- (9,2);
\draw[black,<->] (5,1.5) -- (9,1.5);
\node at (7,1.9)       {\small $1+\eta_3 (t)$};
\draw[black,fill=black] (10,0) circle (.5ex);
\node at (10,-0.6)       {\small $X_5 (t)$};
\draw[black,fill=black] (11,0) circle (.5ex);
\node at (11,-0.6)       {\small $~~~~~~ X_6 (t) ~ \cdots$};
\draw[black,fill=white] (12,0) circle (.5ex);
\node at (1.3,0.6)       {\small rate $a_1$};
\node at (2.5,0.6)       {\small $b_1$};
\draw[black,->,>=stealth] (2,0) arc (30:141:0.58);
\draw[black,->,>=stealth] (4,0) arc (30:141:0.58);
\node at (3.5,0.6)       {\small $a_2$};
\node at (5.5,0.6)       {\small $b_3$};
\draw[black,->,>=stealth] (9,0) arc (30:141:0.58);
\node at (8.5,0.6)       {\small $a_4$};
\node at (11.5,0.6)       {\small $b_6$};
\draw[black,->,>=stealth] (2,0) arc (150:39:0.58);
\draw[black,->,>=stealth] (5,0) arc (150:39:0.58);
\draw[black,->,>=stealth] (11,0) arc (150:39:0.58);
\end{tikzpicture}}
\caption{Schematic of the model showing the leftmost $6$ particles, illustrating some of the main notation. Filled circles represent particles, empty circles represent unoccupied lattice sites, and
directed arcs represent admissible transitions, with exponential rates indicated.}
\label{fig:particles}
\end{figure}
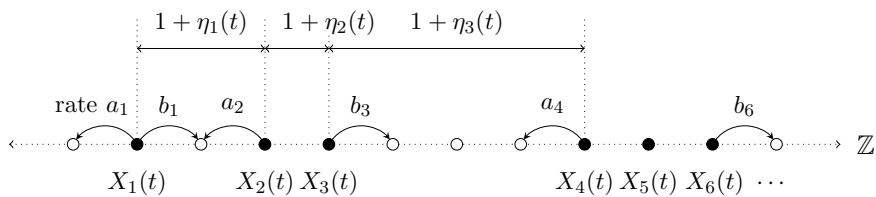

In the present paper, our main (but not only) interest will be in starting the system from
configurations that are approximately  close-packed. Define
\begin{equation}
\label{eq:XB-def}
    \bbXB := \Bigl\{ x \in \bbX : x_{k+1} - x_k = 1 \text{ for all but finitely many } k \in \N \Bigr\};
\end{equation}
we will refer to $\bbXB$ as the set of \emph{finite configurations}, because there are only finitely many empty sites (\emph{holes}) between particles.

Under some conditions on the rates $a_k, b_k$ that we will describe shortly, we will study dynamics of the
particle system started from $\bbXB$, and in particular focus
on the situation where the particles evolve as a single, semi-infinite ``stable cloud'', by which we mean that inter-particle separations are tight. In this case, we show that every particle obeys a strong law of large numbers with the same characteristic speed (negative or zero). These main results are presented in Section~\ref{sec:main-results}. First we address the question of conditions on the rates and initial configuration under which the  dynamics described above
give a well-defined Markov process.

We will often assume positivity of all the rates:
\begin{description}
\item\namedlabel{ass:positive-rates}{$\text{A}_0$} 
Suppose that $0 <a_k < \infty$ and $0 < b_k < \infty$ for all $k \in \N$.
\end{description}
The set $\bbXB \subset \bbX$ defined at~\eqref{eq:XB-def} is countable (whereas $\bbX$ is uncountable). The \emph{Heaviside} configuration 
is $\xheaviside := (0,1,2,3,\ldots) \in \bbXB$,
in which particles occupy $\ZP := \{ 0,1,2,\ldots \}$;
the other
elements of $\bbXB$ are finite perturbations of $\xheaviside$ and their translates. 
The following  rate condition (which ensures ``non-explosion'')
will provide an existence result in this case.

\begin{description}
\item\namedlabel{ass:a-inverse-sum}{$\text{A}_1$} 
Suppose that $\sum_{k \in \ZP} ( 1/a_k ) = \infty$.
\end{description}

Let $X (t) = (X_1 (t), X_2(t), \ldots  ) \in \bbX$ be the configuration of the Markov process
at time $t \in \RP := [0,\infty)$,
started from a fixed initial configuration $X(0) \in \bbX$.

\begin{proposition}[Existence of finitely-supported process]
\label{prop:existence-from-heaviside}
Suppose that~\eqref{ass:a-inverse-sum} holds, 
and 
 that~$X(0)\in\bbXB$. Then there exists a unique continuous-time Markov chain
 $(X(t))_{t \in \RP}$  on the countable state space $\bbXB$ (defined for all time $t \in \RP$)
 whose transition rates are as described above.
 \end{proposition}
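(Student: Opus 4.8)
The plan is to construct the process by the standard route for countable-state continuous-time Markov chains: build the minimal chain from the jump chain and the exponential holding times, and then use the rate condition~\eqref{ass:a-inverse-sum} to rule out explosion, which gives both existence for all time and uniqueness. First I would observe that from any configuration $x \in \bbXB$ the total jump rate is finite: only finitely many particles are adjacent to a hole, so only finitely many of the rates $a_k, b_k$ correspond to transitions that are actually enabled, and the sum of enabled rates is a finite number $q(x) < \infty$. (Here the key structural point, inherited from the exclusion dynamics, is that $\bbXB$ is invariant: a single nearest-neighbour jump changes the number of holes by at most moving one hole one step, so finite configurations stay finite.) Hence the embedded jump chain $(Y_n)_{n \ge 0}$ on $\bbXB$ with $Y_0 = X(0)$ is well-defined, together with i.i.d.\ $\mathrm{Exp}(1)$ variables giving holding times $S_n = E_n / q(Y_n)$, and one assembles the minimal process $X(t)$ in the usual way up to the explosion time $\zeta := \sum_{n \ge 0} S_n$.

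Second, I would prove $\zeta = \infty$ almost surely. The mechanism is that the leftmost particle, labelled~$1$, is the bottleneck: between any two jumps of particle~$1$ to the left, the "hole content" available to the rest of the system is bounded, so a large number of jumps can only occur if particle~$1$ itself moves left many times. More precisely, let $N_\ell$ be the number of leftward jumps made by particle~$1$ up to the $n$th jump of the whole system; since the configuration is finite with, say, $H$ holes initially, and holes are conserved, one shows that the total number of jumps the system can perform while particle~$1$ makes only $m$ leftward steps is at most a deterministic function of $m$ and $H$ (each hole can be shuffled only finitely often without particle~$1$ absorbing a leftward displacement — this is where the precise combinatorics of exclusion on a half-line enters). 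When particle~$1$ is at position $X_1 = j$ and is free to jump left (it always is, being leftmost), it does so at rate exactly $a_j$ modulo the time spent; comparing with a pure birth/death argument, the time for particle~$1$ to make its $(m+1)$st leftward jump after its $m$th stochastically dominates an $\mathrm{Exp}(a_{j_m})$ waiting time for the relevant site index $j_m$, and since particle~$1$ drifts left through sites indexed in $\ZP$ (after re-centring), the accumulated time is bounded below by a sum $\sum 1/a_{k}$-type quantity. The divergence $\sum_{k \in \ZP}(1/a_k) = \infty$ then forces $\zeta = \infty$ a.s.

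The main obstacle I anticipate is making the second step rigorous: one must carefully bound how many jumps the infinitely many particles to the right can collectively perform in bounded time, given that only finitely many holes mediate their motion. The clean way is to set up a coupling or a monotone comparison showing that the process observed up to the $n$th global jump can be dominated, in terms of the elapsed time, by a process in which only particle~$1$ and the finitely many holes matter; equivalently, one argues that if $\zeta < \infty$ with positive probability then particle~$1$ makes infinitely many leftward jumps in finite time, but the waiting times for those jumps, being independent $\mathrm{Exp}(a_{k})$ variables over distinct site indices $k \to \infty$ in $\ZP$, have infinite sum by~\eqref{ass:a-inverse-sum} — contradiction by a Borel--Cantelli / three-series argument. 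Finally, uniqueness on the countable state space follows because the minimal process is the unique honest (non-explosive) process with the prescribed $q$-matrix once non-explosion is established; I would cite the standard theory (e.g.\ Norris or Anderson) for this last step rather than reprove it.
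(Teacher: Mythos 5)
Your overall framework — build the minimal process from jump chain and holding times, prove non-explosion, and cite the standard uniqueness result for honest $q$-matrices — matches the paper's approach, and the observation that each $x \in \bbXB$ has finite total enabled rate is correct. But the non-explosion argument itself rests on two false claims, both stemming from a misreading of how the rates are indexed. The rates $a_k, b_k$ are attached to the $k$th \emph{particle}, not to lattice sites: particle~$1$ always attempts to jump left at the constant rate $a_1$, regardless of its position. So there is no ``$\mathrm{Exp}(a_{j_m})$ for the relevant site index $j_m$'', the times between particle~$1$'s leftward jumps are i.i.d.\ $\mathrm{Exp}(a_1)$, and their sum does not blow up in any way controlled by hypothesis~\eqref{ass:a-inverse-sum}. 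Moreover, the claim that the total number of jumps the system can make while particle~$1$ makes only $m$ leftward jumps is bounded by a deterministic function of $m$ and the initial hole count $H$ is false: with, say, $X(0) = (0,1,3,4,5,\ldots)$, particle~$3$ can shuffle left and right across the single hole indefinitely with $m=0$. If your bottleneck argument via particle~$1$ were valid, non-explosion would follow without~\eqref{ass:a-inverse-sum} at all, which signals that the mechanism must lie elsewhere.

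The real danger is rightward propagation of holes to queues ``at infinity'' in finite time: once queue $k-1$ holds a customer, a customer can be routed to queue $k$ (particle $k+1$ jumping left) at rate $a_k$, so if $a_k \to \infty$ fast enough the occupied region can reach infinity in finite time. The paper's proof formalizes this by partitioning $\bbXB$ according to the index of the rightmost occupied queue, taking $\sigma_k$ to be the first hitting time of the stratum where queue $k-1$ is the rightmost occupied one, and showing that $\sigma_{k+1} - \sigma_k$ stochastically dominates an independent $\mathrm{Exp}(a_k)$ variable (since queue $k$ can only receive its first customer at rate at most $a_k$). The divergence $\sum_k 1/a_k = \infty$ then makes $\sum_k \mathrm{Exp}(a_k) = \infty$ a.s., hence $\sigma_\infty = \infty$ a.s. Your instinct to compare against a pure birth process is exactly the right tool, but it must be applied to the advancing wavefront of occupied queues, not to the leftward excursions of particle~$1$.
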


 For initial conditions beyond $\bbXB$, we will assume the following:
\begin{description}
\item\namedlabel{ass:bounded-rates}{$\text{A}_2$} 
Suppose that $\sup_{k \in \N} a_k < \infty$ and $\sup_{k \in \N} b_k < \infty$.
\end{description}
Note that~\eqref{ass:bounded-rates} implies~\eqref{ass:a-inverse-sum},
but~\eqref{ass:a-inverse-sum} also permits $a_k \to \infty$ (for example $a_k = O(k)$).

 We will show (Proposition~\ref{prop:invariant-measure} below)
 that, assuming~\eqref{ass:bounded-rates}, one can construct
 $X(t)$ started from arbitrary $X(0) \in \bbX$,
 and, moreover, initial distributions that have a relative invariance property,
meaning that the distribution of the collection of inter-particle separations is stationary, while the whole system possesses a characteristic \emph{speed}. The stationary distribution for the inter-particle separations 
can be viewed as  an invariant measure (of which there may be many) 
of an \emph{infinite Jackson network}
 corresponding to the particle system. The Jackson network is helpful for both the construction
 of the process and its analysis. The Jackson connection, explained in detail in
 Section~\ref{sec:jackson}, is via the process of inter-particle separations; we introduce
 relevant notation next.
For $x = (x_1, x_2, \ldots) \in \bbX$,
 define $D_k : \bbX \to \ZP $, by 
\begin{equation}
    \label{eq:D-def}
D_k (x) := x_{k+1} -x_k -1, \text{ for } k \in \N.
\end{equation}
Set $\bbD := \ZP^\N$, and define $D : \bbX \to \bbD$
by $(D(x))_k = D_k (x)$. An example
is $D (\xheaviside) = 0 \in \bbD$. 
We define the process $\eta := ( \eta(t))_{t \in \RP}$ on $\bbD$ by $\eta (t) := D ( X(t) )$, $t \in \RP$. In other words, 
\begin{equation}
\label{eq:eta-def}
\eta_k (t) := X_{k+1}(t)-X_k(t)-1, 
\text{ for } k \in \N,
\end{equation}
denotes  the number of  {holes} 
between particles $k$ and $k+1$ at time~$t \in \RP$. An equivalent description of the system is captured by 
the
process
$\xi := ( \xi (t) )_{t \in \RP}$, where 
\begin{equation}
\label{eq:xi-def}
\xi (t) := (X_1 (t), \eta_1(t), \eta_2 (t), \ldots  ) \in \Z \times \ZP^\N .\end{equation}
It turns out that, under appropriate conditions,  $\eta = (\eta (t))_{t \in \RP} \in \bbD$, 
with coordinates given by~\eqref{eq:eta-def},  
is \emph{also} a continuous-time Markov process, which can  be represented via an \emph{infinite Jackson network} of queues of M/M/1 type, or, alternatively, as a \emph{zero range process} (we explain this is Section~\ref{sec:jackson}). 

Before describing our main results, let us 
indicate some well-known examples. Suppose that $X(0) \in \bbXB$,
so that existence is a consequence of
 Proposition~\ref{prop:existence-from-heaviside}. 
Since $\bbXB$ is countable, we can in this case understand the process $X(t)$ started from
an initial state in $\bbXB$ 
as a continuous-time Markov chain on a countable state space; under~\eqref{ass:positive-rates}
the Markov chain is irreducible on a subset of $\bbXB$, and so the standard notions of transience, recurrence, positive recurrence, and so on, apply. 
Since configurations in $\bbXB$ are ultimately close-packed to the right,
one has a uniform upper bound on $X_1 (t)$ (depending only on $X_1(0)$). Hence transience is equivalent to $\lim_{t \to \infty} X_1 (t) = -\infty$, a.s., and recurrence to $X_1 (t) = X_1 (0)$
for an a.s.-unbounded set of times $t$, for example.

\begin{example}[Homogeneous simple exclusion]
\label{ex:simple-exclusion}
Consider homogeneous rates $a_k \equiv a \in (0,\infty)$, $b_k \equiv b \in (0,\infty)$ for all $k \in \N$.
If $a = b$ this is  the \emph{symmetric simple exclusion process} (SSEP)
and if $a \neq b$ it is the  \emph{asymmetric simple exclusion process} (ASEP).
A classical result is that the Markov chain $X(t)$ on $\bbXB$
(with initial configuration $X(0) \in \bbXB$)
is positive recurrent  if $a <b$,
while if $a > b$ it is transient. 
As an example of our general results (see Example~\ref{ex:homogeneous-first} below),
we will see that for $a <b$,
$\eta(t)$ converges to a product-geometric stationary distribution,
while $\lim_{t \to \infty} t^{-1} X_k (t) =0$, a.s., for every $k \in \N$.
In the case $a=b$,
 a result of Arratia, Theorem~2 of~\cite[p.~386]{arratia83},
 says that 
 \begin{equation}
     \label{eq:arratia}
\lim_{t\to\infty}  \frac{X_1 (t)}{\sqrt{ t \log t}} = 1, \as
  \end{equation}
Arratia's result implies transience; an alternative approach to transience, which also treats some mixture models, can be found in 
Belitsky~\emph{et al.}~\cite{BFMP01}. 
\end{example}

In the next section we present our main results. 
We will return to various aspects of Example~\ref{ex:simple-exclusion},
and its generalizations, for illustration.
 
\subsection{Stability, strong law, and examples}
\label{sec:main-results}

In this section we present our main results, the necessary notation, and some examples;
proofs come later in the paper (see Section~\ref{sec:outline} for an outline). 
A central place in our analysis is occupied by the following linear system whose coefficients are the rate parameters $(a_i, b_i )_{i\in\N}$:
\begin{equation}
\label{eq:stable-traffic}
 (b_i +a_{i+1})\rho_i = a_i \rho_{i-1}+b_{i+1}\rho_{i+1}, \text{ for } i \in \N; ~ \rho_0 = 1.
\end{equation}
We call the system~\eqref{eq:stable-traffic} (always imposing the initial condition~$\rho_0 = 1$)
the \emph{stable traffic equation},
by analogy with the finite Jackson network setting (see~\cite{mmpw}), although in the infinite case the picture is richer.
\begin{lemma}[Solutions to the stable traffic equation]
\label{lem:solution-space}
Suppose that~\eqref{ass:positive-rates} holds. 
The set of solutions~$\rho = (\rho_k)_{k\in\ZP}$ to~\eqref{eq:stable-traffic} is the one-parameter family
\begin{equation}
\label{eq:rho-explicit}
 \rho = \alpha + v \beta
\end{equation}
for~$v\in \R$, where $\alpha = (\alpha_k)_{k \in \ZP}$
and $\beta = (\beta_k)_{k \in \ZP}$ are defined by 
\begin{align}
\label{eq:alpha-k-def}
 \alpha_0 & := 1, ~\text{and}~ \alpha_k := \frac{a_1\cdots a_k}{b_1\cdots b_k} \text{ for } k \in \N; \\
\label{eq:beta-k-def}
\beta_0 & := 0 ,  ~\text{and}~ \beta_k := \frac{1}{b_k}+\frac{a_k}{b_k b_{k-1}}+
  \cdots + \frac{a_k\cdots a_2}{b_k\cdots b_1}
   \text{ for }k\in \N.
\end{align}
Moreover, 
for every solution $\rho = \alpha + v \beta$, $v \in \R$,
\begin{equation}
\label{eq:v-difference}
v = b_{k+1}\rho_{k+1}-a_{k+1}\rho_k , \text{ for every } k \in \ZP.
\end{equation}
\end{lemma}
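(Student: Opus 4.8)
The plan is to observe that the second-order recursion~\eqref{eq:stable-traffic} telescopes into a first-order one. For $k \in \ZP$ set $v_k := b_{k+1}\rho_{k+1} - a_{k+1}\rho_k$. Rearranging the $i$th equation of~\eqref{eq:stable-traffic} as $b_{i+1}\rho_{i+1} - a_{i+1}\rho_i = b_i\rho_i - a_i\rho_{i-1}$ says precisely that $v_i = v_{i-1}$ for every $i \in \N$; hence $v_k$ is independent of $k$, and writing $v := v_0 = b_1\rho_1 - a_1$ gives~\eqref{eq:v-difference}. Conversely, any sequence with $b_{k+1}\rho_{k+1} - a_{k+1}\rho_k = v$ for all $k \in \ZP$ and some constant $v$ satisfies~\eqref{eq:stable-traffic}. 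Thus~\eqref{eq:stable-traffic} is equivalent to the first-order system $b_{k+1}\rho_{k+1} = v + a_{k+1}\rho_k$ ($k \in \ZP$) with $\rho_0 = 1$, for some $v \in \R$.

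Under~\eqref{ass:positive-rates} each $b_{k+1}$ is nonzero, so for every fixed $v \in \R$ this first-order system has a unique solution, obtained by forward substitution $\rho_{k+1} = (v + a_{k+1}\rho_k)/b_{k+1}$ from $\rho_0 = 1$; this already exhibits the solution set as a one-parameter family indexed by $v \in \R$. To pin down the explicit form I would invoke linearity: the map $v \mapsto \rho$ just described is affine, so if $\rho^{(0)}, \rho^{(1)}$ denote the solutions for $v = 0$ and $v = 1$ then every solution is $\rho = \rho^{(0)} + v(\rho^{(1)} - \rho^{(0)})$. Putting $\alpha := \rho^{(0)}$ and $\beta := \rho^{(1)} - \rho^{(0)}$, one has $\rho = \alpha + v\beta$, where $\alpha$ solves $b_{k+1}\alpha_{k+1} = a_{k+1}\alpha_k$ with $\alpha_0 = 1$ and $\beta$ solves $b_{k+1}\beta_{k+1} = 1 + a_{k+1}\beta_k$ with $\beta_0 = 0$; note $\rho_0 = \alpha_0 + v\beta_0 = 1$ automatically.

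Finally I would confirm the closed forms~\eqref{eq:alpha-k-def} and~\eqref{eq:beta-k-def} by induction on $k$: the relation $\alpha_{k+1} = (a_{k+1}/b_{k+1})\alpha_k$ telescopes to the claimed product, and the relation $\beta_{k+1} = (1 + a_{k+1}\beta_k)/b_{k+1}$ combined with the inductive hypothesis for $\beta_k$ yields the stated finite sum after one line of algebra. Since $\beta_1 = 1/b_1 \neq 0$, distinct $v$ give distinct sequences, so the parametrisation in~\eqref{eq:rho-explicit} is faithful. I do not expect a real obstacle here; the only points requiring care are the index bookkeeping in the telescoping step (reading the equation at $i \in \N$ as $v_i = v_{i-1}$ with $i-1 \in \ZP$) and noting that~\eqref{ass:positive-rates} is exactly what makes the forward recursion well defined and forces $\beta \not\equiv 0$.
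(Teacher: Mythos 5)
Your proof is correct and, while it arrives at the same objects ($\alpha$, $\beta$, and the parameter $v$), it reorganizes the argument around a different central observation. The paper's proof asserts that the solution space of the unconstrained system~\eqref{eq:stable-traffic} is two-dimensional, exhibits $\alpha$ and $\beta$ by formula, checks directly that each satisfies the second-order recursion (citing gambler's ruin), and only afterwards derives~\eqref{eq:v-difference} by an auxiliary computation using~\eqref{eq:beta-alpha-recursions}. You instead notice at the outset that the $i$th equation is literally the telescoping identity $b_{i+1}\rho_{i+1}-a_{i+1}\rho_i = b_i\rho_i - a_i\rho_{i-1}$, so that~\eqref{eq:v-difference} is the \emph{defining} structural fact: the second-order system is equivalent to the first-order recursion $b_{k+1}\rho_{k+1} = v + a_{k+1}\rho_k$ with $\rho_0=1$ for some constant $v$. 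From there the one-parameter family, the decomposition $\rho=\alpha+v\beta$, and the closed forms all fall out by forward substitution, linearity, and a short induction. What your route buys is that~\eqref{eq:v-difference} comes for free rather than as an afterthought, and you avoid the paper's ``it is straightforward to check'' step and the appeal to the gambler's-ruin reference; the cost is that the two-dimensionality of the full (unconstrained) solution space is left implicit. Both are sound; yours is arguably the more self-contained presentation.
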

In the finite case~\cite{mmpw}, there is another boundary condition, so there is always a unique solution to the corresponding
traffic equation; the key result is that the
collection of inter-particle separations (which correspond to a Jackson queueing network, as described in Section~\ref{sec:jackson})
is stable if and only if that unique solution
satisfies $\rho_k <1$ for all $k \geq 1$. 

Our main interest is in the case where the particles in our semi-infinite
system constitute a single ``stable cloud'', and this
motivates considering the following subclass of solutions to the infinite
stable traffic equation~\eqref{eq:stable-traffic}.

\begin{definition}[Admissible solutions]
\label{def:admissible}
We say $\rho = (\rho_k)_{k\in\ZP}$ is an \emph{admissible} solution (to the stable traffic equation) if~$\rho$ satisfies~\eqref{eq:stable-traffic}, $\rho_0 = 1$, 
and $\rho_k \in (0,1)$ for all $k \in \N$. Let
\begin{equation}
\label{eq:admissible-speeds}
\cV := \{ v \in \R : \alpha + v \beta \text{ is an admissible solution to~\eqref{eq:stable-traffic}} \} .\end{equation}
\end{definition}
The  focus of the present paper is when~$\cV$ 
is non-empty, and when a particular member of~$\cV$ governs
the asymptotic dynamics of the particle system, exhibiting the
behaviour of a single ``stable cloud''. A complication
here (absent in the case of finite systems~\cite{mmpw}) is that
there may be multiple members of~$\cV$, and which one is relevant
for the dynamics can depend on the initial configuration, particularly once we permit initial configurations beyond~$\bbXB$.

\begin{remark}[Multiple clouds]
    \label{rem:non-admissible}
    A more general situation than we consider in the present paper is when the system can be decomposed into a single, semi-infinite stable cloud, plus a finite number of  finite stable clouds (to the left, necessarily). Such systems are described by solutions~$\rho$ to~\eqref{eq:stable-traffic} for which $\rho_k \geq 1$ for at least one $k \in \N$. We anticipate that a combination of the approach here and in~\cite{mmpw} (where finite systems are studied) can in this case be used to characterize the cloud decomposition in terms of~$\rho$ and to show that each (finite or infinite) cloud will satisfy a strong law of large numbers with its own intrinsic speed, ordered left-to-right.
    The situation with an infinite number of finite clouds   is also possible.
\end{remark}

Write $\geo{q}$ for the (shifted) geometric distribution on $\ZP$ with success parameter~$q \in (0,1]$, i.e., $\zeta \sim \geo{q}$ means that $\IP ( \zeta = n ) = (1-q)^n q$ for $n \in \ZP$. For an admissible solution~$\rho$ to~\eqref{eq:stable-traffic}, let $\nu_\rho$ be the product measure $\bigotimes_{k \in \N} \geo{ 1-\rho_k}$ on $\bbD = \ZP^\N$, i.e.,
\begin{equation}
\label{eq:df_measure_rho}
\nu_\rho ( m )
   = \prod_{k\in A} (1-\rho_k) \rho_k^{m_k}, \text{ for all  finite } A\subset \N \text{ and all } m = (m_k)_{k \in A}  \in \ZP^A.
\end{equation}

The following statement 
(based on results of~\cite{FF94}) shows that the $v \in \cV$ index invariant measures for the $\eta(t)$ component of the process $\xi(t)$ (i.e., the system seen from the leftmost particle),
with $v$ corresponding to the \emph{speed} of every particle in the particle cloud
when the system runs under stationary
law $\eta(t) \sim \nu_\rho$.  Since $\nu_\rho$ may be supported
on configurations with infinite total number of holes, 
existence of the process started from $\nu_\rho$ is, in general, not covered by Proposition~\ref{prop:existence-from-heaviside},
and the following statement provides existence under the 
bounded-rates condition~\eqref{ass:bounded-rates}. 

\begin{proposition}[Existence and invariant measures]
\label{prop:invariant-measure}
Suppose that~\eqref{ass:positive-rates} and~\eqref{ass:bounded-rates} hold. 
Then there exists a
Markov process $(\xi(t))_{t \in \RP}$ on $\bbX$ (defined for all time) with arbitrary 
 initial state $\xi(0) \in \bbX$, 
 possessing right-continuous sample paths, and dynamics as described 
 in Section~\ref{sec:model} above.
 
Moreover, 
suppose that~$v \in \cV$
with~$\rho = \alpha + v \beta$  the  corresponding
 admissible solution of~\eqref{eq:stable-traffic}. Then 
 if $\eta(0) \sim \nu_\rho$, for $\nu_\rho$ given by~\eqref{eq:df_measure_rho}, 
 and $\xi (0) = (X_1(0), \eta(0))$ for fixed $X_1(0) \in \Z$, it holds that
$\eta(t) \sim \nu_\rho$
 for all $t \in \RP$ (invariance), 
and 
\begin{equation}
\label{eq:invariant-speed}
\lim_{t \to \infty} \frac{X_1(t)}{t} = v , \as \end{equation}
\end{proposition}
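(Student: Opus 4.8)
The plan is to treat the two assertions of Proposition~\ref{prop:invariant-measure}---existence of the process from arbitrary $\xi(0)\in\bbX$, and the invariance/speed statement under $\nu_\rho$---largely separately, deferring the construction to the Jackson-network machinery of Section~\ref{sec:jackson} and focusing the probabilistic work on the invariance claim and the strong law~\eqref{eq:invariant-speed}.

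\emph{Existence.} Under~\eqref{ass:bounded-rates} all rates are uniformly bounded, so I would build $(\xi(t))_{t\in\RP}$ by a standard graphical (Harris) construction: attach to each ordered pair of neighbouring sites an independent Poisson process of rate $\sup_k a_k$ (resp.\ $\sup_k b_k$) and thin it to realize the particle-dependent rates $a_k,b_k$ when the clock attached to the $k$th particle's current position rings. Because jump attempts are nearest-neighbour and rates are bounded, a percolation/finite-speed-of-propagation argument shows that, for each fixed $t$ and each particle label $k$, the trajectory $(\xi_j(s))_{s\le t,\, j\le k}$ depends only on finitely many Poisson marks almost surely, so the process is well-defined pathwise on $[0,\infty)$ with right-continuous paths. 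The exclusion rule and nearest-neighbour jumps preserve the order $x_1<x_2<\cdots$, so the state stays in $\bbX$. (Alternatively, one realizes $\eta(t)$ as an infinite zero-range process / infinite Jackson network and transfers existence back to $\xi$; either way the uniform bound on rates is what makes this routine.)

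\emph{Invariance of $\nu_\rho$.} The key identity is that, for an admissible $\rho=\alpha+v\beta$, the number $v=b_{k+1}\rho_{k+1}-a_{k+1}\rho_k$ from~\eqref{eq:v-difference} is \emph{independent of $k$}; this is exactly the ``balanced traffic'' condition that makes the product-geometric measure reversible-like for the associated M/M/1 Jackson network. Concretely, $\eta(t)$ evolves as a zero-range process in which, at site $i\in\N$, a hole moves right (particle $i$ jumps left) at rate $a_{i+1}\1{\eta_i>0}$ and a hole moves left at rate $b_i\1{\eta_{i-1}>0}$ type terms, together with creation/annihilation at the boundary driven by the leftmost particle; writing out the generator $\cL$ acting on local functions and checking $\int \cL f\,\ud\nu_\rho=0$ reduces, after the usual telescoping, to precisely the stable traffic equation~\eqref{eq:stable-traffic} together with the constant-flux condition~\eqref{eq:v-difference}. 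This is the content imported from~\cite{FF94}: their results give that $\nu_\rho$ is invariant for the infinite network exactly when the $\rho_k\in(0,1)$ solve the traffic equations with a common flux. So I would (i) make the zero-range/Jackson identification precise, (ii) cite the relevant invariance theorem of~\cite{FF94} for product measures on infinite networks, and (iii) verify that admissibility of $\rho$ supplies exactly the hypotheses needed (the $(0,1)$ bounds give a genuine probability measure and geometric ergodicity at each node; boundedness of rates controls the tail).

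\emph{Strong law for $X_1$.} Under $\eta(0)\sim\nu_\rho$ the process is stationary in the $\eta$-coordinate, and $X_1(t)=X_1(0)-N_L(t)+N_R(t)$, where $N_L(t)$ (resp.\ $N_R(t)$) counts executed left (resp.\ right) jumps of particle $1$ up to time $t$. A left jump of particle~1 is never blocked (nothing to its left), so $N_L(t)$ is a Poisson process of rate $a_1$; a right jump is executed at rate $b_1\1{\eta_1(t)>0}$, so $M(t):=N_R(t)-\int_0^t b_1\1{\eta_1(s)>0}\,\ud s$ is a martingale. Since $\eta$ is stationary, $\Exp[\1{\eta_1(s)>0}]=\rho_1$ for all $s$, whence $\Exp[X_1(t)]-X_1(0)=-(a_1-b_1\rho_1)t=-\,(-v)\,t$ using $v=b_1\rho_1-a_1\rho_0=b_1\rho_1-a_1$ from~\eqref{eq:v-difference} with $k=0$. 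To upgrade to an almost-sure limit I would invoke ergodicity of the stationary process $(\eta(t))$ under $\nu_\rho$ (product measures for such zero-range dynamics are ergodic---again via~\cite{FF94} or a standard coupling/tail-triviality argument), and apply the ergodic theorem to the time-average $\frac1t\int_0^t \1{\eta_1(s)>0}\,\ud s\to \rho_1$ a.s., while a strong law of large numbers for the martingale $M(t)$ (its quadratic variation grows linearly, being dominated by a rate-$b_1$ Poisson process) gives $M(t)/t\to0$ a.s. Combining, $X_1(t)/t\to b_1\rho_1-a_1=v$ a.s., which is~\eqref{eq:invariant-speed}.

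\emph{Main obstacle.} The delicate point is not the algebra but justifying the two ``soft'' inputs at the level of the \emph{infinite} system: (a) that $\nu_\rho$ is genuinely invariant (not merely formally annihilated by $\cL$ on a core) when $\nu_\rho$ charges configurations with infinitely many holes, and (b) ergodicity of the stationary process, which is what converts convergence in mean into the a.s.\ statement~\eqref{eq:invariant-speed}. Both require care because the generator is unbounded in the occupation variables even though the rates $a_k,b_k$ are bounded; the resolution is to lean on the infinite-Jackson-network results of~\cite{FF94}, checking that condition~\eqref{ass:bounded-rates} and admissibility ($\rho_k<1$, bounded away from $1$ on any finite set) place us within their hypotheses, and to use the graphical construction to get the coupling needed for tail-triviality. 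I expect this to be the step that consumes most of the proof, with the strong law for $X_1$ then following quickly.
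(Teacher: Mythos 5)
Your existence argument (Harris graphical construction, finite speed of propagation from bounded rates) matches the paper's, and your invariance plan is a somewhat looser version of what the paper does via~\cite{FF94}. The one substantive point you omit there is that the route through~\cite{FF94} is not merely ``cite an invariance theorem for product measures'': the paper's verification hinges on checking that the \emph{reverse} process $\teta$ is itself a well-defined process, i.e.\ that its transition rates $a_k\rho_{k-1}/\rho_k$ and $b_{k+1}\rho_{k+1}/\rho_k$ are uniformly bounded. This is what prevents the formal generator identity $\int\cL f\,d\nu_\rho=0$ from being only a statement about a core, and it requires a case split (according to whether $\liminf_k\rho_k>0$ or $=0$) together with Proposition~\ref{prop:admissible-solutions}\ref{prop:admissible-solutions-vi}. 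You should make this explicit rather than leave it under ``lean on~\cite{FF94}''.

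The real divergence, and the real gap, is in your treatment of~\eqref{eq:invariant-speed}. Your martingale decomposition $N_R(t)-\int_0^t b_1\1{\eta_1(s)>0}\,\ud s$ is fine, and the martingale SLLN is unproblematic, but the whole argument then rests on $\frac{1}{t}\int_0^t\1{\eta_1(s)>0}\,\ud s\to\rho_1$ a.s. You propose to get this from Birkhoff plus ergodicity of the time-stationary process started from $\nu_\rho$. That ergodicity is precisely the statement that $\nu_\rho$ is extremal among stationary measures of the infinite system, and here the situation is delicate: the paper shows that $\cV$ may contain a whole interval of values, each supplying a distinct product-geometric stationary measure, and nothing in the hypotheses rules out \emph{a priori} that $\nu_\rho$ is a mixture of other invariant measures, which would make the Birkhoff limit random. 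Your proposed fix (``coupling/tail-triviality'') is, if unpacked, essentially a convergence-to-equilibrium statement for the infinite system, which is one of the paper's later theorems and is certainly not available as a free lemma at this stage. The paper sidesteps the entire issue: once $\teta$ is constructed, time-reversal identifies the departure process $E^{\leftarrow 1}$ of $\eta$ with the (exogenous) arrival process of $\teta$, which is a \emph{Poisson process} of rate $b_1\rho_1$ by the very construction of $\teta$. The strong law for Poisson processes then gives $\lim_t t^{-1}E^{\leftarrow 1}(t)=b_1\rho_1$ and $\lim_t t^{-1}E^{\rightarrow 1}(t)=a_1$ almost surely, with no ergodicity input at all, and~\eqref{eq:X1-via-counting} together with~\eqref{eq:v-difference} finishes. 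I would either adopt that time-reversal argument, or, if you want to keep the martingale route, supply a genuine proof that $\nu_\rho$ is extremal; the latter looks substantially harder than what it replaces.
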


When $\# \cV > 1$, Proposition~\ref{prop:invariant-measure} shows that there are multiple invariant measures associated with the particle system dynamics. The next result
identifies a minimal member of~$\cV$ of particular importance for our purposes.
Recall $\alpha_k, \beta_k$ defined at~\eqref{eq:alpha-k-def} and~\eqref{eq:beta-k-def}.

\begin{proposition}
\label{prop:admissible-solutions-first}
    Suppose that~\eqref{ass:positive-rates} holds. Then there always exists the limit 
\begin{equation}
\label{eq:v0-def}
v_0 := - \lim_{k\to\infty} \frac{\alpha_k}{\beta_k}
 = -\Big(\frac{1}{a_1}+\frac{b_1}{a_1a_2}
  + \frac{b_1b_2}{a_1a_2a_3}+ \cdots\Big)^{-1} \in (-a_1,0].
\end{equation}
If $\cV \neq 0$, then  $\inf \cV = v_0 \in \cV$.
Moreover,
 it holds that 
$v_0 \in \cV$ if and only if, for all $k \in \N$,
\begin{equation}
\label{eq:v0-admissible}
v_0 < \frac{b_1 \cdots b_k - a_1 \cdots a_k}{b_2 \cdots b_k + a_1 b_3 \cdots b_k + \cdots + a_1 \cdots a_{k-1}} ,
\end{equation}
where each of the $k$ terms in the denominator is a product of $k-1$ factors. 
\end{proposition}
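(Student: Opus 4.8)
The plan is to analyse the structure of the solution family $\rho = \alpha + v\beta$ from Lemma~\ref{lem:solution-space} as a function of $v$, and to show that the admissibility constraints $\rho_k \in (0,1)$ translate into a collection of linear inequalities in $v$ whose extreme case is exactly~\eqref{eq:v0-admissible}. First I would observe that, since $\beta_k > 0$ for all $k \in \N$ under~\eqref{ass:positive-rates} (each $\beta_k$ is a sum of positive terms), the map $v \mapsto \rho_k = \alpha_k + v\beta_k$ is strictly increasing in $v$ for each fixed $k \geq 1$. Hence the condition $\rho_k > 0$ is equivalent to $v > -\alpha_k/\beta_k$, and the condition $\rho_k < 1$ is equivalent to $v < (1-\alpha_k)/\beta_k$. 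Therefore $v \in \cV$ if and only if
\[
 \sup_{k \in \N} \Bigl( - \frac{\alpha_k}{\beta_k} \Bigr) < v < \inf_{k \in \N} \frac{1-\alpha_k}{\beta_k} .
\]
The first step, then, is to identify the left supremum. I would show that the sequence $-\alpha_k/\beta_k$ is (weakly) increasing in $k$: using the recursions $\alpha_k = (a_k/b_k)\alpha_{k-1}$ and $\beta_k = (1/b_k) + (a_k/b_k)\beta_{k-1}$, one computes $\alpha_k\beta_{k-1} - \alpha_{k-1}\beta_k = -\alpha_{k-1}/b_k < 0$, which gives $\alpha_k/\beta_k \leq \alpha_{k-1}/\beta_{k-1}$, i.e.\ $-\alpha_k/\beta_k$ is non-decreasing. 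Consequently the supremum is the limit $v_0 = -\lim_{k\to\infty}\alpha_k/\beta_k$, which exists (possibly $0$); the explicit series form in~\eqref{eq:v0-def} follows from $\alpha_k/\beta_k = 1/(b_k\beta_k/\alpha_k)$ and expanding $\beta_k/\alpha_k = 1/a_k + b_1/(a_1 a_2)\cdot(\text{tail})$—more precisely $\beta_k/\alpha_k = \sum_{j=1}^{k} \frac{b_1\cdots b_{j-1}}{a_1 \cdots a_j}$, a non-decreasing sequence of partial sums, so $v_0 \in (-a_1, 0]$ as claimed (the bound $v_0 > -a_1$ comes from the first term $1/a_1$ of the series, and $v_0 \leq 0$ since all terms are positive).

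Next I would prove the three claims about $\cV$. For ``$\inf \cV = v_0$'': since $v \in \cV$ forces $v > -\alpha_k/\beta_k$ for all $k$ and hence $v \geq v_0$, we get $\inf\cV \geq v_0$; and if $\cV \neq \emptyset$, pick any $v_1 \in \cV$, so $v_1 < \inf_k (1-\alpha_k)/\beta_k =: v^*$ and $v_0 \leq v_1 < v^*$. For ``$v_0 \in \cV$ when $\cV \neq \emptyset$'': it remains to check $v_0 < v^*$, i.e.\ $v_0 < (1-\alpha_k)/\beta_k$ for every $k$. Given $v_1 \in \cV$ we have $v_1 < (1-\alpha_k)/\beta_k$; the issue is to upgrade the strict inequality from $v_1$ to the possibly-smaller $v_0$, which is immediate since $v_0 \leq v_1 < (1-\alpha_k)/\beta_k$. (If $v_0 < v_1$ this is clear; if $v_0 = v_1$ then $v_0 = v_1 \in \cV$ directly. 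Either way $v_0 \in \cV$, and then $\inf\cV = v_0$ since $v_0$ is both a lower bound and a member.) Finally, the ``if and only if'' for $v_0 \in \cV$ in general: $v_0 \in \cV$ iff $v_0 > -\alpha_k/\beta_k$ for all $k$ (automatic, since $v_0 = \sup_k(-\alpha_k/\beta_k)$, but one must be slightly careful—equality $v_0 = -\alpha_k/\beta_k$ cannot occur for finite $k$ because the sequence is strictly... actually it may be only weakly increasing, so I should check: $\alpha_k\beta_{k-1} - \alpha_{k-1}\beta_k = -\alpha_{k-1}/b_k < 0$ strictly, so $-\alpha_k/\beta_k > -\alpha_{k-1}/\beta_{k-1}$ strictly, hence $-\alpha_k/\beta_k < v_0$ strictly for every finite $k$, giving $\rho_k > 0$ automatically) AND $v_0 < (1-\alpha_k)/\beta_k$ for all $k$. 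So $v_0 \in \cV$ iff the latter family of inequalities holds. It then remains to rewrite $v_0 < (1-\alpha_k)/\beta_k$ in the form~\eqref{eq:v0-admissible}: multiply through by $\beta_k > 0$, substitute $\alpha_k = (a_1\cdots a_k)/(b_1\cdots b_k)$ and $\beta_k = \frac{1}{b_k} + \frac{a_k}{b_k b_{k-1}} + \cdots + \frac{a_k\cdots a_2}{b_k \cdots b_1}$, and clear the common denominator $b_1\cdots b_k$; the numerator of $(1-\alpha_k)$ becomes $b_1\cdots b_k - a_1\cdots a_k$ and $\beta_k \cdot b_1\cdots b_k = b_1\cdots b_{k-1} + a_k b_1 \cdots b_{k-2} + \cdots$—here I should double-check the indexing so that it matches the stated denominator $b_2\cdots b_k + a_1 b_3\cdots b_k + \cdots + a_1\cdots a_{k-1}$, which suggests the common factor pulled out is $b_1$ rather than $b_k$; in any case it is a finite rearrangement of products, each with $k-1$ factors, and one reads off~\eqref{eq:v0-admissible}.

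The main obstacle I anticipate is purely bookkeeping: getting the algebraic identity between $\beta_k(1-\alpha_k)^{-1}$-type expressions and the precise denominator displayed in~\eqref{eq:v0-admissible} right, including the correct pattern of which $a$'s and $b$'s appear in each of the $k$ terms. The conceptual content—monotonicity of $-\alpha_k/\beta_k$ via the sign of the ``Wronskian'' $\alpha_k\beta_{k-1} - \alpha_{k-1}\beta_k$, and the reduction of admissibility to a two-sided bound on $v$—is short. I would also need to confirm that the series in~\eqref{eq:v0-def} is indeed $\lim_k \beta_k/\alpha_k$ (equivalently that $\beta_k/\alpha_k = \sum_{j=1}^k \frac{b_1\cdots b_{j-1}}{a_1\cdots a_j}$, provable by induction using the two recursions), which pins down both the closed form and the bracket $(-a_1, 0]$.
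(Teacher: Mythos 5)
Your proof is correct and takes essentially the same route as the paper: strict monotonicity of $\alpha_k/\beta_k$ via the Wronskian computation $\alpha_k\beta_{k-1}-\alpha_{k-1}\beta_k=-\alpha_{k-1}/b_k$ (the paper phrases this via the equivalent recursion $\beta_{k+1}/\alpha_{k+1}=\beta_k/\alpha_k + 1/(a_{k+1}\alpha_k)$), hence the monotone limit $v_0$; the series $\beta_k/\alpha_k=\sum_{j=1}^k b_1\cdots b_{j-1}/(a_1\cdots a_j)$ giving the closed form and the bracket $(-a_1,0]$; and, because $-\alpha_k/\beta_k<v_0$ strictly for every finite $k$, the positivity constraint $\rho_k>0$ is automatic at $v=v_0$, so $v_0\in\cV$ reduces to $v_0<(1-\alpha_k)/\beta_k$ for all $k$, and $\inf\cV=v_0\in\cV$ follows by taking any $v_1\in\cV$ and noting $v_0\le v_1<(1-\alpha_k)/\beta_k$. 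One small wrinkle to tidy: your first display states $v\in\cV$ iff $\sup_k(-\alpha_k/\beta_k)<v<\inf_k(1-\alpha_k)/\beta_k$; since the supremum is not attained, the left inequality should be $\ge$, which you in effect acknowledge a few lines later when you observe strictness of the Wronskian fixes positivity at $v=v_0$ automatically.

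On the final bookkeeping: you need not doubt your own computation. Clearing the denominator $b_1\cdots b_k$ in $(1-\alpha_k)/\beta_k$ gives numerator $b_1\cdots b_k-a_1\cdots a_k$ and denominator $\beta_k\, b_1\cdots b_k=b_1\cdots b_{k-1}+a_k\, b_1\cdots b_{k-2}+\cdots+a_2\cdots a_k$, exactly as you wrote, and exactly as in the paper's own~\eqref{eq:v_explicit_variant} (with $N+1=k$). This is \emph{not} a rearrangement of the expression displayed in~\eqref{eq:v0-admissible}, whose denominator $b_2\cdots b_k+a_1 b_3\cdots b_k+\cdots+a_1\cdots a_{k-1}$ is already different for $k=2$ ($b_2+a_1$ versus the correct $b_1+a_2$); the displayed~\eqref{eq:v0-admissible} is an indexing slip in the paper. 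So rather than try to match the stated pattern, state the threshold as $(1-\alpha_k)/\beta_k=(b_1\cdots b_k-a_1\cdots a_k)/(b_1\cdots b_{k-1}+a_k b_1\cdots b_{k-2}+\cdots+a_2\cdots a_k)$, consistent with~\eqref{eq:v_explicit_variant}.
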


In view of Proposition~\ref{prop:admissible-solutions-first}, the following terminology is appropriate.

\begin{definition}[Minimal solution]
\label{def:minimal-solution}
If $\cV \neq \emptyset$, then for $v_0 = \inf \cV$
we say that $\rho = \alpha + v_0 \beta$ is the \emph{minimal} admission solution to~\eqref{eq:stable-traffic} (or just ``minimal solution'', for short).
\end{definition}

It turns out for  initial configurations   in $\bbXB$, the minimal admissible solution (assuming $\cV \neq \emptyset$)
will take special significance. Intuitively, solutions with $v>0$ are not
accessible from a closely-packed configuration, as there is no space to the right for particles to escape, so only solutions with $v \leq 0$ remain. Either $v_0 = 0$, where only the zero-speed solution is accessible,
or else $v_0 <0$, but in that case, under condition~\eqref{ass:a-inverse-sum}, there can be at most one admissible solution (see Proposition~\ref{prop:admissible-solutions}\ref{prop:admissible-solutions-iii} below) which must also be the one corresponding to~$v_0$. A 
manifestation of the special role of~$v_0$ is the following strong law for speeds.

\begin{theorem}[Strong law of large numbers]
\label{thm:strong-law}
Suppose that~\eqref{ass:positive-rates} and~\eqref{ass:a-inverse-sum} hold, $\cV \neq \emptyset$, $X(0) \in \bbXB$,
and either (i)~$v_0 = 0$, or (ii)~$\bbar := \limsup_{k \to \infty} \beta_k = \infty$.
Then, for every $k \in \N$,  
    \begin{equation}
        \lim_{t \to \infty} \frac{X_k(t)}{t} = v_0, \as
    \end{equation}
\end{theorem}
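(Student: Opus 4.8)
The plan is to split the argument along the dichotomy in the hypothesis, handling the leftmost particle first and then bootstrapping to every particle. First I would establish the strong law for $X_1(t)$, i.e.\ $\lim_{t\to\infty} X_1(t)/t = v_0$ a.s. The upper bound is soft: since $X(0)\in\bbXB$ is ultimately close-packed, $X_1(t)$ can never exceed $X_1(0)$ (the particles block each other to the right), and more quantitatively one should get $\limsup_{t\to\infty} X_1(t)/t \le 0$, and in fact $\le v_0$ by comparing with the stationary process of Proposition~\ref{prop:invariant-measure} via a coupling/monotonicity argument (the dynamics are attractive in the standard exclusion sense, so one can dominate $X(0)$ from above by a configuration distributed as $\nu_\rho$ shifted appropriately). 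The lower bound $\liminf_{t\to\infty} X_1(t)/t \ge v_0$ is the substantive half. In case (i), $v_0 = 0$, there is nothing to prove for the liminf since $X_1(t) \le X_1(0)$ forces $X_1(t)/t \to 0$ once we know $X_1(t)/t \to 0$ from above actually gives both bounds; more carefully, in case (i) the upper bound gives $\limsup \le 0$ and we need $\liminf \ge 0$, which should follow from recurrence-type estimates — e.g.\ showing the process restricted to $\bbXB$ cannot drift to $-\infty$ when $v_0 = 0$, perhaps via a Lyapunov/martingale argument using the $\rho$ from the minimal solution (with $v_0=0$, $\rho = \alpha$, and $\sum \prod$-type quantities controlling a supermartingale).

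For case (ii), where $\bbar = \limsup_k \beta_k = \infty$, the idea is different: the divergence of $\beta_k$ means that for the system started in $\bbXB$, the number of holes is, in a suitable averaged sense, recurrent enough that the $\eta$-process started from $D(X(0))$ is dominated by (converges to, or is stochastically below) the stationary measure $\nu_\rho$ with $\rho$ the minimal solution. I would argue that $\eta(t)$ converges in distribution to $\nu_\rho$ (or at least that time-averages of functionals of $\eta$ converge), using the Jackson-network structure promised in Section~\ref{sec:jackson} together with the explicit product form and the characterization of $\cV$; the condition $\bbar = \infty$ rules out the ``escape'' scenario where holes pile up faster than the minimal-speed prediction. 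Granting that, one writes $X_1(t) = X_1(0) - (\text{net left jumps of particle }1)$, and the net rate of left-minus-right motion of particle~$1$, under the (asymptotically) stationary law $\nu_\rho$, is exactly $a_1 - b_1\IP_{\nu_\rho}(\eta_1 = 0) \cdot(\cdots)$, which by~\eqref{eq:v-difference} with $k=0$ equals $-v_0$ (that is the content of the speed identity $v = b_1\rho_1 - a_1\rho_0$ in Lemma~\ref{lem:solution-space}). A law of large numbers for the additive functional counting jumps of particle~$1$ — obtained from ergodicity of $\eta$ under $\nu_\rho$, or from a martingale decomposition of $X_1(t)$ whose bracket grows linearly while the drift is $v_0$ — then yields $X_1(t)/t \to v_0$ a.s.

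Next, to pass from particle~$1$ to particle~$k$: write $X_k(t) = X_1(t) + k - 1 + \sum_{j=1}^{k-1}\eta_j(t)$. It therefore suffices to show $\eta_j(t)/t \to 0$ a.s.\ for each fixed $j$, i.e.\ that the number of holes to the left of any fixed particle does not grow linearly. Under $\nu_\rho$ each $\eta_j$ is tight (geometric), and for general $X(0)\in\bbXB$ one has $\eta_j(t)\le (\text{something that starts finite and is stochastically controlled})$; the cleanest route is a coupling showing $\eta_j(t)$ is stochastically dominated by its stationary version plus a transient correction, so $\eta_j(t) = o(t)$, indeed $\eta_j(t)/\log t$-type bounds, but $o(t)$ is all we need. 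Combining, $X_k(t)/t = X_1(t)/t + o(1) \to v_0$ a.s.

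**Main obstacle.** The hard part is case (ii): turning the qualitative condition $\bbar = \infty$ into the quantitative statement that the $\eta$-process started from a finite configuration is attracted to (or dominated by) the minimal stationary measure $\nu_\rho$ rather than to some other member of $\{\nu_{\rho'} : v'\in\cV\}$ or to a null (escaping) regime — and making the ergodic/LLN step for the jump functional of particle~$1$ rigorous despite the process living on a non-compact state space with unbounded local times of holes. I expect this to require the Jackson-network / zero-range comparison of Section~\ref{sec:jackson}, a monotonicity (attractiveness) coupling, and a Lyapunov function built from the $\beta_k$'s (whose divergence is exactly what makes the relevant drift condition work), together with a Borel–Cantelli or martingale-convergence argument to upgrade convergence in probability to the almost-sure strong law.
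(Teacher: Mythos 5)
Your proposal takes a genuinely different route from the paper's, and unfortunately two of its key steps rest on incorrect intuition, so as written it would not go through.

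The paper's proof is a comparison with \emph{finite} particle systems. It uses the two-sided truncation couplings of Proposition~\ref{prop:finite-coupling}: the finite system $\xi^{N,0}$ (with $b_{N+1}$ set to $0$) satisfies $X_k^{N,0}(t)\le X_k(t)$, and the finite system $\xi^{N,1}$ (with rate $b_{N+1}>0$) satisfies $X_k^{N,1}(t)\ge X_k(t)$. Each finite system is stable, so the finite-cloud strong law (Proposition~\ref{prop:finite-cloud}, from~\cite{mmpw}) gives $\lim_t t^{-1}X_k^{N,0}(t)=v_{N,0}=-\alpha_{N+1}/\beta_{N+1}$ and $\lim_t t^{-1}X_k^{N,1}(t)=(1-\alpha_{N+1})/\beta_{N+1}=1/\beta_{N+1}+v_{N,0}$, a.s. The lower bound $\liminf t^{-1}X_k(t)\ge v_0$ then follows from $v_{N,0}\uparrow v_0$, with no dichotomy needed. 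In case~(i) ($v_0=0$) the upper bound $\limsup\le 0$ is free from $X(0)\in\bbXB$. In case~(ii) the hypothesis $\bbar=\infty$ is used exactly to send $1/\beta_{N+1}\to 0$ along a subsequence, yielding $\limsup\le v_0$. No ergodicity of the infinite $\eta$-process is ever invoked.

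The first gap in your proposal is conceptual and serious: in case~(i) you propose to show $\liminf t^{-1}X_1(t)\ge 0$ by ``showing the process restricted to $\bbXB$ cannot drift to $-\infty$ when $v_0=0$'', via a recurrence/Lyapunov argument. But the process \emph{can} drift to $-\infty$ when $v_0=0$. The dog-and-sheep Example~\ref{ex:lattice-atlas-first} has $v_0=0$ yet $X_1(t)\to -\infty$ in probability (Corollary~\ref{cor:infinite_mass_rho}), and indeed $-X_1(t)\gtrsim \sqrt{t}$ a.s.\ (Theorem~\ref{thm:dog_repelled}). What is true is that the escape is sub-linear, not that it does not occur; a Lyapunov function built to certify recurrence would fail, and the argument you are after has to show \emph{sublinearity}, which is exactly what the finite-system comparison delivers without any recurrence claim.

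The second gap is that in case~(ii) the upper bound $\limsup t^{-1}X_1(t)\le v_0$ is not really addressed: you mention ``comparing with the stationary process of Proposition~\ref{prop:invariant-measure} via a coupling/monotonicity argument'', but the stochastic domination $\eta(t)\stochdoml\nu_\rho$ (Corollary~\ref{cor:domination}) controls marginal distributions, not the cumulative counting process $E^{\leftarrow 1}(t)-E^{\rightarrow 1}(t)$ that determines $X_1(t)$ via~\eqref{eq:X1-via-counting}; in particular when $\sum_k\rho_k=\infty$ the dominating measure assigns infinite total occupancy, so the bound on $\sum_k\eta_k(t)$ it gives is vacuous. You also do not connect $\bbar=\infty$ to any concrete estimate; in the paper this condition is precisely what makes the $\xi^{N,1}$-speeds converge to $v_0$. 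Finally, the ergodic/LLN step you sketch for the jump functional of particle~$1$ would require knowing the $\eta$-process is ergodic on a non-compact state space with a non-summable stationary measure, which is not established and is bypassed entirely by the truncation approach. Your reduction of $X_k$ to $X_1$ via $\eta_j(t)/t\to 0$ is a sound idea, but even it is unneeded in the paper, since Proposition~\ref{prop:finite-cloud} gives the speed for \emph{every} $X_k^{N,\cdot}$ at once and the couplings transfer this directly.
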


Proposition~\ref{prop:admissible-solutions-first} shows that
to check whether $\cV \neq \emptyset$, it suffices to check $v_0 \in \cV$.

\begin{example}[Stable with zero speed]
Suppose that~\eqref{ass:positive-rates} holds, and that $v_0  = 0$.
Then, by Proposition~\ref{prop:admissible-solutions-first}, 
\begin{equation}
    \label{eq:0-stable}
0 = v_0 \in \cV \text{ if and only if }
b_1 \cdots b_k > a_1 \cdots a_k \text{ for every } k \in \N.
\end{equation}
The interpretation of~\eqref{eq:0-stable} is, roughly speaking,
that the particles, collectively, all want to travel to the right, which they cannot
do; hence the system
is stable with zero speed.
\end{example}
\begin{remark}[Dual random walk]
\label{rem:p0-v0}
We will obtain another probabilistic interpretation of~$v_0$
in terms of a dual random walk~$Q$.
This
random walk is most conveniently described as
a random walk of a \emph{customer} in a 
version of queueing system
associated to the process $\eta(t)$,
and we defer a description until Section~\ref{sec:jackson} below, where we
introduce the Jackson network representation. Then, assuming~\eqref{ass:positive-rates},  comparing~\eqref{eq:v0-def}
and~\eqref{eq:p0-def} below shows that $|v_0|/a_1$ is an escape probability
of the random walk~$Q$, and, in particular, it holds that 
    \begin{equation}
        \label{eq:v0-recurrnce}
    v_0 = 0 \text{ if and only if $Q$ is recurrent,} 
        \end{equation}
        where $Q$ is the customer walk from Definition~\ref{def:customer-random-walk} below.
\end{remark}

We will see below (Lemma~\ref{lem:stationary-mean}) that if $\sum_{k \in \ZP} \rho_k < \infty$, then the measure $\nu_\rho$ given by~\eqref{eq:df_measure_rho} is supported
on configurations~$\eta \in \bbD$ with $\sum_{k \in \N} \eta_k < \infty$.
Also, we will see (Proposition~\ref{prop:admissible-solutions}\ref{prop:admissible-solutions-v} and Lemma~\ref{lem:finitely-supported})
that condition~\eqref{ass:a-inverse-sum} ensures that
if there is an admissible solution~$\rho$ with $\sum_{k \in \ZP} \rho_k < \infty$,
then in fact $v_0 = 0 \in \cV$ and  $\sum_{k \in \ZP} \alpha_k < \infty$,
and $\nu_\alpha$ is the unique invariant measure supported on $\bbDB$.
The following result gives convergence.

\begin{theorem}[Convergence to stationarity: finite configurations]
\label{thm:finitely-supported-convergence}
 Suppose that~\eqref{ass:positive-rates} and~\eqref{ass:a-inverse-sum} hold,
 that $\cV \neq \emptyset$, $\sum_{k \in \ZP} \alpha_k  < \infty$, and $X(0) \in \bbXB$. Then, 
for all $m_1, m_2, \ldots \in \ZP$,
 \begin{equation}
     \label{eq:finitely-supported-convergence}
 \lim_{ t \to \infty} \IP \bigg[ \bigcap_{k \in \N} \{ \eta_k (t) = m_k \} \bigg] = \prod_{k \in \N} (1-\alpha_k) \alpha_k^{m_k}  . \end{equation}
\end{theorem}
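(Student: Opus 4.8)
The plan is to recognise the separation process $\eta = (\eta(t))_{t\in\RP}$, started from the deterministic finite configuration $\eta(0) = D(X(0)) \in \bbDB$, as an irreducible, non-explosive continuous-time Markov chain on the \emph{countable} state space $\bbDB$ for which $\nu_\alpha$ is an invariant probability distribution, and then to invoke the classical convergence-to-equilibrium theorem for positive-recurrent chains. The case in which $m = (m_k)_{k\in\N} \notin \bbDB$ (infinitely many $m_k \geq 1$) I will dispose of separately and trivially at the end.

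First I would set up the chain. By Proposition~\ref{prop:existence-from-heaviside}, under~\eqref{ass:a-inverse-sum} and $X(0) \in \bbXB$ the process $X(t)$ exists for all time as a non-explosive Markov chain on $\bbXB$; hence $\eta(t) = D(X(t)) \in \bbDB$ for every $t$. As set out in Section~\ref{sec:jackson}, the transition rates of $\eta(t)$ depend on the configuration only through $\eta$ itself --- particle $k$ attempts an admissible left- (resp.\ right-) jump at rate $a_k$ (resp.\ $b_k$), subject only to $\eta_{k-1} \geq 1$ (resp.\ $\eta_k \geq 1$), with particle $1$ always free to jump left --- so $\eta(t)$ is an autonomous continuous-time Markov chain on $\bbDB$; for each $\eta \in \bbDB$ only particle $1$ and finitely many other particles can move, so the total jump rate out of $\eta$ is finite, and non-explosion is inherited from that of $X(t)$. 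Irreducibility on $\bbDB$ follows from~\eqref{ass:positive-rates}: from any $m \in \bbDB$ one reaches the close-packed state $0 \in \bbDB$ by repeatedly jumping particles to the right, pushing holes leftwards and out of the system through particle $1$; conversely, from $0$ one reaches any $m \in \bbDB$ by first jumping particle $1$ leftwards to build up a stock of holes in gap $1$ and then cascading them rightwards. Thus $\bbDB$ is a single communicating class.

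Next I would produce the stationary law. The hypotheses $\cV\neq\emptyset$ and $\sum_k \alpha_k < \infty$ force $\alpha_k \to 0$; writing the $k$-th term of the series in~\eqref{eq:v0-def} as $1/(a_k \alpha_{k-1})$, which is eventually at least $1/a_k$, one then gets from~\eqref{ass:a-inverse-sum} that this series diverges, so $v_0 = 0$, and by Proposition~\ref{prop:admissible-solutions-first} therefore $v_0 = 0 \in \cV$; in particular $\alpha$ is an admissible solution and $\alpha_k \in (0,1)$ for all $k \in \N$. Consequently $\nu_\alpha = \bigotimes_k \geo{1-\alpha_k}$ is a genuine probability measure, and since under $\nu_\alpha$ the events $\{\eta_k \geq 1\}$ are independent with probabilities $\alpha_k$ of finite total sum, the Borel--Cantelli lemma gives $\nu_\alpha(\bbDB) = 1$. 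That $\nu_\alpha$ is invariant for the $\eta$-chain on $\bbDB$ (and is the unique such invariant measure) is exactly Proposition~\ref{prop:admissible-solutions}\ref{prop:admissible-solutions-v} together with Lemma~\ref{lem:finitely-supported}, cited before the statement. An irreducible, non-explosive chain on a countable state space admitting an invariant probability distribution is positive recurrent with that distribution as its unique stationary law, so by the standard continuous-time convergence theorem $\IP[\eta(t) = m] \to \nu_\alpha(\{m\}) = \prod_k (1-\alpha_k)\alpha_k^{m_k}$ as $t \to \infty$, which is~\eqref{eq:finitely-supported-convergence} for $m \in \bbDB$. Finally, if $m \notin \bbDB$ then the left-hand side of~\eqref{eq:finitely-supported-convergence} is $0$ for every $t$ because $\eta(t) \in \bbDB$, while the right-hand side is $0$ because $\alpha_k \to 0$ forces infinitely many of the factors $\alpha_k^{m_k}$ to be at most $1/2$; so the identity holds in all cases.

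The appeal to classical Markov-chain theory is routine; the points that I expect to need real care --- the main obstacle --- are, first, the careful verification that $\eta(t)$ is an autonomous, non-explosive, irreducible Markov chain on $\bbDB$, which leans on the Jackson-network / zero-range description of Section~\ref{sec:jackson} and on Proposition~\ref{prop:existence-from-heaviside}; and, second, checking that under the present hypotheses (only~\eqref{ass:a-inverse-sum}, not the stronger~\eqref{ass:bounded-rates}) one indeed has $v_0 = 0$ and that $\nu_\alpha$ is a genuine invariant \emph{probability} measure concentrated on $\bbDB$, which is where Propositions~\ref{prop:admissible-solutions-first} and~\ref{prop:admissible-solutions} and Lemma~\ref{lem:finitely-supported} do the work.
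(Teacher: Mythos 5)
Your proof is correct and follows essentially the same route as the paper: identify $\eta$ as an irreducible, non-explosive continuous-time Markov chain on the countable state space $\bbDB$ (via Proposition~\ref{prop:existence-from-heaviside}), show that under the stated hypotheses $v_0=0$ so that $\nu_\alpha$ is the unique invariant probability concentrated on $\bbDB$ (via Proposition~\ref{prop:admissible-solutions} and Lemma~\ref{lem:finitely-supported}), and then invoke the standard convergence theorem for positive-recurrent chains. Your explicit verification of irreducibility, the direct derivation that $v_0=0$ from the divergence of $\sum_k 1/(a_k\alpha_{k-1})$, and the clean-up of the case $m\notin\bbDB$ are all sound and merely fill in details that the paper leaves implicit.
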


The following corollary says, under the same hypotheses,
that the position $X_1$ of the leftmost particle is ergodic.

\begin{corollary}
\label{cor:finitely-supported-convergence}
 Suppose that~\eqref{ass:positive-rates} and~\eqref{ass:a-inverse-sum} hold,
 that $\cV \neq \emptyset$, $\sum_{k \in \ZP} \alpha_k  < \infty$, and $X(0) \in \bbXB$.
 Then, there is a probability measure $\pi_\alpha$ on $\Z$ such that, for every finite $A \subset \Z$,
\begin{equation}
\label{eq:X1-ergodic}
 \lim_{t \to \infty} \frac{1}{t} \int_0^t \1{X_1 (s) \in A} \ud s = \pi_\alpha (A), \as \end{equation}
\end{corollary}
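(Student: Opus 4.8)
The plan is to deduce the Cesàro/ergodic statement~\eqref{eq:X1-ergodic} from the convergence-to-stationarity result of Theorem~\ref{thm:finitely-supported-convergence}, using the Jackson-network (countable Markov chain) structure available under~\eqref{ass:a-inverse-sum} and~\eqref{ass:positive-rates}. The key observation is that, when $X(0) \in \bbXB$, the pair $(X_1(t), \eta(t))$ — equivalently the process $\xi(t)$ of~\eqref{eq:xi-def} — is a continuous-time Markov chain on the countable state space $\Z \times \bbDB$, and that under the hypothesis $\sum_k \alpha_k < \infty$ (which, by the results quoted in the excerpt, forces $v_0 = 0 \in \cV$ and makes $\nu_\alpha$ the unique invariant measure supported on $\bbDB$) this chain is \emph{positive recurrent} on the irreducible class containing the starting state. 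Indeed, Theorem~\ref{thm:finitely-supported-convergence} gives convergence of the $\eta(t)$-marginal to the proper distribution $\nu_\alpha$, and the strong law Theorem~\ref{thm:strong-law}(i) gives $X_1(t)/t \to v_0 = 0$; the first of these already shows the $\eta$-chain cannot be transient or null, and one upgrades this to positive recurrence of the joint chain $(X_1,\eta)$ by checking that the marginal chain $X_1(t)$ itself is an irreducible positive-recurrent chain on $\Z$ (it is bounded above, by the remarks preceding Example~\ref{ex:simple-exclusion}, and returns to its starting value infinitely often with the right tail by a regeneration argument, or more directly by exhibiting the stationary distribution $\pi_\alpha$ as the $X_1$-marginal of the joint stationary law).

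Concretely, the steps I would carry out are: (1) set up $(X_1(t),\eta(t))$ as an irreducible continuous-time Markov chain on a countable subset of $\Z \times \bbDB$ (irreducibility on the reachable class following from~\eqref{ass:positive-rates} exactly as in the discussion around Example~\ref{ex:simple-exclusion}), and note non-explosion holds by Proposition~\ref{prop:existence-from-heaviside}; (2) argue that this chain is positive recurrent — for this I would use that $\nu_\alpha$ is a normalisable invariant measure for the $\eta$-component and that $v_0 = 0$ makes the $X_1$-component tight (Theorem~\ref{thm:strong-law} says $X_1(t)/t \to 0$; combined with the uniform \emph{upper} bound $X_1(t) \le X_1(0)$ valid for configurations in $\bbXB$, one gets that $X_1(t)$ spends asymptotically positive fraction of time in bounded sets, preventing transience and null recurrence); (3) having positive recurrence, invoke the classical ergodic theorem for positive-recurrent continuous-time Markov chains: for the unique stationary probability measure $\Pi$ on $\Z \times \bbDB$ and any bounded function $f$, $\frac1t\int_0^t f(X_1(s),\eta(s))\,\ud s \to \int f \, \ud \Pi$ a.s.; (4) specialise to $f(x_1,\eta) = \1{x_1 \in A}$ for finite $A \subset \Z$, and set $\pi_\alpha(A) := \Pi(A \times \bbDB)$, the $X_1$-marginal of $\Pi$; since $\Pi$ is a probability measure, $\pi_\alpha$ is a probability measure on $\Z$, giving~\eqref{eq:X1-ergodic}.

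The step I expect to be the main obstacle is step~(2), establishing positive recurrence of the \emph{joint} chain $(X_1,\eta)$ rather than just null recurrence or positive recurrence of the $\eta$-marginal alone: Theorem~\ref{thm:finitely-supported-convergence} controls only the $\eta$-coordinate, and one must rule out the $X_1$-coordinate drifting to $-\infty$ at sublinear speed (which is consistent with $X_1(t)/t \to 0$ but would destroy positive recurrence). The cleanest remedy is probably a regeneration argument: the configuration in $\bbXB$ is close-packed beyond some finite index, and one shows the system returns to a fixed reference configuration (a translate of $\xheaviside$ in its leftmost part) with finite expected time; alternatively, one can construct the stationary probability $\Pi$ directly as the product of $\nu_\alpha$ on the $\eta$-coordinate with the (yet-to-be-identified) stationary law of $X_1$ given $\eta$, using the Jackson-network input/output balance to show the marginal output rate of the leftmost queue is null when $v_0 = 0$, so $X_1$ is itself positive recurrent. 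Once positive recurrence of $\Pi$ is in hand, steps~(3) and~(4) are the standard continuous-time Markov chain ergodic theorem and pose no difficulty.
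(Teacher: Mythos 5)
Your step (2) is a genuine gap, and the ``main obstacle'' you flag is real, but the clean resolution is different from (and much simpler than) the remedies you sketch, and is the crux of the paper's proof: when $X(0)\in\bbXB$, the conservation identity~\eqref{eq:conservation} gives
\[
X_1(t) \;=\; X_1(0) + \sum_{k\in\N}\eta_k(0) - \sum_{k\in\N}\eta_k(t),
\]
so $X_1(t)$ is a \emph{deterministic function} of $\eta(t)$ (the initial data $X_1(0)$ and $\eta(0)$ being fixed constants). There is therefore no ``joint chain'' to analyse, no conditional law of $X_1$ given $\eta$ to identify, and no risk of $X_1$ drifting to $-\infty$ independently of $\eta$: if $\eta$ is tight in $\bbDB$, so is $X_1$, automatically. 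One simply applies the ergodic theorem for positive-recurrent, irreducible, countable continuous-time Markov chains to the $\eta$-chain on $\bbDB$ (positive recurrence having already been established in the proof of Theorem~\ref{thm:finitely-supported-convergence}), with the bounded test function $f(\eta) = \1{X_1(0)+\sum_k\eta_k(0)-\sum_k\eta_k \in A}$. The limit is then $\pi_\alpha(A) := \nu_\alpha\bigl(\{\eta : X_1(0)+\sum_k\eta_k(0)-\sum_k\eta_k\in A\}\bigr)$, a probability measure on $\Z$ (proper because $\nu_\alpha(\bbDB)=1$).

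Two smaller points. First, the bound you invoke, $X_1(t)\le X_1(0)$, is not generally true; the correct uniform bound is $X_1(t)\le X_1(0)+\sum_k\eta_k(0)$, which again comes straight from the conservation identity. Second, your alternative suggestion to ``construct $\Pi$ as the product of $\nu_\alpha$ with the stationary law of $X_1$ given $\eta$'' cannot work as stated, precisely because that conditional law is degenerate (a point mass): the pair $(X_1,\eta)$ lives on a graph over $\bbDB$, not on a genuine product space, and this is exactly what makes the corollary an immediate consequence of Theorem~\ref{thm:finitely-supported-convergence} rather than a new positive-recurrence problem.
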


More generally, we may have 
 the minimal solution having $\sum_{k \in \ZP} \rho_k = \infty$,
so the Markov chain result, Theorem~\ref{thm:finitely-supported-convergence}, does not apply. 
The following result permits $X(0) \in \bbXB$ (i.e., we start with a fixed 
 finite configuration), or else
$\eta(0)$ is chosen in accordance to some measure~$\nu$
 which is dominated
 by~$\nu_\rho$ defined in~\eqref{eq:df_measure_rho}
 (the definition of stochastic domination is given in
 Section~\ref{sec:stochastic-domination} below).
Then we have \emph{local convergence}, in the following sense.

\begin{theorem}[Convergence to stationarity: general configurations]
\label{thm:local-convergence}
 Suppose that~\eqref{ass:positive-rates} and~\eqref{ass:bounded-rates} hold, 
 and that $\cV   \neq \emptyset$.
 Denote by $\rho = \alpha+v_0\beta$ 
 the minimal solution. Suppose that either $X(0) \in \bbXB$, or $X_1(0) \in \Z$ and $\eta(0)$ is chosen in accordance to some measure~$\nu$
 which is dominated by~$\nu_\rho$.
 Then, 
for every finite $A \subset \N$
 and all $m_k \in \ZP$ ($k \in A$),
 \[ \lim_{ t \to \infty} \IP  \bigg[ \bigcap_{k \in A} \{ \eta_k (t) = m_k \} \bigg] = \prod_{k \in A} (1-\rho_k) \rho_k^{m_k}  . \]
\end{theorem}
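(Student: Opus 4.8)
The plan is to establish convergence via a sandwiching (coupling) argument, exploiting the monotonicity of the exclusion dynamics together with the fact that $\nu_\rho$ is invariant (Proposition~\ref{prop:invariant-measure}). First I would set up a monotone coupling for the $\eta(t)$ process, using the standard graphical / basic-coupling construction for exclusion-type systems: couple all initial conditions on the same family of Poisson clocks governing left- and right-jump attempts of each particle, and check that this coupling preserves the partial order $\eta \preceq \eta'$ (coordinatewise, equivalently $X_k \geq X_k'$ for all $k$ after aligning leftmost particles). The bounded-rates hypothesis~\eqref{ass:bounded-rates} is what makes this graphical construction rigorous for infinitely many particles; this is exactly the content behind the existence claim in Proposition~\ref{prop:invariant-measure}, so I would lean on that.

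Next I would produce an \emph{upper} and a \emph{lower} stationary sandwich. For the upper bound: since the hypothesis gives either $X(0) \in \bbXB$ (so $\eta(0) = D(X(0))$ is dominated by $\nu_\rho$ because $\nu_\rho$ has full support on $\bbD$ and $\eta(0)$ is a fixed finite configuration — any deterministic point is stochastically dominated by a product-geometric with the same or larger means provided $\rho_k \in (0,1)$; here one needs $\eta(0)$ to be dominated, which holds since a.s.\ only finitely many coordinates are nonzero while $\nu_\rho$ charges arbitrarily large values) or $\eta(0) \sim \nu \preceq \nu_\rho$ by assumption, in both cases $\eta(0) \preceq \nu_\rho$ stochastically. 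Running the monotone coupling from $\eta(0)$ and from an independent $\nu_\rho$-distributed start, we get $\eta(t) \preceq \eta^{\nu_\rho}(t) \sim \nu_\rho$ for all $t$, hence $\limsup_t \IP[\eta_k(t) \geq m] \leq \nu_\rho(\eta_k \geq m)$ on any finite window. For the lower bound I would use the minimality of $v_0 = \inf\cV$: because $\rho$ is the \emph{minimal} admissible solution, any configuration with fewer holes than the minimal-stationary amount cannot "create" extra density, and I expect to argue that $\eta(t)$ dominates (in the coupling) a process started from a truncation of $\nu_\rho$ that is genuinely below $\nu_\rho$; letting the truncation level grow and using invariance of $\nu_\rho$ plus a.s.\ local coupling success on finite windows gives $\liminf_t \IP[\eta_k(t) = m_k, k\in A] \geq \prod_{k\in A}(1-\rho_k)\rho_k^{m_k}$. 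Combining the two sandwiches forces local convergence to $\nu_\rho$.

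The main obstacle — and the step I would spend the most care on — is the \emph{lower} bound, i.e.\ ruling out that the system locally "empties out" below the minimal-stationary density. The upper bound is cheap once the monotone coupling and the domination $\eta(0) \preceq \nu_\rho$ are in place, but for the lower bound one must genuinely use that $v_0 = \inf\cV$ and that holes injected from the right (from the semi-infinite reservoir of close-packed particles, or from the bulk of $\nu_\rho$) propagate leftward and refill any finite window. I anticipate handling this through the Jackson-network / zero-range representation promised in Section~\ref{sec:jackson}: in that picture the claim becomes a statement about a sequence of coupled M/M/1-type queues converging to their product-form stationary law, which can be driven by a comparison with the stationary network and a "loss-of-memory" argument on each finite initial segment. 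A secondary technical point is justifying that the coupling on a finite window $A$ succeeds in the sense that $\eta_k(t)$ and $\eta_k^{\nu_\rho}(t)$ agree for $k \in A$ with probability tending to $1$; this requires controlling the influence of the configuration far to the right, which the bounded-rates assumption and the geometric tails of $\nu_\rho$ should make manageable via a finite-speed-of-propagation / exponential-tail estimate on the discrepancy front. Once these pieces are assembled, the stated product-geometric limit follows immediately from the sandwich.
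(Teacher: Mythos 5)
Your plan of squeezing $\eta(t)$ between stationary sandwiches is the right instinct, and the monotone coupling you invoke is exactly the paper's Proposition~\ref{prop:stochastic-monotonicity}. But there are two genuine gaps.

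First, your upper bound for $X(0)\in\bbXB$ rests on the claim that a deterministic finite configuration $\eta(0)$ is stochastically dominated by $\nu_\rho$ ``since a.s.\ only finitely many coordinates are nonzero while $\nu_\rho$ charges arbitrarily large values.'' This is false. Stochastic domination requires $\IP(\eta_k(0)\geq r)\leq \nu_\rho(\eta_k\geq r)$ for \emph{every} $r$, and a point mass at $\eta^*_k=100$ is not dominated by a $\geo{1-\rho_k}$ when $\rho_k^{100}<1$. Only the all-zero configuration is automatically dominated. The paper handles general $\eta^*\in\bbDB$ differently: it first proves the result starting from $\eta(0)=0$, and then for general $\eta^*$ uses that $p:=\IP_{\nu_\rho}(\zeta\geq\eta^*)>0$, decomposes $\IP_{\nu_\rho}$ by conditioning on $\{\zeta\geq\eta^*\}$, and extracts $\IP_{\eta^*}$ via the sandwich $\IP_0\leq\IP_{\eta^*}\leq\IP_{\nu_\rho}/p$ (up to the $(1-p)$ remainder). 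Your argument as written does not close this case.

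Second, the lower bound (which you correctly flag as the hard part) is left heuristic. ``Dominates a process started from a truncation of $\nu_\rho$'' plus a ``loss-of-memory argument'' does not identify a working mechanism, and in particular does not explain why the correct limiting constant $\prod_{k\in A}\rho_k^{m_k}$ emerges as your truncation level grows. The paper's actual device is a comparison with a \emph{finite particle system} $\xi^{N,0}$, obtained by setting $b_{N+1}=0$ (Proposition~\ref{prop:finite-coupling}\ref{prop:finite-coupling-i}); this finite system is pathwise dominated by $\xi$, it is a genuine finite Jackson network to which the explicit convergence result of~\cite{mmpw} (Proposition~\ref{prop:finite-cloud}) applies, and its stationary marginal $\rho_{N,k}=\alpha_k+v_{N,0}\beta_k$ converges to $\rho_k$ as $N\to\infty$ precisely because $v_{N,0}=-\alpha_{N+1}/\beta_{N+1}\to v_0$. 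Without this explicit finite-system comparison, the quantitative limit in the lower bound is not justified. Both gaps would need to be filled before the sandwich closes.
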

\begin{remark}
In Theorem~\ref{thm:local-convergence}, we may have $\card \cV > 1$. Under~\eqref{ass:bounded-rates},
either $\sum_{k\in\ZP} \alpha_k < \infty$ and then $\cV = \{ 0\}$
is the only solution
(Proposition~\ref{prop:admissible-solutions}\ref{prop:admissible-solutions-vi}), so that we are back in the setting of Theorem~\ref{thm:finitely-supported-convergence}, or else
$\sum_{k\in\ZP} \alpha_k = \infty$, every other admissible $\rho$
must have $\sum_{k \in \ZP} \rho_k = \infty$ (see Lemma~\ref{lem:finitely-supported}),
and non-uniqueness can occur only when $v_0 = 0$ (Proposition~\ref{prop:admissible-solutions}\ref{prop:admissible-solutions-iv}) so non-minimal solutions have positive associated speed. The intuition of Theorem~\ref{thm:local-convergence} is that, 
started from a finite configuration, it is only the minimal solution that is accessible.
\end{remark}

In the case when~$\sum_{k\in\ZP} \alpha_k = \infty$, the following
consequence of Theorem~\ref{thm:local-convergence} contrasts with
Corollary~\ref{cor:finitely-supported-convergence}.

\begin{corollary}
\label{cor:infinite_mass_rho} 
 Suppose that~\eqref{ass:positive-rates} and~\eqref{ass:bounded-rates} hold,
 that $\cV \neq \emptyset$, $\sum_{k \in \ZP} \alpha_k  = \infty$, and $X(0) \in \bbXB$.
Then $X_1(t)\to -\infty$ in probability as $t \to \infty$.
\end{corollary}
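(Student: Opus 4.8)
The plan is to combine a deterministic conservation law — available precisely because $X(0)\in\bbXB$ — with the local convergence supplied by Theorem~\ref{thm:local-convergence}. The outcome will be that, for $L$ large, the leftmost particle lies arbitrarily far behind particle~$L$ with probability close to~$1$ uniformly in time, whereas particle~$L$ itself stays bounded, so $X_1(t)$ is forced towards $-\infty$.

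The first step is the conservation law. Fix $L\in\N$ and set $H_L(t):=\sum_{j\ge L}\eta_j(t)$, the number of holes to the right of particle~$L$; this is finite for every $t$ since the process stays in $\bbXB$ (Proposition~\ref{prop:existence-from-heaviside}). Running through the transitions shows that $H_L$ increases by~$1$ exactly when particle~$L$ jumps left, decreases by~$1$ exactly when particle~$L$ jumps right, and is unchanged by every other move; the same two transitions change $X_L$ by $-1$ and $+1$ respectively, so $X_L(t)+H_L(t)$ is constant in $t$ and hence $X_L(t)\le X_L(0)+H_L(0)$ for all $t\ge 0$. Summing~\eqref{eq:eta-def} gives $X_L(t)-X_1(t)=(L-1)+\sum_{j=1}^{L-1}\eta_j(t)$, so for every $M\in\N$ and every $L$ we have the inclusion $\{X_1(t)\ge -M\}\subseteq\{\sum_{j=1}^{L-1}\eta_j(t)\le C_L+M\}$, where $C_L:=X_L(0)+H_L(0)-(L-1)$. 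Because $X(0)\in\bbXB$ there is $k_0$ with $\eta_k(0)=0$ for $k\ge k_0$, whence $C_L=X_{k_0}(0)-k_0+1$ for all $L\ge k_0$; therefore $C:=\sup_L C_L<\infty$, and $\{X_1(t)\ge -M\}\subseteq\{\sum_{j=1}^{L-1}\eta_j(t)\le C+M\}$ for every $L$.

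The second step lets $t\to\infty$ with $L$ fixed. Theorem~\ref{thm:local-convergence} applies, with $\rho=\alpha+v_0\beta$ the minimal solution, and since all variables are $\ZP$-valued, summing the pointwise limit of the joint pmf of $(\eta_1(t),\ldots,\eta_{L-1}(t))$ over the finitely many tuples with $m_1+\cdots+m_{L-1}\le C+M$ gives $\lim_{t\to\infty}\IP[\sum_{j=1}^{L-1}\eta_j(t)\le C+M]=\IP[\sum_{j=1}^{L-1}\zeta_j\le C+M]$, where $\zeta_1,\zeta_2,\ldots$ are independent with $\zeta_j\sim\geo{1-\rho_j}$. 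Hence $\limsup_{t\to\infty}\IP[X_1(t)\ge -M]\le\IP[\sum_{j=1}^{L-1}\zeta_j\le C+M]$ for every $L$. Now let $L\to\infty$: since $\sum_{k\in\ZP}\alpha_k=\infty$ and $v_0\in\cV$ (Proposition~\ref{prop:admissible-solutions-first}), Lemma~\ref{lem:finitely-supported} gives $\sum_{k\in\ZP}\rho_k=\infty$, so, as $\IP[\zeta_j\ge 1]=\rho_j$ and the $\zeta_j$ are independent, the second Borel--Cantelli lemma forces $\sum_{j=1}^{\infty}\zeta_j=\infty$ almost surely; thus $\IP[\sum_{j=1}^{L-1}\zeta_j\le C+M]\to 0$ as $L\to\infty$ by monotone convergence of probabilities. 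Consequently $\limsup_{t\to\infty}\IP[X_1(t)\ge -M]=0$ for every $M$, i.e., $X_1(t)\to -\infty$ in probability.

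The only delicate point is the conservation law in the first step: one must run through the transition types — a jump of particle~$L$, a jump of some particle $j>L$, a jump of some particle $j<L$ — and verify that only the jumps of particle~$L$ itself alter $X_L(t)+H_L(t)$, together with checking that $H_L(t)$ remains finite. This finiteness is exactly where $X(0)\in\bbXB$ enters essentially: for an initial law merely dominated by $\nu_\rho$ one would have $H_L(0)=\infty$ almost surely (because $\sum_k\rho_k=\infty$), the uniform bound on $X_L(t)$ would be lost, and the conclusion can genuinely fail — witness the stationary regime, where $X_1(t)/t\to v_0$ but $X_1(t)$ need not tend to $-\infty$ when $v_0=0$ — consistent with Corollary~\ref{cor:infinite_mass_rho} being stated only for $X(0)\in\bbXB$.
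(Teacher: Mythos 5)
Your proof is correct and follows essentially the same route as the paper's: express $-X_1(t)$ (up to an additive constant determined by the finite initial configuration) as a sum of the $\eta_j(t)$, invoke Theorem~\ref{thm:local-convergence} to pass to the product-geometric limit on finitely many coordinates, and then use that $\sum_k\rho_k=\infty$ forces the limiting total occupancy to be infinite. The paper packages this more compactly by citing the conservation identity~\eqref{eq:conservation} directly and using Lemma~\ref{lem:stationary-mean} to get $\nu_\rho(\bbDB)=0$, where you instead re-derive a localised conservation law $X_L(t)+H_L(t)=\mathrm{const}$ and run second Borel--Cantelli on the geometric marginals --- both of which amount to the same facts.
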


We return to Example~\ref{ex:simple-exclusion}.
 
 \begin{example}[Homogeneous simple exclusion]
     \label{ex:homogeneous-first}
     As in Example~\ref{ex:simple-exclusion}, assume $a_k \equiv a \in (0,\infty)$ and $b_k \equiv b \in (0,\infty)$. Then, by~\eqref{eq:alpha-k-def} and~\eqref{eq:beta-k-def}, $\alpha_k = (a/b)^k$ and, if $a \neq b$, $\beta_k = (1 - (a/b)^k)/(b-a)$,
with $\beta_k = k/a$ if $a=b$. Let $X(0) \in \bbXB$.
\begin{itemize}
    \item 
If $a < b$, then $v_0 = 0$, and 
solutions $\rho_k = \alpha_k + v \beta_k$ are admissible for $v \in \cV = [ 0, b-a)$.
The minimal solution has $\sum_{k \in \ZP} \alpha_k<\infty$,
while the non-minimal solutions have $\lim_{k \to \infty} \rho_k = \frac{v}{b-a} \in (0,1)$. 
Condition~\eqref{ass:bounded-rates} is satisfied, and Theorem~\ref{thm:strong-law} says that $\lim_{t \to \infty} t^{-1} X_k (t) =0$, a.s., for every $k \in \N$,
while Theorem~\ref{thm:finitely-supported-convergence} yields convergence to a product-geometric stationary distribution for the particle separations. 
\item
If $a \geq b$, then one can check that $v_0 = b-a$ but the admissibility condition  fails (since $\alpha_k + v_0 \beta_k \equiv 1$), so that $\cV = \emptyset$. \qedhere
\end{itemize}
\end{example}

Another class of instructive examples
is obtained by perturbing the
critical homogeneous case from Example~\ref{ex:simple-exclusion} by modifying the rates only of the \emph{leftmost} particle. 
As in the finite case~\cite{mmpw}, we call the leftmost particle the ``dog'',
and all the rest ``sheep''.

\begin{example}[Dog and sheep; lattice Atlas model]
\label{ex:lattice-atlas-first}
Suppose that 
\begin{equation}
    \label{eq:dog-sheep-rates}
0 < a_1 = a < b_1 = b < \infty, \text{ and, for all $k \geq 2$, } a_k \equiv b_k \equiv c \in (0,\infty).
\end{equation}
Let $X(0) \in \bbXB$. 
Assuming~\eqref{eq:dog-sheep-rates}, we have $\alpha_k = a/b \in (0,1)$, $\beta_k = \frac{k-1}{c} + \frac{1}{b}$, and 
$v_0 = 0$, $\cV = \{ 0 \}$, and the unique admissible solution is $\rho = \alpha$.
Condition~\eqref{ass:bounded-rates} is satisfied, and Theorem~\ref{thm:strong-law} says that $\lim_{t \to \infty} t^{-1} X_k (t) =0$, a.s., for every $k \in \N$. In this case $\sum_{k \in \ZP} \alpha_k = \infty$,
so Theorem~\ref{thm:finitely-supported-convergence} does not apply, but Theorem~\ref{thm:local-convergence}
yields local convergence of the particle separations to the invariant measure~$\nu_\alpha$ given by $\bigotimes_{k \in \N} \geo{ \frac{b-a}{b} }$.
This is a lattice relative of the continuum \emph{Atlas model}~\cite{bfk,ipbkf};
we refer to~\cite{mmpw} for a discussion of some links between
lattice and continuum models (in the finite setting).
\end{example}

We close this section with one further result about the behaviour of the
leftmost particle (the ``dog'') from Example~\ref{ex:lattice-atlas-first}; we saw already that $\lim_{t \to \infty} t^{-1} X_1 (t) =0$, a.s., i.e., the dog's speed is always zero.
On the other hand,  
Corollary~\ref{cor:infinite_mass_rho} 
says that $X_1 (t) \to -\infty$ in probability.
The following, stronger, result shows that the dog particle is 
transient to the left with approximately diffusive 
rate of escape; cf.~Arratia's result~\eqref{eq:arratia}
in the case $a=b$.

\begin{theorem}
\label{thm:dog_repelled}
Suppose that~\eqref{eq:dog-sheep-rates} holds with $0 < a < b = c = 1$, and that $X(0) \in \bbXB$.
Then, for some $C \in (0,\infty)$ and any~$\delta>0$ 
it holds that, a.s., for all $t \in \RP$ sufficiently large,
\begin{equation}
\label{eq_dog_repelled}
C t^{\frac{1}{2}}\leq  -X_1(t) \leq t^{\frac{1}{2}+\delta}.
\end{equation}
\end{theorem}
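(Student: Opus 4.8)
The plan is to establish the upper and lower bounds in~\eqref{eq_dog_repelled} by quite different arguments, both exploiting the special structure of the dog-and-sheep model with $b = c = 1$, for which the sheep perform (constrained) symmetric simple random walks and only the dog is biased. For the upper bound $-X_1(t) \le t^{1/2+\delta}$, I would first note that $X_1$ is a process that decreases by jumps at rate $a$ (when there is a hole to its left, which in a finite configuration is eventually always the case once the dog has separated) and increases by jumps at rate $b=1$ (when $\eta_1 > 0$). The key point is that the increments of $X_1$ are, in a suitable sense, dominated above by those of a mean-zero random walk: since the sheep block above exerts a positive drift on the dog exactly when $\eta_1=0$, and Theorem~\ref{thm:local-convergence} together with the geometric stationary law shows $\eta_1(t)$ is tight, one expects $X_1(t)$ to behave like a martingale plus a negative-drift correction of order at most $O(1)$ locally. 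I would make this precise by writing $X_1(t) = M(t) + A(t)$ where $M$ is a martingale with quadratic variation growing linearly in $t$ and $A$ is the compensator; the maximal inequality (or a Borel--Cantelli argument along a subsequence plus monotonicity control of oscillations between consecutive integer times) then gives $\limsup_t (-X_1(t))/t^{1/2+\delta} \le 0$ a.s. for every $\delta > 0$.

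For the lower bound $-X_1(t) \ge C t^{1/2}$, the natural strategy is to produce a genuine drift: even though the dog's asymptotic speed is $0$, I would argue that the dog is repelled by the sheep on a fluctuation scale. The cleanest route is via the Jackson-network / customer-random-walk duality flagged in Remark~\ref{rem:p0-v0}: in the critical regime $v_0 = 0$ the customer walk $Q$ is recurrent (indeed it is essentially a critical walk), so the hole $\eta_1$ at the dog's right is positive a positive fraction of the time, which repeatedly pushes the dog back. But to get the $\sqrt{t}$ rate one wants a lower bound of the form: $X_1(t)$ is stochastically below $\tilde X(t)$, where $\tilde X$ is a random walk (or diffusion) that is a martingale reflected appropriately, so that $-\tilde X(t)$ is of order $\sqrt t$ (like a reflected Brownian motion). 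Concretely I would couple the dog's trajectory with a lazy symmetric walk that is reflected (or has a downward push) whenever $\eta_1 = 0$, using monotonicity of the exclusion dynamics in the initial configuration; the reflection local time accumulates at rate $\asymp$ (fraction of time with $\eta_1 = 0$) $> 0$, and the standard fact that reflected-at-origin symmetric random walk sits at distance $\Theta(\sqrt t)$ from the wall with overwhelming probability yields $-X_1(t) \ge C\sqrt t$ eventually, a.s.

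The main obstacle, I expect, will be the lower bound — specifically, turning "the dog gets pushed back often" into a quantitative $\sqrt t$ statement with an a.s. (not just in-probability) conclusion. The difficulty is that $\eta_1(t)$ is not itself Markov (it interacts with $\eta_2, \eta_3, \dots$ through the zero-range / Jackson dynamics), so one cannot simply read off the fraction of time $\{\eta_1 = 0\}$ from a one-dimensional computation; one needs either the explicit product-geometric stationary measure (which gives $\IP[\eta_1 = \infty\text{-time average} = 0] = 1-\rho_1 = (b-a)/b > 0$) combined with an ergodic theorem for $\eta$, or a monotone coupling that bounds $\eta_1$ below by a tractable auxiliary queue. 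A secondary technical point is non-explosion and the passage between the finite-configuration Markov chain on $\bbXB$ (where we start) and the stationary $\nu_\alpha$-driven process used to compute occupation fractions; this is handled by the attractiveness of the exclusion process and the results already established (Proposition~\ref{prop:existence-from-heaviside}, Theorem~\ref{thm:local-convergence}). Finally, matching the exponents — showing the upper bound cannot be pushed below $1/2$ and the lower bound cannot be pushed above $1/2$ — is exactly the diffusive scaling of a reflected symmetric walk, so once the coupling is in place the rates are forced.
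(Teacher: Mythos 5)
Your proposal takes a genuinely different route from the paper, and both halves contain real gaps.

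\textbf{Lower bound.} The paper's proof does not use a reflected-walk coupling. It uses the identity $-X_1(t)=\sum_{k\in\N}\eta_k(t)$ from~\eqref{represent_X_1} and then \emph{counts customers}: with $b=c=1$ the customer walk $Q$ from Definition~\ref{def:customer-random-walk} is simple symmetric random walk on $\ZP$ absorbed at $0$, so each customer that reaches queue $2$ fails to return to queue $1$ within time $t$ with probability $\gtrsim t^{-1/2}$; since $\Theta(t)$ customers enter by time $t$, a Chernoff bound gives $\gtrsim\sqrt t$ survivors with stretched-exponentially small failure probability, and Borel--Cantelli finishes. Crucially, the customer count is a \emph{deterministic lower bound} for $-X_1(t)$, so the ``eventually'' claim comes directly from summability. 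Your reflected-walk picture does not have this property: a reflected random walk returns to its wall infinitely often, so ``$-X_1(t)\ge C\sqrt t$ eventually a.s.'' cannot be read off from a domination by a single reflected process; you would have to upgrade a fluctuation-scale heuristic to a one-sided a.s.\ statement, which is exactly the obstacle you flag but not one your sketch resolves. You also don't explain how the ``restoring force'' mechanism (drift $-a$ when $\eta_1=0$, drift $1-a$ when $\eta_1>0$) could be monotonically coupled to a tractable one-dimensional process, given that $\eta_1$ interacts with the whole tail.

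\textbf{Upper bound.} The paper avoids any semimartingale decomposition. It combines Proposition~\ref{prop:stochastic-monotonicity} (domination by the product-geometric $\nu_\alpha$, so that $\sum_{k\le K}\eta_k(t)\le\gamma K$ with exponentially small failure probability for $\gamma>\tfrac{1-a}{a}$) with a large-deviations bound for the customer random walk (no customer can be further than $t^{1/2+\delta/2}$ at time $t$), so that $-X_1(t)=\sum_k\eta_k(t)$ is supported on the first $O(t^{(1+\delta)/2})$ queues and bounded there, and Borel--Cantelli again finishes. Your Doob decomposition $X_1=M+A$ has $A(t)=-at+b\int_0^t\1{\eta_1(s)>0}\,\ud s$, which is the difference of two quantities each of order $t$; the claim that $A$ is ``a negative-drift correction of order at most $O(1)$ locally'' does not control the cumulative compensator to precision $o(t^{1/2+\delta})$. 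You would need an ergodic estimate on the occupation time of $\{\eta_1>0\}$ at $t^{1/2+\delta}$ accuracy, which Theorem~\ref{thm:local-convergence} alone (a local weak-convergence statement) does not supply; obtaining it would require essentially the ingredients the paper uses anyway. The paper's route is both more elementary and more robust: it needs only the stationary domination and the SRW hitting estimates, not any martingale CLT or compensator control.

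In short, your diagnosis of where the difficulty lies (the a.s.\ lower bound) is correct, but the proposed mechanisms for both bounds have concrete gaps that the paper's customer-counting argument is specifically designed to sidestep.
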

\begin{remark}
    \label{rem:dog_repelled}
    Theorem~\ref{thm:dog_repelled}
    could be extended 
    to a larger class of situations, such as
    several (but finitely many) ``dogs'' with more general rates, provided that the system
    nevertheless has $\cV \neq \emptyset$;
essentially the same proof would work,
as indicated in Remark~\ref{rem:dog_repelled_proof} below.
\end{remark}

We plan, in future work, to examine finer asymptotics of the leftmost particle, and consider in greater generality questions of recurrence and transience, for example.
 
\subsection{Discussion and related literature}
\label{sec:literature}

In the earlier paper~\cite{mmpw}, Malyshev and the present
authors
studied finite particle systems, and used classical results of Goodman \& Massey on partial stability
for Jackson networks~\cite{GM84}
to obtain a decomposition of the system (determined   by the rate parameters) into maximal stable subsystems (``clouds''). Each stable cloud has its own product-geometric limit distribution for the inter-particle separations, and its own asymptotic speed. We refer to~\cite{mmpw} for further results, and also a discussion of the relation to adjacent continuum (diffusion) models~\cite{bfk,ipbkf}.

The process $\eta(t)$ of inter-particle separations
in the exclusion process is a particular case
of the \emph{zero-range} process. 
Since the 1990s, there has been a great deal of
study of zero-range and exclusion processes with disordered 
environments, including heterogeneous but deterministic,  or randomly chosen,
rates;
in the exclusion process, rates attached to particles (as in our case)
correspond to rates attached to \emph{sites} in the zero-range process.
From the earliest work, it was recognized that 
there may be a rich landscape of invariant measures
(including product-geometric measures), and influence of invariant measures on
dynamics, either locally or globally,
is mediated through  densities of initial configurations. We refer to~\cite{evans96,evans97,kf,jl,fs,afgl,bfl,bj,bmrs2015,bmrs2017,bmrs2020a,bmrs2020b} and references therein for
extensive results on   phenomena concerning convergence to equilibrium and hydrodynamic limits.
 
For  infinite, heterogeneous exclusion systems,
the majority of the literature addresses doubly-infinite systems (i.e., particles enumerated by~$\Z$, rather than $\ZP$). 
For settings  in that context parallel to ours, we mention in particular
work of 
Benjamini~\emph{et al.}~\cite{bfl}
and a more recent programme of Bahadoran~\emph{et al.}~\cite{bmrs2015,bmrs2017,bmrs2020a,bmrs2020b}. 
These works display many interesting phenomena,  some similar to those presented here, including a family of invariant measures parametrized by density and possessing an associated speed.
These papers (and others cited therein) consider, in some cases, a more general random walk kernel than our nearest-neighbour walk,
but their ``site-wise disorder'' is a (random or nonrandom) environment 
 less general than ours: for example, in~\cite{bmrs2015}, 
  in our notation, it is assumed that 
$a_{k+1} = p \alpha (k)$  and $b_k = q \alpha (k)$, 
where $p + q =1$ and $p > q$, so that the underlying skeleton random walk is homogeneous, and the inhomogeneous rates are  such that
$c < \alpha(k) \leq 1$ for all $k$ and some $0 < c <1$.
Results in~\cite{bfl,bmrs2015}
give sufficient ``density'' conditions  on the initial configuration to guarantee  convergence 
to an invariant measure governed by the ``slow'' vertices. Subsequent results concern higher dimensions~\cite{bmrs2017} and hydrodynamic limits~\cite{bmrs2017,bmrs2020a}. We emphasize that the main distinction between our results and most of this body of work is that our interest is in semi-infinite systems, particularly the behaviour of the leftmost particle and its neighbourhood, i.e., we examine extremal behaviour, rather than bulk behaviour. Since the leftmost particle has no influence from particles to its left, behaviour can be quite different than in the doubly-infinite setting.

We wish to draw particular attention to~\cite{bfl},  
which does include results on semi-infinite systems (although their main interest is in systems indexed by $\Z$, as described above).
However, 
in the setting which comes closest to ours, in~\cite{bfl} it is assumed that all particles have a uniformly negative intrinsic drift, and it is shown, roughly speaking, that the system is transient to the left at the speed of the slowest particles.

\subsection{Outline of the paper}
\label{sec:outline}

The rest of the paper builds to the proofs of the results presented in Section~\ref{sec:main-results}. 
In Section~\ref{sec:jackson} we introduce the Jackson network associated with the
particle system, which provides access to some useful terminology and intuition.
Here we give the proofs of our results on existence and
invariant measures (Propositions~\ref{prop:existence-from-heaviside} and~\ref{prop:invariant-measure}), and introduce a dual
 random walk, the customer random walk, and its properties. Section~\ref{sec:admissible} turns to a discussion of the admissible
 solutions to~\eqref{eq:stable-traffic} and the structure of the set~$\cV$ from Definition~\ref{def:admissible}. Here we give proofs of Lemma~\ref{lem:solution-space} and Proposition~\ref{prop:admissible-solutions-first}.
 Section~\ref{sec:invariant-measures} examines the role of the
 invariant measures~$\nu_\rho$ defined at~\eqref{eq:df_measure_rho},
 and the significance of cases in which~$\nu_\rho$ is supported
 on finite or on infinite configurations. The concept of second-class customers
 furnishes some useful stochastic domination results, and then we present
 comparison results with \emph{finite} particle systems. These tools enable us to present, in Section~\ref{sec:invariant-measures}, proofs of the convergence
 results Theorems~\ref{thm:finitely-supported-convergence} and~\ref{thm:local-convergence}, and their corollaries for the behaviour of the left-most particle,
 Corollaries~\ref{cor:finitely-supported-convergence} and~\ref{cor:infinite_mass_rho}. Finally, Section~\ref{sec:leftmost-particle}
 presents the proof of the strong law, Theorem~\ref{thm:strong-law}, and the asymptotics for the dog and sheep example, Theorem~\ref{thm:dog_repelled}.

\section{Infinite Jackson network and auxiliary random walk}
\label{sec:jackson}

\subsection{Jackson network representation}
\label{sec:jackson-terminology}

Jackson networks of finitely many queues have been extensively studied (see e.g.~\cite[Chs.~2~\&~7]{ChenYao} or~\cite[Ch.~1]{serfozo});   infinitely many queues have also received some attention~\cite{khmelev,ks,FF94,bfl}.
For us, the Jackson network serves as a 
framework in which to construct and describe (an enriched version of) the process $\eta$, as we describe in this section;
the link between exclusion processes and Jackson networks is well known,
going back at least to~\cite{kipnis} (see~\cite{mmpw} for further literature). 

In Section~\ref{sec:existence}
we use this approach to provide the construction of the process~$\eta$,  and hence~$\xi$, 
and give the proof of the existence and invariance results, Propositions~\ref{prop:existence-from-heaviside} and~\ref{prop:invariant-measure}. Then in Section~\ref{sec:customer-walk} we use the
Jackson representation to describe the \emph{customer random walk}, which serves as a useful tool
in our analysis.

First, in some generality, consider a countably infinite system of queues, enumerated by~$\N$. The parameters of the system are arrival rates $\lambda = (\lambda_i)_{i \in \N}$,
service rates $\mu = (\mu_i)_{i \in \N}$ and $P = (P_{ij})_{i,j\in\N}$, a sub-stochastic routing matrix.
Exogenous customers entering the system arrive at queue $i \in \N$ via an independent Poisson process of rate $\lambda_i \in \RP$.
Queue $i \in \N$ serves customers
at exponential rate $\mu_i \in \RP$
(i.e., in Kendall's notation, we have 
an $M/M/1$ queue at each server).
 Once a customer at queue $i$ is served, it is routed to a queue~$j$
with probability $P_{ij}$, while with probability $\delta_i := 1 - \sum_{j \in \N} P_{ij}$ the customer departs from the system.

Provided $\sum_{i \in \N} \lambda_i >0$ and $\sum_{i \in \N} \delta_i >0$, customers both enter and leave the system, and
it is called an \emph{open Jackson network}. We assume that every queue can be \emph{filled}, 
meaning that, for every $i \in \N$, there is a $j \in \N$ and $k \in \ZP$ for which $\lambda_j > 0$ and $(P^k)_{ji}>0$,
and that every queue can be \emph{drained}, meaning that,  for every $i \in \N$, there is a $j \in \N$ and $k \in \ZP$ for which $\delta_j > 0$ and $(P^k)_{ij}>0$.

We now explain the Jackson network connection to the particle system
model as described in Section~\ref{sec:model}; we use the notation introduced there.
Recall from~\eqref{eq:eta-def} that
$\eta_k (t) = X_{k+1} (t) - X_k (t) -1$,
the number of holes
between consecutive particles at time $t\in\RP$. Define
\begin{equation}
\label{eq:mu-def}
\mu_i := b_i + a_{i+1}, \text{ for }
i \in \N, \end{equation}
and
\begin{equation}
\label{eq:lambda-def}
\lambda_1 := a_1,   \text{ and } \lambda_i := 0 \text{ for } i \geq 2.
\end{equation} 
Also define the  matrix $P:= (P_{i,j})_{i,j \in \N}$ by
\begin{equation}
\label{eq:P-def}
\begin{split}
P_{i,i-1} & := \frac{b_i}{\mu_i} = \frac{b_i}{b_i+a_{i+1}}, \text{ for } i \geq 2; \\
P_{i,i+1} & := \frac{a_{i+1}}{\mu_i} = \frac{a_{i+1}}{b_i+a_{i+1}}, \text{ for } i \geq 1;
\end{split}
\end{equation}
and $P_{i,j}:=0$ for all $i,j$ with $\vert i-j\vert \neq 1$.

The process $\eta = (\eta_{i})_{i \in \N}$ can, under appropriate
conditions on the rates and initial states, 
be realised as the queue-length process for a corresponding Jackson network, namely,
the Jackson network with parameters $\lambda, \mu$ and $P$ given as functions of $(a_i,b_i)_{i \in \N}$ through formulas~\eqref{eq:mu-def}, \eqref{eq:lambda-def} and~\eqref{eq:P-def}. 

Putting aside questions of existence
for the moment,
which will be addressed in Section~\ref{sec:existence}, let us explain the correspondence between the queueing process and the particle system, as this will allow us to use two parallel
lenses to study our processes. 
In the queueing network, the customers waiting at queue~$k$
are equal in number to the unoccupied sites between particles $k$ and $k+1$ in the particle system. 
Exogenous customers enter the queueing network only when the leftmost particle jumps to the
left (rate $a_1$). Customers at queue~$k$ are served if either particle~$k$ jumps right
(rate $b_k$) or if particle $k+1$ jumps left (rate $a_{k+1}$). If  
particle~$k$ jumps right, the customer 
is routed to queue $k-1$ (if $k \geq 2$, at rate $b_k = \mu_k P_{k,k-1}$) or leaves the system (if $k=1$).
If particle $k+1$ jumps left, the customer is routed to queue $k+1$ (at rate $a_{k+1} = \mu_k P_{k, k+1}$). 
Customers leave the system only when the leftmost particle jumps to the right (rate $\mu_1 \delta_1 = b_1$).

It is sometimes useful to distinguish customers as they enter and leave the network.
At time $0$, we suppose that each queue is occupied by at most finitely many customers, i.e., $\eta(0) \in \bbD$; these
we call \emph{endogenous} customers (there are at most countably many), and we enumerate them in some arbitrary order. Customers that enter the system during time $(0,\infty)$ a.s.~do so at distinct times and with only finitely many arrivals in every compact time interval; these we call \emph{exogenous} customers, and we enumerate them in order of increasing arrival time. 

Before addressing existence of the process~$\eta$, we describe, first informally
but in a way that can be made rigorous once existence is in hand,
how one can construct~$\xi$
given the queueing process. We  would like to preserve the 
intuitive picture that 
 $X_1$ moves to the left only when exogenous customers enter the system, and $X_1$ moves to the right only when customers exit the system due to service at queue~1.
Take the construction of the Jackson network as given, and  introduce the following associated
counting processes.
  \begin{itemize}
     \item Let $E^{\rightarrow1}(t)$ be the number of exogenous customers that enter the system during $(0,t]$.
     \item Let $E^{\leftarrow1}(t)$ be the number of customers that depart the system during $(0,t]$.
 \end{itemize}
The superscript~$1$ indicates that entry and exit is possible only via queue~$1$.
Then we declare
 \begin{equation}
     \label{eq:X1-via-counting}
 X_1 (t) - X_1 (0) :=    E^{\leftarrow1}(t)  - E^{\rightarrow1}(t) = -M(t) , \text{ for all } t \in \RP, 
  \end{equation}
where $M(t) := E^{\rightarrow1}(t) - E^{\leftarrow1}(t)$ is the net change in total occupancy
of the queueing system during time $[0,t]$. If $T(t) := \sum_{k \in \N} \eta_k (t) \in \ZP \cup \{ \infty \}$ counts the total number of customers in the system, then since customers enter at bounded rate, $T(0) < \infty$ implies that $T(t) < \infty$ for all $t \in \RP$, and so 
$M(t) = T(t) - T(0)$ whenever $T(0) < \infty$.
Consequently,
 \begin{equation}
     \label{eq:conservation}
     X_1 (t) - X_1 (0) = - \sum_{k \in \N} \bigl( \eta_k (t) - \eta_k (0) \bigr), \text{ whenever } \sum_{k \in \N} \eta_k (0) < \infty.
 \end{equation}
 Our aim, therefore, is to use either~\eqref{eq:X1-via-counting}
 or~\eqref{eq:conservation} to define $X_1$ from $\eta$, and hence to give the full process~$\xi$.
There are some obstacles to this scheme (and hence to existence),
most significantly the possibility of \emph{explosion} of customers through the network,
meaning that customers are routed successively through increasing queues so fast that they ``escape to infinity'' in finite time. In the particle system, one has a choice how to accommodate this; for example, there could be a corresponding flux of particles ``from infinity'' entering the system (see
also Remark~\ref{rem:unbounded-rates} below). 
Under our rate hypotheses~\eqref{ass:a-inverse-sum} or~\eqref{ass:bounded-rates}, this explosion is excluded, as we explain in Section~\ref{sec:existence}.

\begin{remark}
\label{rem:service}
    The correspondence between the Jackson network and the particle system 
    is through the lengths of the queues in the queueing network, i.e., we can imagine customers are indistinguishable.
    One can enrich the queueing process by distinguishing customers, and/or retaining information about the time
    elapsed since their arrival at the present queue, and enforcing any priority service regime, such as FIFO (first-in, first-out). For definiteness, we can think of the service regime as being FIFO, but this is not important for our results.
\end{remark}

\subsection{Existence of the Markov process}
\label{sec:existence}

Consider
the configurations for $\eta \in \bbD$ with finitely many holes (or, equivalently,
finitely many occupied queues)
\begin{equation}
\label{eq:D-F-def}
\bbDB := \Bigl\{ \eta \in \bbD : \sum_{k \in \N} \eta_k < \infty \Bigr\}.  \end{equation}
In terms of the particle   configuration $x \in \bbX$ and the function $D$ defined at~\eqref{eq:D-def}, $D(x) \in \bbDB$ if and only if $x \in \bbXB$ as defined at~\eqref{eq:XB-def}.
Note that $\bbDB$ is countable.

 \begin{proof}[Proof of Proposition~\ref{prop:existence-from-heaviside}]
Take $\eta(0) \in \bbDB$ and $X(0) = (X_1(0) , \eta(0))$ for arbitrary $X_1(0) \in \Z$.
The process~$X (t) = (X(t))_{t \in \RP}$
can be realized
as a Markov chain on the augmented countable state space ${\overline \bbX}_\mathrm{F} := \bbXB \cup \{ \partial \}$
where the state $\partial$ accommodates potential explosion.
We claim that condition~\eqref{ass:a-inverse-sum}
ensures that, a.s., explosion does not occur. 
To see this, set $\bbXB^{(1)} := \Z \times \{ \xheaviside \}$, and for $k \geq 2$ write 
\[ \bbXB^{(k)} := \{ x \in \bbX : D_{k-1} (x) > 0 \text{ but } D_j (x) = 0 \text{ for all } j \geq k \} ,\]
those states for which all particles from particle~$k$ onwards
are tightly packed,
or, equivalently, queue~$k-1$ is the rightmost occupied queue.
Then $\bbXB = \cup_{k \in \N} \bbXB^{(k)}$
is a partition of $\bbXB$ in which the transition rates of $X$ restricted to
$\cup_{k = 1}^{m} \bbXB^{(k)}$ are bounded, for every $m \in \ZP$,
by, say,  $\sum_{k=1}^{m} | a_k + b_k | < \infty$.

At this point it is convenient, and no loss of generality, to assume $\eta (0) = 0$ so $X(0) \in \bbXB^{(1)}$.
If $\sigma_k := \inf \{ t \in \RP : X (t) \in \bbXB^{(k)} \}$, 
and $\sigma_\infty := \lim_{k \to \infty} \sigma_k \in [0,\infty]$ (the limit
exists by monotonicity)
sufficient for non-explosion is
thus to prove that $\Pr ( \sigma_\infty = \infty ) = 1$.
This we can achieve by comparison with a pure birth process
(see e.g.~\cite[pp.~19--20]{anderson}). 
Indeed, the increments $\sigma_{k+1} - \sigma_k$ dominate a sequence of independent exponential
$a_{k}$ random variables,
 since whenever queue~$k-1$ is occupied, queue $k$ receives a customer at rate~$a_{k}$.
 Hence, by monotone convergence,
 $\Exp \sigma_\infty = \sum_{k \in \N} \Exp ( \sigma_{k+1} - \sigma_k) \geq  \sum_{k \in \N} 1/a_k$,
 and~\eqref{ass:a-inverse-sum} ensures this sum diverges, and thus $\Pr ( \sigma_\infty = \infty ) = 1$.
 \end{proof}
 
We next give a proof of Proposition~\ref{prop:invariant-measure}.
Recall the definition of the product-geometric measure $\nu_\rho$ on $\bbD = \ZP^\N$
from~\eqref{eq:df_measure_rho}.

\begin{proof}[Proof of Proposition~\ref{prop:invariant-measure}]
For construction of the process and verification of invariance
of the measures $\nu_\rho$ for admissible~$\rho$,
we draw heavily on~\cite{FF94};
a related approach can be found in Appendix~B of~\cite{bmrs2017}, and 
see also~\cite{ferrari92,andjel82,bfl} for further remarks and references.

First, we indicate how the existence for the inhomogeneous semi-infinite exclusion process with bounded rates
follows from a standard Harris graphical construction~\cite{harris}. We can formally
define the queue process~$\eta$ on $\overline \bbD := \{ \infty \} \times \bbD$ by introducing a ``sink queue''
labelled by~$0$ which always holds an infinite number of customers,
 to accommodate customers going in and out of the system (in our case, the reservoir of
 holes to the left of the leftmost particle). 
 In coordinates, that is $\oeta_0 := \infty$ and $\oeta_k = \eta_k$ for $k \in \N$. 
 The generator of the process $\oeta := (\infty,\eta) \in \overline\bbD$ is then~$\cL$ acting on  local functions~$f : \overline\bbD \to \RP$ via
\begin{align}
\label{generator_LL}
 \cL f(\oeta)  = & 
    \sum_{k \in \N} \1{\oeta_k \neq 0 }\Bigl(a_{k+1} \bigl(f(\oeta^{k,k+1})-f(\oeta)\bigr)
    + b_k \bigl(f(\oeta^{k,k-1})-f(\oeta)\bigr)\Bigr)\nonumber\\
    & {}\quad + a_1 \bigl(f(\oeta^{0,1})-f(\oeta)\bigr),
\end{align}
 where $\oeta^{x,y} := (\oeta_z^{x,y})_{z \in \ZP}$ is
 the modification of $\oeta$ with a customer removed from queue~$x$ and added to queue~$y$, i.e.,
\[
  \oeta_z^{x,y} :=  
 \begin{cases}
   \eta_z, & z\notin \{ x,y \},\\
   \eta_x-1, & z=x,\\
   \eta_y+1, & z=y,
 \end{cases}
\]
with the convention $\infty \pm 1 := \infty$.

The graphical construction
is erected on an array of independent marked Poisson processes
associated with each queue, so the Poisson process labelled by $k \in \ZP$ has left-pointing arrows arriving at rate $b_k$ (apart from queue~$0$, which has none) and right-pointing arrows at rate $a_{k+1}$.
 For the case of uniformly bounded rates, fix some time $t \in \RP$, and try to construct the process up to time~$t$. Since rates are uniformly bounded, there is a positive probability at least $\re^{- ct}$, $c>0$, uniformly in $k \in \ZP$, that the Poisson process labelled $k$ has no arrivals in time $[0,t]$. Hence, a.s., there are infinitely many $k \in \ZP$ for which the corresponding Poisson process has no arrivals, and hence the construction of the process can be reduced to construction on (countably many) finite pieces. This completes the construction of $\eta$
(by forgetting the constant first coordinate of $\oeta$)
and the proof of invariance of $\nu_\rho$ for each admissible~$\rho$.
Then we can define~$X_1$ through~\eqref{eq:X1-via-counting}, and this gives
the construction of the particle system process~$\xi$.

Suppose that $v \in \cV$ with $\rho = \alpha + v \beta$ the corresponding
admissible solution of~\eqref{eq:stable-traffic}. 
For verification of invariance of $\nu_\rho$ defined at~\eqref{eq:df_measure_rho}, we essentially follow~\cite{FF94}; for this we   construct (a candidate for) the reverse process $\teta = (\teta(t))_{t \in \RP}$
with respect to the measure~$\nu_\rho$. It is again a process
with the state space $\overline\bbD$ with the queue~$0$ acting as an infinite reservoir
(i.e., always with infinite customers); for $k\in \N$,
customers are routed from queue $k$ to queue $k-1$ at rate $a_k \rho_{k-1} / \rho_{k}$
and from queue $k$ to queue $k+1$ at rate $b_{k+1} \rho_{k+1} / \rho_k$.
Let us denote its generator, formally defined analogously 
to~\eqref{generator_LL}, by $\tcL$:
\begin{align*}
 \tcL f(\teta)  = & 
    \sum_{k \in \N} \1{\teta_k \neq 0 }\biggl(\frac{b_{k+1}\rho_{k+1}}{\rho_{k}} \bigl(f(\teta^{k,k+1})-f(\teta)\bigr)
    + \frac{a_k\rho_{k-1}}{\rho_k} \bigl(f(\teta^{k,k-1})-f(\teta)\bigr)\biggr)\nonumber\\
    & {}\quad + b_1 \rho_1 \bigl(f(\teta^{0,1})-f(\teta)\bigr).
\end{align*}
To ensure the existence of the reverse process, we need to verify
that the above transition rates are bounded (this is, essentially,
condition~(9) of Theorem~1 of~\cite{FF94}). To do this, 
first suppose that $\liminf_{k\to\infty}\rho_k>0$.
Since $0 < \rho_k <1$ for all $k \in \N$, this means that $\rho_k \in [c,1]$ for some $c>0$
and all $k \in \N$, and then~\eqref{ass:bounded-rates} implies the rates in
the specification of $\tcL$ are uniformly bounded. On the other hand, observe 
that~\eqref{eq:v-difference} implies that
\begin{equation}
    \label{eq:rate-reversal}
 \frac{\rho_{k+1}}{\rho_k} = \frac{a_{k+1}}{b_{k+1}}
            + \frac{v}{\rho_k b_{k+1}}.
\end{equation}
If we assume that $\liminf_{k\to\infty}\rho_k=0$, then it follows from
Proposition~\ref{prop:admissible-solutions}\ref{prop:admissible-solutions-vi} below, and~\eqref{ass:bounded-rates},
that we must have $v = v_0=0$, and so~\eqref{eq:rate-reversal} shows that
the   process $\hat \eta$ is the same as the original process $\oeta$,
and then boundedness of rates is direct from~\eqref{ass:bounded-rates}.

The next step is to verify that $\oeta$ and $\teta$ 
are indeed reverse of one another with respect to~$\nu_\rho$.
This amounts to checking that 
\begin{equation}
\label{reversibility_generators}
 \int f\cL g \, d\nu_\rho = \int g{\tcL} f \, d\nu_\rho
\end{equation}
for all local functions $f,g$. We do not include the calculation
here since it was done in Proposition~1 of~\cite{FF94} in a more
general case. Now, it only remains to note that~\eqref{reversibility_generators}
implies that~$\nu_\rho$ is invariant both for~$\eta$
(since inserting $f\equiv 1$ to~\eqref{reversibility_generators}
yields $\int \cL g \, d\nu_\rho = 0$)
and~$\teta$ (now, use~\eqref{reversibility_generators}
with $g\equiv 1$). 

Finally, we verify the asymptotic speed result stated at~\eqref{eq:invariant-speed}.
Recall the representation~\eqref{eq:X1-via-counting}.
By time-reversal, it holds that, under the invariant measure $\nu_\rho$,
 the exit process
of~$\eta$ equals in distribution to the entrance process of~$\teta$.
In other words, $E^{\leftarrow1}(t)$ is a homogeneous Poisson process of rate
$b_1 \rho_1$.
On the other hand, $E^{\rightarrow1}(t)$ is a homogeneous Poisson process of rate
$a_1$. By the strong law for the Poisson process, $\lim_{t \to \infty} t^{-1}E^{\leftarrow1}(t) = b_1 \rho_1$ and $\lim_{t \to \infty} t^{-1}E^{\rightarrow1}(t) = a_1$, a.s., and hence, by~\eqref{eq:X1-via-counting},
we obtain, a.s., $\lim_{t \to \infty} t^{-1} X_1 (t) = b_1 \rho_1 - a_1 = v$, by~\eqref{eq:v-difference}.
This establishes~\eqref{eq:invariant-speed} and ends the proof.
\end{proof}
 
\begin{remark}
\label{rem:unbounded-rates}
In the present paper, we do not consider cases that fall outside
those covered by one or other of the existence results,
Propositions~\ref{prop:existence-from-heaviside} and~\ref{prop:invariant-measure}.
For example, starting from~$\bbXB$,
if one has $\sum_{k \in \N} a_k^{-1} < \infty$ 
it seems natural to declare that customers exit the system ``to infinity'' at a constant rate,
and then one has some choice about whether to correct the ``speed'' of the leftmost particle $X_1$.
As an example, consider $1=b_1>a_1=a$ and 
$a_k=b_k$ for $k\geq 2$ with $h:=\sum_{k \in \N} a_k^{-1}<\infty$. Then we see that
$\alpha_k=a$ and $\beta_k=a_1^{-1}+\cdots+a_k^{-1}$,
meaning that $v_0 = -a/h$ and $\rho = \alpha-(a/h)\beta$ is admissible.
A  direct calculation
implies that any finite cloud of leftmost particles 
in isolation 
would, however, go
to the right. 
Exploring such (somewhat pathological) cases is beyond the scope of this 
paper.
\end{remark}

\subsection{Customer random walk}
\label{sec:customer-walk}

Apart from providing access to a wealth of theory and intuition from queueing networks,
the Jackson representation enables us to define naturally a useful auxiliary random walk
whose properties shed light on the particle system.
From now on, when we use the terminology ``customer'', it is to be understood in the context 
of the queueing system described in Section~\ref{sec:jackson-terminology}.
At this point it is convenient
to distinguish one customer from another (cf.~Remark~\ref{rem:service}).
Since $\eta(0) \in \bbD$ consists of (at most) countably many customers,
and finitely many new customers enter the system by time $t \in \RP$,
there are countably many customers involved over time~$\RP$. We enumerate endogenous
 customers arbitrarily and then subsequent (i.e., exogenous)
 customers are enumerated sequentially according to their time of entry into the system (if customers depart the system, they do not return).

Write $q_i (t) \in \N$ for the queue occupied by customer~$i \in \N$
at time $t \in \RP$; we set $q_i (t) =0$ if, by time $t$, the $i$th customer
either did not yet enter to the system, or already departed (that is, all exogenous customers start at state~$0$, and all customers
leaving the system are absorbed at~$0$).
When the customers queue at a server, they may be
served according to some priority policies that we 
do not specify at this point (see Remark~\ref{rem:service}).
Thus we do not determine the holding-time distributions of
the process~$q_i$, but its jump chain performs a random walk.

 Formally,
 denote by $\sigma_i(0)$ the time 
 when the $i$th customer enters into the system
 (in particular, $\sigma_i(0)=0$ for any endogenous
 customers), and let $\sigma_i(n)$
 be the holding time of the $i$th customer
 before it makes its $n$th jump. 

Denote by $Q_i$ the jump chain (or skeleton random walk)
corresponding to $q_i$: 
\[
 Q_i (n)= q_i(\sigma_i(0)+\cdots+\sigma_i(n)), \text{ for } n \in \ZP, 
\]
i.e., the process $Q_i$ is the discrete-time random walk on $\N$ obtained from observing the
sequence of queues visited by customer~$i$.

Consider now a modified queueing system, in which each M/M/1 queue is replaced by an M/M/$\infty$ queue.
That is (cf.~Section~\ref{sec:jackson-terminology}),
the  exogenous 
arrival rates $\lambda$ are given by~\eqref{eq:lambda-def},
service rates $\mu$ are given by~\eqref{eq:mu-def},
and routing matrix $P$ is given by~\eqref{eq:P-def}, but now \emph{all} customers waiting at queue~$k$ are served at the exponential rate~$\mu_k$. In other words, customers encounter service as if they were alone at the queue, and so progress independently through the system. We denote by
$\heta = (\heta_k)_{k \in \N}$ this M/M/$\infty$ system, and by $\hat q_i(t)$ the queue occupied by customer~$i \in \N$ at time $t \in \RP$. Then $\hat q_i$ has the same (in law)
jump chain~$Q_i$ as does $q_i$.
Moreover, if $\hsig_i (n)$ denotes the holding time of the $i$th customer
 before it makes its $n$th jump in the M/M/$\infty$ system,
 we have that the random variables
 $(\hsig_i(n); i,n\in \N)$   are conditionally 
 independent given $(Q_i(n); i,n\in \N)$ 
and such that $\hsig_i(n)$ has exponential
 distribution with rate $\mu_{Q_i(n)}$. 

Then, it is straightforward to see that we have the following.
\begin{proposition}
\phantomsection
\label{prop:customer-random-walk}
\begin{enumerate}[label=(\roman*)]
\item
 \label{prop:customer-random-walk-i}
 The processes 
 $(Q_i)_{i\in \N}$ are independent
 random walks on $\ZP$, each with transition matrix~$P$ defined in~\eqref{eq:P-def} with 
 absorption on first hitting of~$0$.
 \item
 \label{prop:customer-random-walk-ii}
The holding times in the  M/M/1 system and in the  M/M/$\infty$ system
are such that $(\sigma_i(n); i,n\in \N)$ stochastically dominate $(\hsig_i(n); i,n\in \N)$.
\end{enumerate}
\end{proposition}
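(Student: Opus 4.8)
The plan is to prove the two statements in turn, both of which follow essentially from the construction of the M/M/1 and M/M/$\infty$ systems from the same underlying graphical data. First I would fix the common probability space: build both $\eta$ (the M/M/1 Jackson network) and $\heta$ (the M/M/$\infty$ modification) from the same array of marked Poisson processes introduced in the Harris construction of Section~\ref{sec:existence}, together with i.i.d.\ uniform random variables used to resolve routing decisions (left vs.\ right, or departure at queue~1). In the M/M/$\infty$ system every customer at queue~$k$ is served at rate $\mu_k$ independently of the others, so each customer, once it enters the system, performs its own walk driven by its personal service clocks and routing uniforms; the routing probabilities are exactly those of $P$ in~\eqref{eq:P-def}, with the customer leaving (absorbed at~$0$) precisely when a service at queue~$1$ routes it out (probability $\delta_1 = b_1/\mu_1$). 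Since distinct customers use distinct, independent clocks and uniforms in the M/M/$\infty$ system, the walks $(Q_i)_{i\in\N}$ are independent, each with transition matrix~$P$ and absorption at~$0$. This is part~\ref{prop:customer-random-walk-i}; the only thing to check carefully is that the jump chain of customer~$i$ in the \emph{original} M/M/1 system has the \emph{same law} as in the M/M/$\infty$ system — this holds because, under any work-conserving priority policy, whenever customer~$i$ is the one in service at queue~$k$ its next move is determined by an exponential-$\mu_k$ clock and an independent routing uniform, exactly as in the M/M/$\infty$ case; the priority policy only reshuffles \emph{which} customer moves when, not the routing law of a given customer once it is selected.

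For part~\ref{prop:customer-random-walk-ii} I would construct a coupling showing that each holding time $\sigma_i(n)$ in the M/M/1 system dominates the corresponding $\hsig_i(n)$ in the M/M/$\infty$ system. The key observation is that customer~$i$, waiting at queue~$Q_i(n)=k$, is served at total rate $\mu_k$ in the M/M/$\infty$ system (so $\hsig_i(n)\sim\mathrm{Exp}(\mu_k)$), whereas in the M/M/1 system under a work-conserving (e.g.\ FIFO) discipline the server at queue~$k$ devotes its rate-$\mu_k$ service capacity to at most one customer at a time, so customer~$i$ experiences an instantaneous service rate of either $\mu_k$ (when in service) or $0$ (when waiting behind others), hence its actual waiting time before its own next jump is at least an $\mathrm{Exp}(\mu_k)$ random variable. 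Concretely: drive the M/M/1 service at queue~$k$ by the same rate-$\mu_k$ Poisson clock as drives customer~$i$'s service in the M/M/$\infty$ system; the M/M/1 server ``uses'' a clock tick only when customer~$i$ happens to be the one selected, so the time until customer~$i$ actually leaves queue~$k$ in the M/M/1 system is a sum of i.i.d.\ $\mathrm{Exp}(\mu_k)$ gaps (one consumed per service completion at that queue, only the last of which is customer~$i$'s), which stochastically dominates a single $\mathrm{Exp}(\mu_k)$ variable, namely $\hsig_i(n)$. Doing this simultaneously and consistently across all $i$ and $n$ — so that the whole family $(\sigma_i(n))$ dominates $(\hsig_i(n))$ jointly, not merely marginally — is the delicate bookkeeping step.

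I expect the main obstacle to be precisely that joint coupling in part~\ref{prop:customer-random-walk-ii}: one must exhibit, on a single probability space, versions of the M/M/1 and M/M/$\infty$ processes for which the \emph{jump chains agree customer-by-customer} (so that $Q_i$ really is common to both) while simultaneously $\sigma_i(n)\ge\hsig_i(n)$ for every $i,n$. The natural device is to run the M/M/$\infty$ system as the ``free'' system in which each customer ignores the others, realize the M/M/1 system on the same Poisson/uniform data by having each queue's server process its customers one at a time (in FIFO order, say) using the ticks of the same rate-$\mu_k$ clock, and then observe that the sequence of queues visited by customer~$i$ is identical in the two constructions (the routing uniform attached to customer~$i$'s $n$-th service is the same object), while the real time at which customer~$i$ completes its $n$-th service in the M/M/1 system is no earlier than in the M/M/$\infty$ system because service of customer~$i$ at a queue can only be delayed by the presence of others, never accelerated. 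Once this coupling is in place, both claims are immediate, and the conditional-independence and conditional-exponentiality statements for $(\hsig_i(n))$ given $(Q_i(n))$ that were asserted just before the Proposition are simply the defining properties of the M/M/$\infty$ construction.
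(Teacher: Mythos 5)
The paper offers no proof of this Proposition: it declares the result ``straightforward to see'' immediately after the setup, so there is no argument to compare against. Your approach --- couple the two systems on a common probability space with customer-attached routing uniforms, so that the $Q_i$ become functions of disjoint independent data --- is the natural one, and your proof of part~(i) is essentially sound. The gap is in your coupling for part~(ii).

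The phrase ``drive the M/M/1 service at queue $k$ by the same rate-$\mu_k$ Poisson clock as drives customer $i$'s service in the M/M/$\infty$ system'' is not well-defined: in the M/M/$\infty$ system there is no \emph{single} rate-$\mu_k$ clock at queue $k$; by construction each customer present there is served independently, so there are as many clocks as customers, and it is unspecified which one the M/M/1 server at $k$ should be following, either while customer $i$ waits behind others or after customer $i$ departs. Your later paragraph repeats ``the same rate-$\mu_k$ clock'' (singular) and does not resolve this. Moreover, if you insist on making the M/M/$\infty$ system primary and building M/M/1 ``on the same Poisson data'', you have to justify that the resulting M/M/1 process has the correct law, which is not immediate. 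The direction that avoids all of this is the opposite one: take the M/M/1 system as given. Attach to each customer $i$ its own sequence of routing uniforms $(U_{i,n})_{n\geq 1}$, so that $Q_i$ is a deterministic function of $Q_i(0)$ and $(U_{i,n})_n$ alone; independence of the $Q_i$ is then immediate, which gives~(i). For~(ii), observe that in the M/M/1 system under any non-idling, non-preemptive discipline the holding time of customer $i$ at queue $Q_i(n)$ decomposes as a non-negative waiting time plus the \emph{pure service duration} of customer $i$ itself; by the strong Markov property that pure service duration is a fresh $\mathrm{Exp}(\mu_{Q_i(n)})$ variable independent of the past, and one checks that the family of such own-service times is conditionally independent given $(Q_i)_i$ with the correct exponential marginals. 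Setting $\hsig_i(n)$ equal to this own-service time produces a legitimate realization of the M/M/$\infty$ holding times satisfying $\sigma_i(n)\geq\hsig_i(n)$ pathwise for all $i,n$, which is the stochastic domination claimed. In short: extract the M/M/$\infty$ holding times \emph{from} the M/M/1 process rather than trying to lay the M/M/1 process on top of the M/M/$\infty$ one.
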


Proposition~\ref{prop:customer-random-walk}\ref{prop:customer-random-walk-i}
says that all the $Q_i$ perform random walks on $\ZP$ with the same law. It is thus convenient
to introduce the following terminology for a generic such walk.

\begin{definition}[Customer random walk]
\label{def:customer-random-walk}
Denote by $Q = (Q(n))_{n \in \ZP}$
a discrete-time Markov chain on $\ZP$ with transition matrix~$P$ defined in~\eqref{eq:P-def}.
We call $Q$ the~\emph{customer random walk} associated to the queueing network. 
\end{definition}

Assuming~\eqref{ass:positive-rates}, the Markov chain $Q$ is an irreducible nearest-neighbour random walk. The following fact is classical (see e.g.~\cite[\S I.12]{chung} or~\cite[\S 2.2]{mpw-book}). We define the
first return time to the origin by
\begin{equation}
    \label{eq:tau-def}
    \tau := \inf \{ n \in \N : Q(n) = 0\};
\end{equation}
here the usual convention that $\inf \emptyset := +\infty$ is in force.

\begin{lemma}
\label{lem:customer-walk}
Suppose that~\eqref{ass:positive-rates} holds. Then 
the random walk $Q$ is transient if and only if $\sum_{k \in \N} ( a_{k+1} \alpha_k )^{-1} < \infty$
    and positive recurrent if and only if $\sum_{k \in \N}    a_{k+1} \alpha_k   < \infty$. Moreover,
  \begin{equation}
  \label{eq:p0-def}
  p_0 := \IP ( \tau = \infty \mid Q(0) = 1 ) = \left( 1 + \frac{b_1}{a_2} + \frac{b_1 b_2}{a_2 a_3} + \cdots \right)^{-1} \in [0,1) . \end{equation}
\end{lemma}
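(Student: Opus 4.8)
The plan is to analyse the nearest-neighbour random walk $Q$ on $\ZP$ with transition matrix $P$ from~\eqref{eq:P-def}, using the standard one-dimensional birth-and-death machinery (harmonic functions, or equivalently the electrical-network/Lyapunov-function approach of the cited references). The key observation is that the ratios $P_{k,k+1}/P_{k,k-1} = a_{k+1}/b_k$ for $k \geq 2$, so the relevant products telescope into the quantities $\alpha_k = (a_1\cdots a_k)/(b_1\cdots b_k)$ appearing in~\eqref{eq:alpha-k-def}. Specifically, for a nearest-neighbour walk on $\{0,1,2,\dots\}$ absorbed at $0$, the standard scale function is built from partial sums of $\prod_{j} (P_{j,j-1}/P_{j,j+1})$; here $\prod_{j=2}^{k} (b_j/a_{j+1})$ which one checks equals a constant multiple of $1/(a_{k+1}\alpha_k)$ (up to factors involving $a_1,b_1$ that are absorbed into the normalisation $\rho_0=1$).

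First I would set up the harmonic function: seek $h$ with $h(0)=0$ and $P h = h$ on $\N$, i.e.\ $h(k) = \sum_{j=1}^{k} c_j$ where the increments satisfy $c_{j+1}/c_j = P_{j,j-1}/P_{j,j+1} = b_j/a_{j+1}$ for $j\ge 2$. Iterating gives $c_j \propto 1/(a_{j+1}\alpha_j)$ (with $c_1$ a normalisation constant). Transience is then equivalent to $h$ being bounded, i.e.\ $\sum_{j\in\N} (a_{j+1}\alpha_j)^{-1} < \infty$, which is the first claimed criterion. For the escape probability, I would use the gambler's-ruin formula: starting from $1$, the probability of reaching $N$ before $0$ is $h(1)/h(N)$, so letting $N\to\infty$ gives $p_0 = \IP(\tau=\infty\mid Q(0)=1) = c_1 / \sum_{j\in\N} c_j = 1/\sum_{j\in\N}(c_j/c_1)$. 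Computing $c_j/c_1 = \prod_{i=2}^{j} (b_i/a_{i+1})$ explicitly yields the series $1 + \frac{b_1}{a_2} + \frac{b_1 b_2}{a_2 a_3} + \cdots$, giving~\eqref{eq:p0-def}; the bound $p_0\in[0,1)$ is immediate since the series has at least two positive terms (indeed $\sum\ge 1+b_1/a_2>1$), and $p_0\ge 0$ trivially, with $p_0=0$ precisely when the walk is recurrent. For positive recurrence, I would invoke the standard criterion that an irreducible recurrent birth-and-death chain is positive recurrent iff its stationary measure has finite total mass; the reversible stationary measure $\pi$ satisfies $\pi(k)/\pi(k-1) = P_{k-1,k}/P_{k,k-1}$, which telescopes to $\pi(k) \propto a_{k+1}\alpha_k / (\text{const})$ — more precisely one finds $\pi(k)$ proportional to $a_{k+1}\alpha_k$ up to boundary constants — so summability $\sum_{k\in\N} a_{k+1}\alpha_k < \infty$ is equivalent to positive recurrence.

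The main obstacle — really more of a bookkeeping nuisance than a genuine difficulty — is getting the boundary terms at $k=1$ exactly right, since the recursion $c_{j+1}/c_j = b_j/a_{j+1}$ only holds for $j\ge 2$ (because $P_{1,0}$ routes customers out of the system rather than within $\N$, and the recursion near the boundary behaves slightly differently). One must be careful that $P_{1,2}=a_2/\mu_1$ and there is no $P_{1,0}$ contributing to an internal balance; the cleanest route is to treat $Q$ directly as a walk on $\{0,1,2,\dots\}$ with $0$ absorbing and use the explicit nearest-neighbour ruin formula $\IP_k(\text{hit } N \text{ before } 0) = \big(\sum_{i=1}^{k} \gamma_i\big)/\big(\sum_{i=1}^{N}\gamma_i\big)$ with $\gamma_1 = 1$ and $\gamma_{i+1} = \gamma_i \cdot P_{i,i-1}/P_{i,i+1} = \gamma_i b_i/a_{i+1}$ for $i\ge 1$; this sidesteps any asymmetry and directly produces the stated series once one notes $\gamma_i = \prod_{m=1}^{i-1} b_m/a_{m+1}$. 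Since all three assertions (transience criterion, positive-recurrence criterion, formula for $p_0$) are instances of completely classical results — I would cite~\cite[\S I.12]{chung} or~\cite[\S 2.2]{mpw-book} as the paper already flags — the proof amounts to identifying the abstract criteria with the concrete products $\alpha_k$, and verifying the arithmetic that $\prod_{m=1}^{i-1} b_m/a_{m+1}$ matches the displayed expansion in~\eqref{eq:p0-def}.
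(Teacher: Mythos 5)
Your proposal is correct and follows exactly the classical birth-and-death chain analysis that the paper itself cites (\cite[\S I.12]{chung}, \cite[\S 2.2]{mpw-book}) rather than proving; the paper gives no independent proof of this lemma.

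Two small remarks on the bookkeeping you flagged as an ``obstacle''. First, the boundary asymmetry at $k=1$ that you worry about is in fact absent: since $P$ is a sub-stochastic routing matrix with $P_{1,2}=a_2/\mu_1$ and the deficit routed to the absorbing state $0$, one has $P_{1,0}=1-P_{1,2}=b_1/\mu_1$, so the ratio $P_{k,k-1}/P_{k,k+1}=b_k/a_{k+1}$ holds for every $k\ge 1$ without exception; your $\gamma$-based formulation in the second paragraph is exactly this, so nothing further is needed. Second, your computed reversible measure is slightly off: the detailed-balance recursion $\pi(k)/\pi(k-1)=P_{k-1,k}/P_{k,k-1}=(a_k/\mu_{k-1})/(b_k/\mu_k)$ telescopes to $\pi(k)\propto \mu_k\alpha_k=(b_k+a_{k+1})\alpha_k$, not to $a_{k+1}\alpha_k$ ``up to a constant'' (the correction factor $\mu_k/a_{k+1}=1+b_k/a_{k+1}$ is $k$-dependent). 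Fortunately this does not affect the conclusion, because $b_k\alpha_k=a_k\alpha_{k-1}$, so $\sum_k\mu_k\alpha_k=a_1\alpha_0+2\sum_k a_{k+1}\alpha_k$ and the two series converge or diverge together; it is worth writing that identity explicitly so the equivalence is airtight.
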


In particular, as indicated in Remark~\ref{rem:p0-v0},
comparing~\eqref{eq:v0-def}
and~\eqref{eq:p0-def} shows that $|v_0| = a_1 p_0$,
so that $v_0 =0$ whenever $p_0 =0$ ($Q$ recurrent) and
$v_0 < 0$ whenever $p_0 > 0$ ($Q$ transient).

\section{Admissible solutions of the stable traffic equation}
\label{sec:admissible}

The goal of this section is a closer
examination of the set $\cV$
and the associated solutions~$\rho$
of the stable traffic equation~\eqref{eq:stable-traffic},
as defined at Definition~\ref{def:admissible}. The $v \in \cV$ and associated~$\rho = \alpha+v\beta$ govern
invariant measures~$\nu_\rho$ and speeds of dynamics for the particle system,
as presented in Section~\ref{sec:main-results}, so understanding their properties
is of key importance. We will almost always assume the positivity condition~\eqref{ass:positive-rates},
and we will frequently impose the 
non-explosion condition~\eqref{ass:a-inverse-sum}. In a few places we will impose the uniform boundedness condition~\eqref{ass:bounded-rates}. The main focus of this section
is to present and prove Proposition~\ref{prop:admissible-solutions} below (from which Proposition~\ref{prop:admissible-solutions-first} is readily deduced), together with a collection of illustrative examples. First we give a proof of the basic Lemma~\ref{lem:solution-space}.

\begin{proof}[Proof of Lemma~\ref{lem:solution-space}]
First, consider~\eqref{eq:stable-traffic} without the
 constraint on~$\rho_0$.
The solutions~$\rho$ to~\eqref{eq:stable-traffic} form a linear 
subspace of $\R^{\ZP}$, and this subspace is two-dimensional 
because~$\rho_0$ and~$\rho_1$
 uniquely determine the rest. We claim that $\alpha = (\alpha_k)_{k \in \ZP}$
and $\beta = (\beta_k)_{k \in \ZP}$
as defined at~\eqref{eq:alpha-k-def} and~\eqref{eq:beta-k-def}
constitute a basis for the solution space.
The  vectors $\alpha,\beta \in \R^{\ZP}$ are linearly
independent (because one is strictly positive and the other
is not). Moreover, it is straightforward to check that both $\alpha$ and $\beta$
solve the system~\eqref{eq:stable-traffic}: this is familiar from the
usual solution to the difference equations associated with the general gambler's ruin problem~\cite[pp.~106--108]{kt}.
Then, since $\rho_0=1$, 
any solution of~\eqref{eq:stable-traffic} with the given boundary condition
must have the form 
$\rho = \alpha + v \beta$ for some $v \in\R$, which is precisely~\eqref{eq:rho-explicit}. 

Observe from
 the definitions of $\alpha_k$ and $\beta_k$
at~\eqref{eq:alpha-k-def} and~\eqref{eq:beta-k-def}
 that
\begin{equation}
\label{eq:beta-alpha-recursions}
 \alpha_{k+1} = \frac{a_{k+1}}{b_{k+1}} \alpha_k  \text{ and } \beta_{k+1} = \frac{a_{k+1}}{b_{k+1}} \beta_k + \frac{1}{b_{k+1}} , \text{ for all } k \in \ZP.\end{equation}
 We obtain from~\eqref{eq:stable-traffic} and~\eqref{eq:beta-alpha-recursions} that $v = (\rho_k-\alpha_k) / \beta_k = ( \rho_{k+1}-\alpha_{k+1} ) / \beta_{k+1}$. After some algebra, this yields
\begin{equation}
\label{eq:rho_k+1_rho_k}
 \rho_{k+1} = \frac{a_{k+1}}{b_{k+1}} \rho_k + \frac{v}{b_{k+1}}, \text{ for every } k \in \ZP,
\end{equation}
from which~\eqref{eq:v-difference} follows.
\end{proof}

As already observed in Section~\ref{sec:main-results},
 there may be \emph{no} admissible solution to~\eqref{eq:stable-traffic} (i.e., $\cV = \emptyset$), or there may be many (i.e., non-uniqueness). For the former case, see Remark~\ref{rem:non-admissible}. In the latter case,
 as explained in  Section~\ref{sec:main-results}, a distinguished role is played by the \emph{minimal} solution, associated to speed~$v_0$ given at~\eqref{eq:v0-def},
 defined at Definition~\ref{def:minimal-solution}. We next look in more detail at the set~$\cV$ and the role of~$v_0$.

If $v \in \cV$, with a
corresponding
admissible solution $\rho = \alpha + v \beta$, 
we obtain from~\eqref{eq:v-difference} that, since $\rho_k \in [0,1]$ for all $k \in \ZP$,
$- \inf_{k \in \ZP} a_k\leq v \leq \inf_{k \in \ZP} b_k$,
and so
\begin{align}
\label{eq:V-bound}
\cV \subseteq \Bigl[ - \inf_{k\in\ZP} a_k , \inf_{k \in \ZP} b_k \Bigr]. 
\end{align}
Moreover, the fact that $\alpha_k, \beta_k \geq 0$ readily yields  the following two closure properties of $\cV$:
\begin{align}
\label{eq:V-interval}
&{} \text{if $v, v' \in \cV$ with $v < v'$, then $[v,v'] \subseteq \cV$;} \\
\label{eq:V-central}
&{} \text{if $v \in \cV$ with $v > 0$, then $[0,v] \subseteq \cV$.}
\end{align}
The following result describes the permissible
structures of the set $\cV$,  identifies explicitly the 
least member $v_0$,
defined through~\eqref{eq:v0-def},
of (non-empty) $\cV$,
and gives some sufficient conditions for  $\cV$ to contain at most one element (necessarily, $v_0$) and
for $v_0 <0$ or $v_0 =0 $.

\begin{proposition}
\label{prop:admissible-solutions}
Suppose that~\eqref{ass:positive-rates} holds. Then there always exists the limit~$v_0$
defined via~\eqref{eq:v0-def}.
Define
\begin{equation}
    \label{eq:beta-bar-def}
    \abar := \limsup_{k \to \infty} \alpha_k \in [0,\infty], \text{ and }
    \bbar := \limsup_{k \to \infty} \beta_k \in [0,\infty].
\end{equation}
Then the following hold; apply conventions $1/\infty := 0$ and $1/0:=\infty$.
\begin{enumerate}[label=(\roman*)]
 \item
 \label{prop:admissible-solutions-i}
Either $\cV = \emptyset$ (empty);
$\cV = \{ v_0\}$ (a singleton); or else there exists $v_1$ with $0 < v_1-v_0 \leq 1/\bbar$ for which
$\cV = [ v_0, v_1]$ or $\cV = [v_0, v_1 )$ (an interval).
  \item 
   \label{prop:admissible-solutions-ii}
The following implications are valid:
\begin{equation}
\abar = \infty ~\Longrightarrow ~ \bbar = \infty ~ \Longrightarrow ~ \card \cV \leq 1.
    \end{equation}
\item
\label{prop:admissible-solutions-iii}
  The following  equivalences are valid:
  \[ v_0 < 0 ~ \Longleftrightarrow ~ \sum_{k\in \ZP} (a_{k+1} \alpha_k )^{-1} < \infty 
 ~ \Longleftrightarrow ~ \sum_{k\in \N} (b_{k} \alpha_k )^{-1} < \infty. \]
    \item 
    \label{prop:admissible-solutions-iv}
If~\eqref{ass:a-inverse-sum} holds, then either $\card \cV \leq 1$, or else $v_0 = 0$.
 \item 
\label{prop:admissible-solutions-v}
If~\eqref{ass:a-inverse-sum} holds and 
there exists an admissible $\rho$ 
with $\sum_{k \in \ZP} \rho_k < \infty$, 
then $v_0 = 0 \in \cV$ and $\sum_{k \in \ZP} \alpha_k < \infty$.
\item 
\label{prop:admissible-solutions-vi}
If~\eqref{ass:bounded-rates} holds and
there exists an admissible $\rho$ with $\liminf_{k \to \infty} \rho_k =0$,
then $v_0 = 0$ and $\cV = \{0\}$.
 \end{enumerate}
\end{proposition}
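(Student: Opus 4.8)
The plan is to work throughout with the parametrisation $\rho=\alpha+v\beta$ from Lemma~\ref{lem:solution-space}, together with two elementary consequences of the recursions~\eqref{eq:beta-alpha-recursions}: dividing them gives $\beta_k/\alpha_k=\sum_{j=1}^{k}(a_j\alpha_{j-1})^{-1}$ for $k\in\N$, and the $\alpha$-recursion gives $b_k\alpha_k=a_k\alpha_{k-1}$. Under~\eqref{ass:positive-rates} each summand is strictly positive, so $\beta_k/\alpha_k$ is strictly increasing and converges in $(0,\infty]$; hence $\alpha_k/\beta_k$ converges in $[0,\infty)$, which is exactly the asserted existence of $v_0=-\lim_k\alpha_k/\beta_k$, and since the first summand is $1/a_1$ one reads off $v_0\in(-a_1,0]$. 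The same display proves part~\ref{prop:admissible-solutions-iii} at once: by~\eqref{eq:v0-def}, $v_0<0$ is equivalent to $\sum_{j\ge1}(a_j\alpha_{j-1})^{-1}<\infty$, and reindexing $j=k+1$ and using $(a_j\alpha_{j-1})^{-1}=(b_j\alpha_j)^{-1}$ identifies this series with each of the two series in~\ref{prop:admissible-solutions-iii}.

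For part~\ref{prop:admissible-solutions-i} I would make $\cV$ explicit. Since $\beta_k>0$ for $k\ge1$, the solution $\alpha+v\beta$ is admissible exactly when $-\alpha_k/\beta_k<v<(1-\alpha_k)/\beta_k$ for every $k\ge1$. As $-\alpha_k/\beta_k$ increases strictly to $v_0$ without attaining it, the lower family of constraints amounts to $v\ge v_0$; so $\cV\subseteq[v_0,\infty)$, and whenever $\cV\neq\emptyset$ any $v\in\cV$ yields $0<\alpha_k+v_0\beta_k\le\alpha_k+v\beta_k<1$ for all $k\ge1$, so $v_0\in\cV$ and $\inf\cV=v_0\in\cV$. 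The upper constraints cut out $(-\infty,v_1)$ or $(-\infty,v_1]$, where $v_1:=\inf_{k\ge1}(1-\alpha_k)/\beta_k$ is finite when $\cV\neq\emptyset$ by~\eqref{eq:V-bound}; hence $\cV$ is $\emptyset$, $\{v_0\}$, $[v_0,v_1)$, or $[v_0,v_1]$. Moreover, if $v'\in\cV$ with $v'>v_0$, then $(v'-v_0)\beta_k=(\alpha_k+v'\beta_k)-(\alpha_k+v_0\beta_k)<1$ for all $k$, so $\beta_k<1/(v'-v_0)$; letting $v'\uparrow v_1$ gives $v_1-v_0\le 1/\bbar$. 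The same squeeze gives $\bbar=\infty\Rightarrow\card\cV\le1$ in part~\ref{prop:admissible-solutions-ii}, while $\abar=\infty\Rightarrow\bbar=\infty$ follows from $\beta_k\ge\alpha_k/a_1$.

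Parts~\ref{prop:admissible-solutions-iv} and~\ref{prop:admissible-solutions-v} go by contradiction under~\eqref{ass:a-inverse-sum}. For~\ref{prop:admissible-solutions-iv}: if $\card\cV\ge2$, the squeeze just used makes $\beta$, hence $\alpha$ (via $\alpha_k\le a_1\beta_k$), bounded; if additionally $v_0<0$, then part~\ref{prop:admissible-solutions-iii} gives $\sum_k(a_{k+1}\alpha_k)^{-1}<\infty$, and boundedness of $\alpha$ forces $\sum_k a_{k+1}^{-1}<\infty$, contradicting~\eqref{ass:a-inverse-sum}. For~\ref{prop:admissible-solutions-v}: with $\rho^{(0)}:=\alpha+v_0\beta$ the minimal solution, $\rho^{(0)}_k\le\rho_k$, so it suffices to prove $v_0=0$ (then $v_0\in\cV$ by~\ref{prop:admissible-solutions-i} and $\sum_k\alpha_k\le\sum_k\rho_k<\infty$). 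If $v_0<0$, then $\cV=\{v_0\}$ by~\ref{prop:admissible-solutions-iv}, so $\rho=\rho^{(0)}$; with $S:=\sum_{j\ge1}(a_j\alpha_{j-1})^{-1}=-1/v_0<\infty$ the identities above give $\rho_k=\frac{\alpha_k}{S}\sum_{j>k}(a_j\alpha_{j-1})^{-1}$, and interchanging sums, using $\alpha_0+\dots+\alpha_{j-1}\ge\alpha_{j-1}$,
\[
 \sum_{k\ge0}\rho_k=\frac{1}{S}\sum_{j\ge1}\frac{1}{a_j\alpha_{j-1}}\sum_{i=0}^{j-1}\alpha_i\ \ge\ \frac{1}{S}\sum_{j\ge1}\frac{1}{a_j}=\infty,
\]
contradicting $\sum_k\rho_k<\infty$; hence $v_0=0$.

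Finally, part~\ref{prop:admissible-solutions-vi} assumes~\eqref{ass:bounded-rates} (hence~\eqref{ass:a-inverse-sum}); set $A=\sup_k a_k$ and $B=\sup_k b_k$. Given an admissible $\rho=\alpha+v\beta$ with $\liminf_k\rho_k=0$, I would first show $v=0$. The case $v>0$ is impossible since $\rho_k\ge v\beta_k\ge v/b_k\ge v/B>0$ for all $k$. The case $v<0$ is impossible because then $\cV=\{v_0\}$ with $v_0<0$ by~\ref{prop:admissible-solutions-iv}, so $\rho=\rho^{(0)}$; part~\ref{prop:admissible-solutions-iii} with $a_{k+1}\le A$ gives $\sum_k\alpha_k^{-1}<\infty$, and bounding $(a_j\alpha_{j-1})^{-1}\ge(A\alpha_{j-1})^{-1}$ in $\rho_k=\frac{\alpha_k}{S}\sum_{j>k}(a_j\alpha_{j-1})^{-1}$ gives $\rho_k\ge\frac{1}{SA}>0$ for all $k$, contradicting $\liminf_k\rho_k=0$. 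Hence $v=0$ and $\rho=\alpha$; applying the same two exclusions to $\rho^{(0)}$, which satisfies $0<\rho^{(0)}_k\le\alpha_k$ and so also has $\liminf_k\rho^{(0)}_k=0$, forces $v_0=0$; the description in part~\ref{prop:admissible-solutions-i} then yields $\cV=\{0\}$, using that any admissible solution of speed $v'>0$ would be bounded below by $v'/B$. The step I expect to be the main obstacle is this $v<0$ case of part~\ref{prop:admissible-solutions-vi}: excluding it is where one must genuinely combine parts~\ref{prop:admissible-solutions-iii}--\ref{prop:admissible-solutions-iv}, boundedness of the rates, and the explicit series representation of the minimal solution, rather than the elementary inequalities that suffice elsewhere.
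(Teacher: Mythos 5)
Your treatment of parts~\ref{prop:admissible-solutions-i}--\ref{prop:admissible-solutions-v} is correct and essentially follows the paper's line of argument. The differences are cosmetic: for part~\ref{prop:admissible-solutions-v} you work with the explicit series representation $\rho_k=\frac{\alpha_k}{S}\sum_{j>k}(a_j\alpha_{j-1})^{-1}$ and a Tonelli interchange, whereas the paper simply reads off $a_{k+1}\rho_k>|v_0|$ from~\eqref{eq:v-between-limits} and observes that $\sum_k\rho_k<\infty$ would then force $\sum_k a_k^{-1}<\infty$, contradicting~\eqref{ass:a-inverse-sum}; both routes reach the same conclusion.

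For part~\ref{prop:admissible-solutions-vi}, you correctly establish that the $v$ attached to the given $\rho$ with $\liminf_k\rho_k=0$ equals $0$, and hence that $v_0=0$. (The paper gets this more economically from~\eqref{eq:v-between-limits}: along a subsequence where $\rho_k\to0$, the sandwich $-a_{k+1}\rho_k<v<b_k\rho_k$ with bounded rates forces $v=0$, covering both sign cases at once; your detour via parts~\ref{prop:admissible-solutions-iii}--\ref{prop:admissible-solutions-iv} and the series formula works but is heavier.) However, your final step --- concluding $\cV=\{0\}$ --- is a non-sequitur: the observation that an admissible solution of speed $v'>0$ satisfies $\rho'_k\geq v'/B>0$ does not contradict admissibility, since being bounded away from $0$ is perfectly compatible with lying in $(0,1)$, so nothing in your argument excludes $v'>0\in\cV$. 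This is not a gap you could have filled: the assertion $\cV=\{0\}$ appears to be incorrect as stated. In the paper's own Example~\ref{ex:homogeneous-first} (homogeneous rates with $a<b$), condition~\eqref{ass:bounded-rates} holds, the minimal solution $\rho_k=\alpha_k=(a/b)^k\to0$ is admissible, and yet $\cV=[0,b-a)\neq\{0\}$. The paper's own one-line proof of part~\ref{prop:admissible-solutions-vi} overclaims in the same way. The portion of~\ref{prop:admissible-solutions-vi} that is true --- and that is what is actually invoked in the proof of Proposition~\ref{prop:invariant-measure} --- is precisely what you do prove: $v=v_0=0$ for the given $\rho$.
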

 
Note that
Proposition~\ref{prop:admissible-solutions}\ref{prop:admissible-solutions-iii}
combined with Lemma~\ref{lem:customer-walk} 
verifies the characterization of~$v_0$ in terms of the customer
walk~$Q$ given at~\eqref{eq:v0-recurrnce} in Remark~\ref{rem:p0-v0}.

Before the proof of Proposition~\ref{prop:admissible-solutions}, we give some examples. In the first three, 
the $a_i, b_i$ are bounded so that~\eqref{ass:bounded-rates}
holds.
 We show examples with $\cV = \emptyset$ (Example~\ref{ex:homogeneous}),
with $\cV = \{ v_0\}$ for $v_0 =0$ (Example~\ref{ex:lattice-atlas})
and for $v_0 < 0$ (Example~\ref{ex:one-sheep-many-dogs}),
and with~$\cV$ an interval with left endpoint~$v_0$ (with $v_0=0$ in Example~\ref{ex:homogeneous},
and $v_0 <0$ in  Example~\ref{ex:very-fast});
Proposition~\ref{prop:admissible-solutions}\ref{prop:admissible-solutions-i} shows that there are essentially no other possibilities for the form of $\cV$ when~\eqref{ass:bounded-rates} is satisfied.

\begin{example}[Dog and sheep]
\label{ex:lattice-atlas}
Recall the ``dog and sheep'' example described in Example~\ref{ex:lattice-atlas-first},
where~\eqref{eq:dog-sheep-rates} is satisfied. 
Then $\alpha_k = a/b \in (0,1)$, $\beta_k = \frac{k-1}{c} + \frac{1}{b}$, and the customer walk~$Q$
is null recurrent, by Lemma~\ref{lem:customer-walk}. 
Moreover, $\abar < \infty$, $\bbar = \infty$,
$v_0 = 0$, $\cV = \{ 0 \}$, and the unique admissible solution is $\rho = \alpha$.
\end{example}

 \begin{example}[Homogeneous simple exclusion]
\label{ex:homogeneous}
   Recall Examples~\ref{ex:simple-exclusion} and~\ref{ex:homogeneous-first}, where $a_k \equiv a \in (0,\infty)$ and $b_k \equiv b \in (0,\infty)$. 
 Then $\alpha_k = (a/b)^k$ and, if $a \neq b$, $\beta_k = (1 - (a/b)^k)/(b-a)$,
with $\beta_k = k/a$ if $a=b$. 
\begin{itemize}
    \item 
If $a < b$ (so $\abar, \bbar < \infty$), then the customer walk $Q$ is positive recurrent, and $v_0 = 0$.
Solutions $\rho_k = \alpha_k + v \beta_k$ are admissible for $v \in \cV = [ 0, b-a)$.
The minimal solution has $\sum_{k \in \ZP} \alpha_k<\infty$,
while the non-minimal solutions have $\lim_{k \to \infty} \rho_k = \frac{v}{b-a} \in (0,1)$.
\item
If $a \geq b$, then $Q$ is either null recurrent or transient, and one can check that $v_0 = b-a$ but the admissibility condition  fails (since $\alpha_k + v_0 \beta_k \equiv 1$), so that $\cV = \emptyset$. \qedhere
\end{itemize}
\end{example}

\begin{example}[One sheep, many dogs]
\label{ex:one-sheep-many-dogs}
Suppose that $a_1 = b_1 =1$, and $a_k \equiv a$, $b_k \equiv b$ for $k \geq 2$, with $a > b$.
Then $\alpha_k = (a/b)^{k-1}$, $\beta_k = \frac{1-(a/b)^{k-1}}{b-a} + (a/b)^{k-1}$, from which it follows that $v_0 = - ( \frac{a-b}{1+a-b} )$ and $\abar = \bbar = \infty$,
so $\# \cV \leq 1$. The customer walk $Q$ is transient. 
Then $\rho_k = \alpha_k + v_0 \beta_k = \frac{1}{1+a-b} \in (0,1)$ for all $k \in \N$, so this indeed is the unique admissible solution.  Clearly $\sum_{k \in \ZP} \rho_k = \infty$. The corresponding measure $\nu_\rho$ puts no mass on $\bbDB$, so for convergence we are in the setting of Theorem~\ref{thm:local-convergence} (and not Theorem~\ref{thm:finitely-supported-convergence}).
 Theorems~\ref{thm:strong-law} and~\ref{thm:local-convergence} show convergence in the neighbourhood of the leftmost particle, and a strong law with the negative speed~$v_0$.
\end{example}

Here is a (somewhat pathological) example in which there are can be infinitely many $v \in \cV$, of either (or both) signs; necessarily $\abar < \infty$, $\bbar < \infty$, and~\eqref{ass:a-inverse-sum} is violated.

\begin{example}
\label{ex:very-fast}
Suppose that $a_k = a k!$ and $b_k = (k+1)!$, $k \in \N$, where $a \in (0,\infty)$.
Then $a_k/b_k = a/(k+1)$ and $\alpha_k = a^k/(k+1)! = a^k/b_k$,
and so (e.g.~by~\eqref{eq:beta-alpha-frac} below) $\beta_k/\alpha_k = \sum_{j=1}^{k} a^{-j}$.
\begin{itemize}
    \item If $a=1$, then $\beta_k/\alpha_k = k$, then $v_0 = 0$
and $\rho_k = \alpha_k + v \beta_k = (1+ kv )/(k+1)!$ is admissible if and only if $0 \leq v < 1$,
since $k/(k+1)! \leq 1/2$ for $k \in \N$. Thus $\cV = [0,1)$ and for every $v \in \cV$ the corresponding $\rho = \alpha + v \beta$ satisfies $\sum_{k \in \ZP} \rho_k < \infty$. Note that there is no contradiction with Proposition~\ref{prop:admissible-solutions}\ref{prop:admissible-solutions-vi}, since~\eqref{ass:bounded-rates} fails. The customer walk~$Q$ is null recurrent, since $b_k = a_{k+1}$ for all $k \in \N$. 
    \item
    If $a \in (0,1)$, then $\beta_k/\alpha_k = \frac{a^{-k}-1}{1-a}$, so $v_0 = 0$
    and $\rho_k = \alpha_k + v \beta_k = \frac{1}{(k+1)!} ( a^k + v ( \frac{1-a^k}{1-a} ) )$
     is admissible if and only if $0 \leq v < 2-a$, i.e., $\cV = [ 0, 2-a )$.
Since $a_{k+1}/b_k = a <1$, the customer walk~$Q$ is positive recurrent.
     \item If $a>1$, then $\beta_k/\alpha_k = \frac{1-a^{-k}}{a-1}$, so $v_0 = 1-a <0$,
     and solution $\rho_k = \alpha_k + v \beta_k = \frac{1}{(k+1)!} ( a^k + v ( \frac{a^k-1}{a-1} ) )$
     is admissible if and only if $1-a \leq v < 2-a$. Thus $\cV = [1-a,2-a)$. The customer walk~$Q$ is now transient. If $a \in (1,2)$, this gives an example where $\cV$ contains both positive and negative values.
     \end{itemize}
     We do not address here construction of a process with these parameters; one would need to accommodate
     explosion of customers as described in Section~\ref{sec:jackson-terminology} and Remark~\ref{rem:unbounded-rates}.
\end{example}

\begin{proof}[Proof of Proposition~\ref{prop:admissible-solutions}]
It follows from~\eqref{eq:beta-alpha-recursions} that
\begin{equation}
\label{eq:beta-over-alpha-recursion}
 \frac{\beta_{k+1}}{\alpha_{k+1}} = 
 \frac{\beta_{k}}{\alpha_{k}} +  \frac{1}{a_{k+1} \alpha_{k}}  =  \frac{\beta_{k}}{\alpha_{k}}  + \frac{b_1 \cdots b_{k}}{a_1 \cdots a_{k+1}} , \text{ for all } k \in \ZP.\end{equation}
Then, from~\eqref{ass:positive-rates} and~\eqref{eq:beta-over-alpha-recursion}, we see that $\beta_k/\alpha_k$ is strictly increasing, and $\alpha_k/\beta_k$ is strictly decreasing (and non-negative).
Thus $v_0 := - \lim_{k \to \infty} \alpha_k/\beta_k$ exists, and, by monotonicity,
for all $k \in \N$, it holds that 
 $0 \leq | v_0 | < \alpha_k/\beta_k \leq \alpha_1 /\beta_1 = a_1$. Hence $v_0 \in (-a_1, 0]$, as claimed,
and, moreover, $\alpha_k + v_0 \beta_k >0$ for all $k \in \ZP$. 

A consequence of~\eqref{eq:beta-over-alpha-recursion} and the fact that $\beta_0/\alpha_0 = 0$ is that
\begin{equation}
\label{eq:beta-alpha-frac}
\frac{\beta_{k}}{\alpha_{k}} = \sum_{\ell = 0}^{k-1} \left[   \frac{\beta_{\ell+1}}{\alpha_{\ell+1}} -  \frac{\beta_{\ell}}{\alpha_{\ell}} \right] 
= \sum_{\ell = 0}^{k-1} \frac{1}{a_{\ell+1} \alpha_\ell} 
= \sum_{\ell = 0}^{k-1} \frac{1}{b_{\ell+1} \alpha_{\ell+1}} 
. \end{equation}
Hence the following quantities in $[0,+\infty]$ exist (as monotone limits) and are equal:
\[ \lim_{k \to \infty} \frac{\beta_{k}}{\alpha_{k}} = \sum_{k \in \ZP} \frac{1}{a_{k+1} \alpha_k}
= \sum_{k \in \ZP}  \frac{1}{b_{k+1} \alpha_{k+1}}.
\]
By~\eqref{eq:v0-def}, $v_0 = 0$ if and only if the preceding display is infinite.
This proves~\ref{prop:admissible-solutions-iii}.

Suppose that $v \in \cV$ with $v > v_0$. Then $\alpha_k + v_0 \beta_k < \alpha_k + v \beta_k < 1$ for all $k \in \ZP$, since $v \in \cV$, and so
\begin{equation}
    \label{eq:v0-minimal}
    \text{if $v \in \cV$ with $v > v_0$, then $v_0 \in \cV$.}
\end{equation}
Define $v_\star := \inf \cV$, which, by~\eqref{eq:V-central} satisfies $v_\star \leq 0$ when $\cV \neq \emptyset$.
Suppose that $\cV \neq \emptyset$; we will show that $v_\star = v_0$.
Let $\rho = \alpha + v \beta$, $v \in \cV$, be any admissible solution to~\eqref{eq:stable-traffic};
then $\alpha_k + v \beta_k >0$ for all $k \in \ZP$,
i.e., $\alpha_k / \beta_k > -v$ for all $k \in \ZP$. 
In particular,
$- v_0 = \liminf_{k \to \infty} ( \alpha_k / \beta_k ) \geq -v $ for every $v \in \cV$, and hence $v_0 \leq v_\star$.
Moreover,
by definition of $v_\star$, there exists $v > v_\star \geq v_0$ with $v \in \cV$, and hence
from~\eqref{eq:v0-minimal} we conclude that $v_0 \in \cV$. 
Thus $v_\star \leq v_0$, and so in fact $v_0 = v_\star$.
We have thus established that
\begin{equation}
    \label{eq:v0-minimal-2}
    \text{if and only if $\cV \neq \emptyset$, it holds that $v_0 = \inf \cV$ and $v_0 \in \cV$}.
\end{equation}
Suppose that $v_0 \in \cV$, and $v > v_0$. Then
$v \in \cV$ if and only if 
$v < (1-\alpha_k)/\beta_k$.
Let
\[ v_2 := \inf_{k \in \N} \frac{1-\alpha_k}{\beta_k} 
\leq \liminf_{k \to \infty} \frac{1}{\beta_k} - \lim_{k \to \infty } \frac{\alpha_k}{\beta_k}
= v_0 + \bigl( 1 / \bbar \bigr)  ,
\]
where $\bbar$ is given at~\eqref{eq:beta-bar-def}. 
If $\bbar = \infty$, then $v_2 = v_0$ and so $\cV \subseteq \{ v_0\}$.  Moreover,
 since $a_1 \beta_k > \alpha_k$ for all $k \in \ZP$, it is immediate that $\bbar$ is infinite whenever $\abar$ is infinite. This verifies the  sequence of implications in~\ref{prop:admissible-solutions-ii}.
In addition, by~\eqref{eq:V-interval} and~\eqref{eq:v0-minimal-2}, either $\cV = \emptyset$,
$\cV = \{ v_0\}$, or $\cV$ is an interval closed at least at the left endpoint ($v_0$) and with right endpoint $v_1$ with $v_0 < v_1 \leq v_0 + (1/\bbar)$; this proves part~\ref{prop:admissible-solutions-i}. 

We next prove part~\ref{prop:admissible-solutions-iv}.
To do so, note that there exists $k_0 \in \N$ such that $\alpha_k \leq 2 \abar$ for all $k \geq k_0$,
and so $\sum_{k \in \ZP} ( a_{k+1} \alpha_k )^{-1} \geq (2 \abar)^{-1} \sum_{k \geq k_0} a_{k+1}^{-1}$
whenever $\abar < \infty$. 
From part~\ref{prop:admissible-solutions-iii} (proved earlier) we then observe that
\begin{align}
\label{eq:a-inverse-sum-consequence}
\text{if~\eqref{ass:a-inverse-sum} holds, then either $v_0 = 0$ or $\abar = \infty$.}
\end{align}
Combining~\eqref{eq:a-inverse-sum-consequence} 
with part~\ref{prop:admissible-solutions-ii}, we obtain part~\ref{prop:admissible-solutions-iv}.

Next, suppose there exists an admissible solution $\rho = \alpha + v \beta$ for which $\sum_{k \in \ZP} \rho_k < \infty$. 
If~$\rho$ is admissible, 
 it follows from~\eqref{eq:v-difference} and the fact that $\rho_k >0$ for all $k$, that
 \begin{equation}
 \label{eq:v-between-limits}
  -a_{k+1}\rho_k < v < b_k\rho_k, \text{ for every } k \in \ZP.
 \end{equation}
 Since $v_0 \in \cV$ and $v_0 \leq 0$, we have from~\eqref{eq:v-between-limits} that $a_{k+1} \rho_k > | v_0 |$. If $v_0 < 0$, this means that $\sum_{k \in \ZP} \rho_k < \infty$
implies $\sum_{k \in \ZP} (1/a_k) < \infty$, which contradicts~\eqref{ass:a-inverse-sum}. Thus $v_0 = 0$.
Hence for $v \in \cV$, we have $v \geq 0$ and $0 < \alpha_k = \rho_k - v \beta_k \leq \rho_k$, meaning that
$\sum_{k \in \ZP} \rho_k < \infty$ implies that $\sum_{k \in \ZP} \alpha_k < \infty$ as well. 
This establishes part~\ref{prop:admissible-solutions-v}.
Finally, suppose that~\eqref{ass:bounded-rates} holds and 
there exists an admissible solution $\rho = \alpha + v \beta$ for which $\liminf_{k \to \infty} \rho_k = 0$. 
Then it follows from~\eqref{eq:v-between-limits} that $v =0$ for every $v \in \cV$.
\end{proof}
 
Finally, we give the proof of
Proposition~\ref{prop:admissible-solutions-first}.

\begin{proof}[Proof of Proposition~\ref{prop:admissible-solutions-first}]
Existence of $v_0$ given by~\eqref{eq:v0-def}
is established in Proposition~\ref{prop:admissible-solutions},
as is the fact that whenever $\cV \neq \emptyset$, it holds that
$\inf \cV = v_0 \in \cV$. Moreover,
from~\eqref{eq:beta-over-alpha-recursion} and~\eqref{ass:positive-rates}
we have that $\alpha_k / \beta_k$ is strictly decreasing, so that $0 < \alpha_k + v_0 \beta_k$
for all $k \in \ZP$ holds automatically. Thus $v_0 \in \cV$ if and only if $\alpha_k + v_0 \beta_k < 1$ for all $k$, i.e., $v_0 < (1-\alpha_k)/\beta_k$ for all $k$, which, by~\eqref{eq:alpha-k-def} and~\eqref{eq:beta-k-def}, is equivalent to~\eqref{eq:v0-admissible}.
\end{proof}

\section{Invariant measures, domination,  and convergence}
\label{sec:invariant-measures}

\subsection{Invariant measures and finitely-supported convergence}
 
Recall the definition of the product-geometric measure $\nu_\rho$ on $\bbD = \ZP^\N$
from~\eqref{eq:df_measure_rho},
and that, by Proposition~\ref{prop:invariant-measure}, the~$\nu_\rho$, for admissible~$\rho$,
serve as stationary measures.
The main subject of this section is to demonstrate that,
 under appropriate conditions,
there is convergence to the~$\nu_\rho$ associated with the minimal admissible~$\rho$;
we give proofs of 
Theorems~\ref{thm:finitely-supported-convergence} (in this section) and~\ref{thm:local-convergence} (in Section~\ref{sec:convergence} below).

Recall the definition of $\bbDB$ from~\eqref{eq:D-F-def}.
For an admissible~$\rho$, we say that the corresponding measure~$\nu_\rho$
    defined by~\eqref{eq:df_measure_rho} is \emph{supported on finite configurations} if~$\nu_\rho ( \bbDB ) = 1$.
    The next lemma gives a simple dichotomy based on the following condition:
\begin{equation}
\label{eq:rho_summable}
 \sum_{k \in \ZP} \rho_k < \infty.
\end{equation}
Write $\IE_{\nu_\rho}$ for expectation corresponding to the probability measure~$\nu_\rho$.

\begin{lemma}
\label{lem:stationary-mean}
    Suppose that $\rho$ is admissible. If~\eqref{eq:rho_summable} holds, then $\IE_{\nu_\rho}  \sum_{k \in \N} \eta_k < \infty$,
    and hence $\nu_\rho ( \bbDB ) = 1$;
    otherwise it holds that    $\nu_\rho ( \bbDB ) = 0$.
\end{lemma}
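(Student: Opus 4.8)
The plan is to use the product structure of $\nu_\rho$: under $\nu_\rho$ the coordinates $(\eta_k)_{k\in\N}$ are independent, with $\eta_k \sim \geo{1-\rho_k}$, so that for each $k$ one has $\IE_{\nu_\rho}\eta_k = \rho_k/(1-\rho_k)$ and $\nu_\rho(\{m \in \bbD : m_k \neq 0\}) = \rho_k$. Note that admissibility guarantees $\rho_k \in (0,1)$ for all $k \in \N$, which is what makes these quantities finite and strictly between $0$ and $1$. The statement then splits into two complementary tail arguments.

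For the first assertion, suppose~\eqref{eq:rho_summable} holds. Then $\rho_k \to 0$, so there is $k_0 \in \N$ with $\rho_k \leq 1/2$, and hence $\rho_k/(1-\rho_k) \leq 2\rho_k$, for all $k \geq k_0$. By Tonelli's theorem applied to the non-negative summands,
\[
 \IE_{\nu_\rho} \sum_{k\in\N} \eta_k = \sum_{k\in\N} \IE_{\nu_\rho}\eta_k \leq \sum_{k=1}^{k_0-1} \frac{\rho_k}{1-\rho_k} + 2\sum_{k\geq k_0} \rho_k < \infty .
\]
A non-negative random variable with finite expectation is a.s.\ finite, so $\sum_{k\in\N}\eta_k < \infty$ holds $\nu_\rho$-a.s., which is precisely $\nu_\rho(\bbDB) = 1$.

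For the second assertion, suppose instead that $\sum_{k\in\N}\rho_k = \infty$ (equivalently, that~\eqref{eq:rho_summable} fails, since $\rho_0 = 1$). I would observe that $\bigl\{ \sum_{k}\eta_k < \infty \bigr\} \subseteq \{\eta_k = 0 \text{ for all large }k\}$, and that the events $\{\eta_k \neq 0\}$ are independent with $\nu_\rho(\eta_k \neq 0) = \rho_k$. Since $\sum_k \rho_k = \infty$, the second Borel--Cantelli lemma yields $\eta_k \neq 0$ for infinitely many $k$, $\nu_\rho$-a.s., hence $\sum_k \eta_k = \infty$ a.s., i.e., $\nu_\rho(\bbDB) = 0$. (Equivalently, $\{\sum_k \eta_k < \infty\}$ is a tail event for the independent sequence $(\eta_k)$, so by Kolmogorov's zero--one law it has probability $0$ or $1$, and the Borel--Cantelli computation rules out $1$.)

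The only point requiring a little care is precisely this divergent case: one should \emph{not} try to deduce $\nu_\rho(\bbDB) = 0$ from the infinitude of $\IE_{\nu_\rho}\sum_k \eta_k$ alone (an infinite mean does not by itself force an a.s.\ infinite sum), but instead exploit independence via Borel--Cantelli (or, if preferred, the Kolmogorov three-series theorem). Everything else reduces to the elementary computation of the mean and the atom at $0$ for a geometric law, together with Tonelli's theorem.
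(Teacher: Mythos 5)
Your proof is correct and follows essentially the same route as the paper's: compute $\IE_{\nu_\rho}\eta_k = \rho_k/(1-\rho_k)$ from the geometric law, use that $\rho_k\to 0$ under~\eqref{eq:rho_summable} to compare $\sum\rho_k/(1-\rho_k)$ with $\sum\rho_k$, and invoke the second Borel--Cantelli lemma (with $\nu_\rho(\eta_k\geq 1)=\rho_k$ and independence) for the divergent case. The extra remarks you add (Tonelli, the explicit $k_0$, the zero--one-law alternative, the caution about not deducing a.s.\ infiniteness from an infinite mean) are all sound but are elaborations of the same argument.
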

\begin{proof}
The distribution $\geo{q}$ has mean $\frac{1-q}{q}$ (recall it lives on $\ZP$), so the component $\geo{1-\rho_k}$ of $\nu_\rho$ given by~\eqref{eq:df_measure_rho} has mean $\frac{\rho_k}{1-\rho_k}$. Thus
\begin{equation}
\label{eq:stationary-mean}
\IE_{\nu_\rho}  \sum_{k \in \N} \eta_k 
= \sum_{k \in \N} \frac{\rho_k}{1-\rho_k}. 
\end{equation}
Note that~\eqref{eq:rho_summable}  implies $\lim_{k \to \infty} \rho_k =0$. 
    If $\rho$ is admissible, so that~$\rho_k \in (0,1)$
for all $k \in \N$,
then $\sum_{k \in \N} \frac{\rho_k}{1-\rho_k} < \infty$
if and only if~\eqref{eq:rho_summable} holds, and so~\eqref{eq:stationary-mean} yields the first half of the statement in the lemma. 
Moreover, $ {\nu_\rho} \{ \eta \in \bbD : \eta_k \geq 1 \} = \rho_k$,
and, under $\nu_\rho$, the $\eta_k$ are independent; the Borel--Cantelli lemma then 
verifies the second half of the statement.
\end{proof}

Define
\begin{equation}
    \label{eq:VF}
    \cVF := \Bigl\{ v \in \cV : \sum_{k \in \ZP} ( \alpha_k + v \beta_k ) < \infty \Bigr\}. 
\end{equation}
In view of Lemma~\ref{lem:stationary-mean},
  the $\rho = \alpha + v \beta$ with $v \in \cVF$ are the admissible solutions corresponding to \emph{finitely supported} measures. The following observations are straightforward.

\begin{lemma}
\label{lem:finitely-supported}
Suppose that~\eqref{ass:positive-rates} holds. 
Exactly one of the following holds.
\begin{enumerate}[label=(\roman*)]
 \item
 \label{lem:finitely-supported-i}
  $\sum_{k \in \ZP} \alpha _k < \infty$ and $\sum_{k \in \ZP} \beta_k < \infty$, and $\cVF = \cV$.
    \item
     \label{lem:finitely-supported-ii}
$\sum_{k \in \ZP} \alpha_k < \infty$
    and $\sum_{k \in \ZP} \beta_k = \infty$, and $\cVF = \cV \cap \{ 0 \}$.
    \item
     \label{lem:finitely-supported-iii}
$\sum_{k \in \ZP} \alpha_k  = \sum_{k \in \ZP} \beta_k = \infty$, and either $\cVF = \emptyset$, or $\cVF = \{ v \}$ for some $v_0 \leq v < 0$.
\end{enumerate}
Moreover, if~\eqref{ass:a-inverse-sum} also holds, then $\sum_{k \in \ZP} \alpha_k < \infty$
implies $\# \cVF \leq 1$, while  $\sum_{k \in \ZP} \alpha_k = \infty$
implies $\cVF = \emptyset$.
\end{lemma}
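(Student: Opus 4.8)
The plan is to analyze the partial sums $\sum_{k \in \ZP} \alpha_k$ and $\sum_{k \in \ZP} \beta_k$ and use the structural results already in hand, namely Lemma~\ref{lem:solution-space} (the solution space is $\rho = \alpha + v\beta$) and Proposition~\ref{prop:admissible-solutions} (the structure of $\cV$, the minimality of $v_0$, and parts~\ref{prop:admissible-solutions-iii}--\ref{prop:admissible-solutions-v}). Since both $\alpha_k \geq 0$ and $\beta_k \geq 0$, the three cases listed are clearly exhaustive and mutually exclusive once we observe that $\sum \alpha_k = \infty$ forces $\sum \beta_k = \infty$: indeed $\beta_k > \alpha_k / a_1$ for all $k$ (this follows from $\alpha_k/\beta_k < a_1$, established in the proof of Proposition~\ref{prop:admissible-solutions}, or trivially from $\beta_1 = 1/b_1$ and the recursions~\eqref{eq:beta-alpha-recursions}), so case~(iii) correctly requires both sums infinite and there is no fourth case with $\sum\alpha_k = \infty$, $\sum\beta_k < \infty$.

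For each case I would identify $\cVF$ by combining the membership condition $\sum_{k} (\alpha_k + v\beta_k) < \infty$ with the interval structure of $\cV$. In case~\ref{lem:finitely-supported-i}, both $\sum \alpha_k < \infty$ and $\sum \beta_k < \infty$, so $\sum_k (\alpha_k + v\beta_k) < \infty$ for every $v \in \R$, whence $\cVF = \cV$ trivially. In case~\ref{lem:finitely-supported-ii}, $\sum \alpha_k < \infty$ but $\sum \beta_k = \infty$, so $\sum_k(\alpha_k + v\beta_k) < \infty$ holds if and only if $v = 0$ (for $v \neq 0$ the $\beta$-term dominates and diverges, since $\beta_k \geq 0$); hence $\cVF = \cV \cap \{0\}$. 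In case~\ref{lem:finitely-supported-iii}, $\sum\alpha_k = \sum\beta_k = \infty$; for $v \in \cVF$ we need the \emph{cancellation} $\sum_k(\alpha_k + v\beta_k) < \infty$ despite both pieces diverging, which forces $v < 0$ (if $v \geq 0$ then $\alpha_k + v\beta_k \geq \alpha_k$ and the sum diverges) and also $v \geq v_0$ since $v \in \cV$; moreover at most one such $v$ can work, because if $\alpha_k + v\beta_k$ and $\alpha_k + v'\beta_k$ were both summable with $v \neq v'$ then their difference $(v-v')\beta_k$ would be summable, contradicting $\sum\beta_k = \infty$. This gives $\cVF = \emptyset$ or $\cVF = \{v\}$ with $v_0 \leq v < 0$.

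For the final sentence, assume additionally~\eqref{ass:a-inverse-sum}. If $\sum_{k\in\ZP}\alpha_k < \infty$ we are in case~\ref{lem:finitely-supported-i} or~\ref{lem:finitely-supported-ii}, and in either case $\cVF \subseteq \cV \cap \{0\}$ in case~\ref{lem:finitely-supported-ii} (so $\#\cVF \leq 1$), while in case~\ref{lem:finitely-supported-i} we have $\sum\beta_k < \infty$, which combined with $\sum\alpha_k<\infty$ means $\abar = \bbar = 0$; but then Proposition~\ref{prop:admissible-solutions}\ref{prop:admissible-solutions-iii} applies: $\bbar = 0 < \infty$ is not directly the hypothesis there, so instead I would argue that $\sum\beta_k < \infty$ forces $\beta_k \to 0$, hence (since $\beta_k/\alpha_k$ is strictly increasing and $\beta_1/\alpha_1 = 1/a_1 > 0$) we would need $\alpha_k \to 0$ faster still; here the cleanest route is to invoke~\eqref{ass:a-inverse-sum} together with Proposition~\ref{prop:admissible-solutions}\ref{prop:admissible-solutions-iv}, which already gives $\#\cV \leq 1$ unless $v_0 = 0$, and if $v_0 = 0$ then $\cVF \subseteq \cV$ with any $v \in \cV$ having $v \geq 0$, so $\alpha_k + v\beta_k$ summable forces $v\sum\beta_k < \infty$; since~\eqref{ass:a-inverse-sum} and $\sum\alpha_k<\infty$ force $\sum\beta_k = \infty$ via $\beta_k \geq \sum_{\ell<k}1/(a_{\ell+1}\alpha_\ell) \cdot \alpha_k$ — no, rather, the identity~\eqref{eq:beta-alpha-frac} gives $\beta_k/\alpha_k = \sum_{\ell=0}^{k-1} 1/(a_{\ell+1}\alpha_\ell) \to \infty$ is not immediate, so the honest statement is: $\sum\alpha_k < \infty$ with~\eqref{ass:a-inverse-sum} places us in case~\ref{lem:finitely-supported-ii} (case~\ref{lem:finitely-supported-i} being excluded because $\sum\beta_k < \infty$ with $\beta_k \geq \beta_1 = 1/b_1 > 0$ would need... — I would verify case~\ref{lem:finitely-supported-i} is incompatible with~\eqref{ass:a-inverse-sum} directly), giving $\#\cVF \leq 1$. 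If instead $\sum_{k\in\ZP}\alpha_k = \infty$, then~\eqref{eq:a-inverse-sum-consequence} (established within the proof of Proposition~\ref{prop:admissible-solutions}) combined with~\eqref{ass:a-inverse-sum} forces... — actually the clean statement is that $\sum\alpha_k = \infty$ puts us in case~\ref{lem:finitely-supported-iii}, where $\cVF$ is empty or a singleton $\{v\}$ with $v < 0$; but~\eqref{ass:a-inverse-sum} together with Proposition~\ref{prop:admissible-solutions}\ref{prop:admissible-solutions-v} rules out any admissible $\rho$ with $\sum\rho_k < \infty$ unless $v_0 = 0$, and a finitely-supported $\rho$ has $\sum\rho_k<\infty$, contradicting $v < 0$; hence $\cVF = \emptyset$.

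The main obstacle I anticipate is the last clause: threading~\eqref{ass:a-inverse-sum} through to pin down $\#\cVF$ cleanly, since the naive estimates on $\beta_k$ in terms of $\alpha_k$ go the wrong way when $\alpha_k$ is small. The resolution is to not argue about $\beta_k$ at all in that clause, but to push everything through Proposition~\ref{prop:admissible-solutions}\ref{prop:admissible-solutions-v}: a finitely-supported admissible $\rho$ is in particular an admissible $\rho$ with $\sum_k \rho_k < \infty$, so~\eqref{ass:a-inverse-sum} forces $v_0 = 0 \in \cV$ \emph{and} $\sum_k \alpha_k < \infty$; this immediately gives that $\sum_k\alpha_k = \infty$ is incompatible with $\cVF \neq \emptyset$ (so $\cVF = \emptyset$ in that subcase), and when $\sum_k\alpha_k < \infty$ it pins $\cVF$ down to a subset of $\cV \cap \{0\}$ via the case analysis above, hence $\#\cVF \leq 1$. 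I would write the proof of the last clause in exactly this order — first dispose of $\sum\alpha_k = \infty$ using part~\ref{prop:admissible-solutions-v}, then handle $\sum\alpha_k<\infty$ by reducing to cases~\ref{lem:finitely-supported-i}--\ref{lem:finitely-supported-ii}.
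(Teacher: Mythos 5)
Your analysis of the three cases and of the $\sum_{k\in\ZP}\alpha_k = \infty$ clause is essentially the paper's (the paper dismisses the trichotomy as elementary and, for case (iii) under~\eqref{ass:a-inverse-sum}, invokes Proposition~\ref{prop:admissible-solutions}\ref{prop:admissible-solutions-v} exactly as you do). Two points, one minor and one serious.

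First, a small gap in case~(ii): you assert that for $v \neq 0$ the ``$\beta$-term dominates and diverges,'' but this is only true for $v > 0$; for $v<0$ the terms $\alpha_k + v\beta_k$ could in principle be small and summable. The correct argument is that for $v\in\cV$ with $v<0$, admissibility gives $\alpha_k + v\beta_k > 0$, hence $|v|\beta_k < \alpha_k$, so $\sum_k \beta_k \leq |v|^{-1}\sum_k \alpha_k < \infty$, which contradicts the defining hypothesis of case~(ii). You make essentially this argument in case~(iii) (via the difference of two summable $\rho$'s), but it is skipped in case~(ii).

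Second, and more significantly, your treatment of the final clause when $\sum_{k\in\ZP}\alpha_k < \infty$ has a genuine gap. You recognise that case~(ii) gives $\cVF \subseteq \cV\cap\{0\}$, and you then try to dispose of case~(i) by conjecturing that it is incompatible with~\eqref{ass:a-inverse-sum}. That conjecture is false: take $a_k \equiv 1$ (so~\eqref{ass:a-inverse-sum} holds), $b_1 = 2$, $b_k = k^2$ for $k \geq 2$; then $\alpha_k = (2\,(k!)^2)^{-1}$ is summable and $\beta_k \sim k^{-2}$ is summable too, so one is squarely in case~(i). Your purely algebraic route via Proposition~\ref{prop:admissible-solutions} therefore cannot close case~(i). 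The paper's proof takes a different route here: it observes that when $\sum_k\alpha_k < \infty$, the measure $\nu_\alpha$ is an invariant probability measure for the irreducible countable-state Markov chain $\eta$ on $\bbDB$, whence $\eta$ is positive recurrent and the invariant probability measure is \emph{unique} (citing Norris, Theorem~3.5.3); this uniqueness is then what pins down $\#\cVF \leq 1$. That Markov-chain uniqueness argument is entirely absent from your proposal and is the key step you are missing.
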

\begin{proof}
    First note that, since $\beta_k \geq \alpha_k/a_1$,
    if $\sum_{k \in \ZP} \alpha_k = \infty$
    then $\sum_{k \in \ZP} \beta_k = \infty$ as well. Recalling that $0 \geq v_0 = \inf \cV$,
    it is elementary to establish~\ref{lem:finitely-supported-i}--\ref{lem:finitely-supported-iii}.

Suppose, additionally, that~\eqref{ass:a-inverse-sum} holds.
 Then Proposition~\ref{prop:admissible-solutions}\ref{prop:admissible-solutions-v} shows that in
    case~\ref{lem:finitely-supported-iii}, it must be the case that $\cVF = \emptyset$. On the other hand,
    if  $\sum_{k \in \ZP} \alpha_k < \infty$ then $\nu_\alpha$
    is an invariant measure for the irreducible, countable Markov chain~$\eta$ on $\bbDB$,
    and hence $\eta$ is positive recurrent and the invariant measure is unique, by e.g.~Theorem~3.5.3
of~\cite[p.~118]{norris}.
\end{proof}

If~\eqref{ass:bounded-rates} holds,  then Proposition~\ref{prop:admissible-solutions}\ref{prop:admissible-solutions-vi}
says that $\cVF = \cV = \{ 0 \}$ whenever $\cVF$ is non-empty. 
We can now complete
the proofs of Theorem~\ref{thm:finitely-supported-convergence}
and Corollary~\ref{cor:finitely-supported-convergence}.

\begin{proof}[Proof of Theorem~\ref{thm:finitely-supported-convergence}]
Suppose that~\eqref{ass:positive-rates} and~\eqref{ass:a-inverse-sum} hold, that
$\sum_{k \in \ZP} \alpha_k  < \infty$, and $X(0) \in \bbXB$.
By Proposition~\ref{prop:existence-from-heaviside},
the process~$\eta$
is an irreducible Markov chain on the countable state space $\bbDB$,
and, by Lemma~\ref{lem:finitely-supported}, 
the (minimal) admissible solution~$\rho = \alpha$  corresponds to
a unique invariant probability measure~$\nu_\alpha$ for $\eta$. 
Hence $\eta$ is positive recurrent
and irreducible, and convergence follows from the convergence theorem
for continuous-time, countable state-space Markov chains
(e.g.~Theorem~3.6.2
of~\cite[p.~122]{norris}).
\end{proof}

\begin{proof}[Proof of Corollary~\ref{cor:finitely-supported-convergence}]
Under the hypotheses of Corollary~\ref{cor:finitely-supported-convergence},
we have that $X(t) \in \bbXB$ for all $t \in \RP$,
and so, by~\eqref{eq:conservation}, we have that
 $X_1 (t) - X_1 (0) = - \sum_{k\in\N} (\eta_k (t) - \eta_k (0))$.
 Since $\eta$ is a positive-recurrent Markov chain
 and the unique invariant measure $\nu_\rho$
 is such that $\nu_\rho ( \bbDB ) = 1$, 
 the ergodic theorem for countable Markov chains in continuous time
 (e.g.~Theorem~3.8.1
of~\cite[p.~126]{norris})
 completes the proof.
 \end{proof}

\subsection{Stochastic domination and second-class customers}
\label{sec:stochastic-domination}

We now turn towards the situation where the minimal admissible solution~$\rho$ 
has $\sum_{k \in \ZP} \rho_k = \infty$,
so that,
 although the process 
 may start from $\eta(0) \in \bbDB$,
Lemma~\ref{lem:stationary-mean} shows that the candidate measure $\nu_\rho$ for convergence is supported on $\bbD \setminus \bbDB$,
and so Theorem~\ref{thm:finitely-supported-convergence} (based on convergence of
countable Markov chains) is not applicable. Here we need some additional arguments,
involving some
stochastic monotonicity, and the concept of \emph{second-class customers}, which will be used in the proof of Theorem~\ref{thm:local-convergence} and elsewhere.

Write $\cP ( \bbD )$ for the set of probability measures on $\bbD$.
Recall the definition of $\nu_\rho \in \cP (\bbD)$ from~\eqref{eq:df_measure_rho}. 
Let 
\[ \cP_\cV ( \bbD) := \{ \nu_\rho : \rho = \alpha + v \beta, v \in \cV \}  ,\]
the set of product-geometric stationary measures on $\bbD$
corresponding to~$\cV$. 
In this section we assume hypothesis~\eqref{ass:bounded-rates};
recall that under this condition, we have (see Proposition~\ref{prop:invariant-measure}) constructed the process~$\eta$ to start from an arbitrary initial state
$\eta (0) \in \bbD$.
For two probability measures $\nu, \nu'$ on $\bbD$, we write
$\nu \stochdoml \nu'$ (equivalently, $\nu' \stochdomg \nu$)
to mean that $\nu$
is stochastically dominated by $\nu'$, i.e.,
$\nu_k [r, \infty) \leq \nu'_k [r,\infty)$
for all $k \in \N$ and all $r \in \ZP$.

\begin{proposition}[Stochastic monotonicity]
\label{prop:stochastic-monotonicity}
Suppose that~\eqref{ass:bounded-rates} holds.
Let $\nu_1, \nu_2 \in \cP ( \bbD)$, and suppose that $\nu_1 \stochdoml \nu_2$.
Then the law of the process $\eta$ started from $\eta(0) \sim \nu_1$
is stochastically dominated by the law of the process $\eta$ started from $\eta(0) \sim \nu_2$.
\end{proposition}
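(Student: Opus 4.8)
The plan is a standard coupling (``attractiveness'') argument carried out on the Harris graphical construction already used in the proof of Proposition~\ref{prop:invariant-measure}. First I would fix, independently of the initial data, the array of marked Poisson processes: the clock at queue $k \in \ZP$ emits left-pointing marks at rate $b_k$ and right-pointing marks at rate $a_{k+1}$, and the arrival clock at queue $1$ emits marks at rate $a_1$. As shown in that proof, under~\eqref{ass:bounded-rates} this data a.s.\ induces, on every compact time interval, a decomposition of $\ZP$ into finite blocks separated by queues carrying no marks, so the dynamics is well defined for all $t \in \RP$ simultaneously for every initial configuration in $\bbD$ (and distinct clocks fire at distinct times a.s., so the update rule is unambiguous). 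I would then run two copies $\eta^{(1)}$ and $\eta^{(2)}$ off this same clock array, with $\eta^{(1)}(0)\sim\nu_1$ and $\eta^{(2)}(0)\sim\nu_2$ coupled so that $\eta^{(1)}(0)\preceq\eta^{(2)}(0)$ coordinatewise a.s.; such a coupling of the initial data is furnished by the hypothesis $\nu_1\stochdoml\nu_2$ (coordinatewise quantile couplings taken independently across coordinates when the $\nu_i$ are product measures, and by Strassen's theorem in general).

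The core step is to verify that the partial order $\eta^{(1)}(t)_k \le \eta^{(2)}(t)_k$ for all $k\in\N$ is preserved at every clock ring; since the construction has no explosion, it then persists for all $t\in\RP$ by composing monotone maps along the (locally finite) sequence of marks. This is a short case analysis on the effect of a mark at queue $k$: a left mark moves a customer $k\to k-1$ if queue $k$ is non-empty, a right mark moves a customer $k\to k+1$ if queue $k$ is non-empty, and an arrival mark adds a customer at queue $1$. Writing each update as a deterministic map applied to both copies, monotonicity is immediate when the two copies are both non-empty or both empty at queue $k$, and the arrival map is trivially monotone. The only case needing a word is the straddle $\eta^{(1)}_k = 0 < \eta^{(2)}_k$: then only the upper copy moves a customer out of queue $k$, leaving $\eta^{(1)}_k = 0 \le \eta^{(2)}_k - 1$ and raising $\eta^{(2)}_{k\pm1}$ above the unchanged $\eta^{(1)}_{k\pm1}$, so the order is maintained. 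The opposite straddle $\eta^{(1)}_k > 0 = \eta^{(2)}_k$ is impossible under the standing order. (It is precisely because, in the queue/zero-range formulation, a departure from queue $k$ depends only on whether queue $k$ is empty, and not on its neighbours, that the dynamics is manifestly attractive.)

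Given this, the coupled processes satisfy $\eta^{(1)}(t)\preceq\eta^{(2)}(t)$ coordinatewise, a.s., for every $t\in\RP$; taking marginals gives $\mathrm{Law}(\eta^{(1)}(t))\stochdoml\mathrm{Law}(\eta^{(2)}(t))$ for each $t$, and more generally $\IE f(\eta^{(1)})\le\IE f(\eta^{(2)})$ for every bounded increasing functional $f$ of the path, which is the asserted domination of the two laws. I expect the only real obstacle to be bookkeeping: running the graphical construction \emph{simultaneously for two initial conditions} while invoking the finite-block decomposition from Proposition~\ref{prop:invariant-measure} and checking that the coupled process is a genuine version of each marginal dynamics. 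The monotonicity case analysis itself is routine once the picture is set up, as this nearest-neighbour zero-range system is a textbook example of an attractive interacting particle system.
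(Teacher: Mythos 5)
Your proof is correct, and in substance it is the paper's argument: both are the basic coupling, realised on a common Harris graphical clock array, and both begin with the same Strassen-type coupling of the initial conditions. The difference is in how the coordinatewise order is propagated. You verify it by a direct case analysis on what each mark does, the only case needing a remark being the straddle $\eta^{(1)}_k = 0 < \eta^{(2)}_k$, which you handle correctly. The paper instead packages the same fact in the language of first- and second-class customers: the customers of the smaller configuration are declared first-class, the surplus second-class, and first-class customers have strict service priority. Then the first-class occupation numbers evolve exactly as the smaller process (first-class dynamics is oblivious to second-class customers), the total occupation numbers evolve exactly as the larger process, and the pointwise order $\eta(t)\le\eta'(t)$ is automatic rather than a lemma; your straddle case corresponds precisely to a service at a queue holding second-class but no first-class customers. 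The priority formulation buys a cleaner statement (no case analysis) and introduces a vocabulary the paper reuses later, notably in the proof of Proposition~\ref{prop:finite-coupling}, which is why it is front-loaded here; your more explicit rendition is an equally complete proof.
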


The preceding proposition together with Proposition~\ref{prop:invariant-measure}
(invariant measures) yields the following immediate corollary.

\begin{corollary}
\label{cor:domination}
Suppose that~\eqref{ass:bounded-rates} holds,
 that~$\nu_\rho \in \cP_\cV (\bbD)$, and that $\eta(0) \sim \nu \in  \cP ( \bbD)$.
\begin{itemize}
\item If $\nu \stochdoml \nu_\rho$, then
the law of the process $\eta$ started from $\eta(0) \sim \nu$
is stochastically dominated by the law of the (stationary) process $\eta$ started from $\eta(0) \sim \nu_\rho$.
\item If $\nu \stochdomg \nu_\rho$, then 
the law of the process $\eta$ started from $\eta(0) \sim \nu$
 stochastically dominates the law of the (stationary) process $\eta$ started from $\eta(0) \sim \nu_\rho$.
\end{itemize}
\end{corollary}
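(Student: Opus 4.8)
The plan is to read the corollary directly off Proposition~\ref{prop:stochastic-monotonicity}, which carries all the weight, so the ``proof'' is really just bookkeeping: identify which of $\nu$, $\nu_\rho$ plays the role of $\nu_1$ and which of $\nu_2$ in each of the two bullets, plus the minor observation that a process launched from $\nu_\rho \in \cP_\cV(\bbD)$ is the stationary one (which is what justifies the parenthetical ``(stationary)'').

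First I would note that all the processes appearing are well-defined: under~\eqref{ass:bounded-rates} the graphical construction used in the proof of Proposition~\ref{prop:invariant-measure} produces $\eta$ from an arbitrary initial law in $\cP(\bbD)$, in particular from $\nu_\rho$ and from $\nu$. Moreover, since $\nu_\rho \in \cP_\cV(\bbD)$ the corresponding solution $\rho = \alpha + v\beta$ (some $v \in \cV$) is admissible, so Proposition~\ref{prop:invariant-measure} gives $\eta(t) \sim \nu_\rho$ for every $t \in \RP$ whenever $\eta(0) \sim \nu_\rho$; that is the sense in which the reference process is stationary (this is used in later arguments but plays no role in the domination itself).

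Then the two bullets are simply the two orientations of Proposition~\ref{prop:stochastic-monotonicity}. For the first, I would apply the proposition with $\nu_1 := \nu$ and $\nu_2 := \nu_\rho$: the hypothesis $\nu \stochdoml \nu_\rho$ is exactly $\nu_1 \stochdoml \nu_2$, and the conclusion is precisely that the law of $\eta$ started from $\nu$ is stochastically dominated by the law of $\eta$ started from $\nu_\rho$. For the second, I would apply it with $\nu_1 := \nu_\rho$ and $\nu_2 := \nu$, using $\nu \stochdomg \nu_\rho$, i.e.\ $\nu_\rho \stochdoml \nu$; the resulting domination of the $\nu_\rho$-process by the $\nu$-process is the same statement as the $\nu$-process dominating the $\nu_\rho$-process.

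I do not expect any obstacle here: the corollary is a formal consequence of Proposition~\ref{prop:stochastic-monotonicity} and the invariance recorded in Proposition~\ref{prop:invariant-measure}. Any genuine difficulty lies upstream, in proving Proposition~\ref{prop:stochastic-monotonicity} by a monotone coupling — running both copies of $\eta$ on the same array of marked Poisson processes, coupling their initial configurations coordinatewise (possible by Strassen's theorem since $\nu_1 \stochdoml \nu_2$), and checking that each admissible move preserves the coordinatewise partial order on $\bbD$, the only case needing thought being an arrival, departure, or internal routing that could a priori reverse an inequality, which it does not since the ordering of holes (customers) between particles is preserved. That argument belongs to the proof of the proposition, not of this corollary.
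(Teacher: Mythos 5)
Your proof is correct and matches the paper exactly: the paper presents Corollary~\ref{cor:domination} as an immediate consequence of Proposition~\ref{prop:stochastic-monotonicity} together with Proposition~\ref{prop:invariant-measure}, with no further argument given, and your bookkeeping ($\nu_1 := \nu$, $\nu_2 := \nu_\rho$ for the first bullet, swapped for the second) is precisely what is being left implicit. Your remarks on well-definedness of the process and on why $\nu_\rho$ gives the stationary reference process are accurate supporting observations consistent with the paper's framing.
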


The proof of  Proposition~\ref{prop:stochastic-monotonicity} uses a coupling construction,
generally referred to as the \emph{basic coupling}, which 
was observed early on in the theory of interacting particle systems; in
the zero-range context, the idea can be found already in~\cite[\S 2]{andjel82}. 

The version of the coupling construction we use here is most naturally framed in the queueing framework, through the concept of \emph{second-class customers}.
We divide the customers
into two classes, the first and the second; when queueing,
the first-class customers always have priority
over the second-class ones (i.e., no second-class customer
can be served as long as there are first-class customers
in the queue). The (exogeneous) customers that enter the system
at positive times are always first-class.

\begin{proof}[Proof of Proposition~\ref{prop:stochastic-monotonicity}]
Suppose that  $\nu_1 \stochdoml \nu_2$. Then we construct on a suitable probability space a
random element $(\eta(0), \eta'(0)) \in \bbD \times \bbD$ such that $\eta(0) \sim \nu_1$,
$\eta'(0) \sim \nu_2$, and $\eta(0) \leq \eta'(0)$ pointwise, i.e., $\eta_k(0) \leq \eta'_k (0)$
for all $k \in \N$. Declare all of the (endogenous) customers in $\eta(0)$ first class,
and all of those in $\eta'(0) - \eta(0)$ second class.
Let $\eta$ be the process that tracks first-class customers only, and $\eta'$ the process that tracks both classes of customers. The dynamics of first-class customers is oblivious to the second-class customers,
and the $\eta$ system only has first-class customers, so $\eta$
is an honest copy of the process started from $\eta(0)$.
On the other hand, ignoring customer classes, $\eta'$
is an honest copy of the process started from $\eta'(0)$.
By construction, $\eta(t) \leq \eta'(t)$ for all $t \in \RP$.
\end{proof}

\subsection{Comparison with finite systems}
\label{sec:finite-systems}

We use a truncation
idea to couple the semi-infinite
particle system~$\xi$ with large finite systems
of the kind studied in~\cite{mmpw}.
First, for convenience,
we collect the results from~\cite{mmpw} that we need here.
The finite system of $N+1$ particles (hence $N$ queues)
has parameters $a_1, b_1, \ldots, a_{N+1}, b_{N+1}$,
having the same meaning as in Section~\ref{sec:model},
so that particle~$k$ ($1 \leq k \leq N+1$)
performs a nearest neighbour random walk, rate~$a_k$ to the left and $b_k$ to the right,
subject to the exclusion rule. The Jackson network correspondence is similar to the
described in Section~\ref{sec:jackson-terminology} (see~\cite[\S 3]{mmpw}), but now
customers may also enter or depart the system from queue~$N$ (at rates $b_{N+1}$ and $a_{N+1}$, respectively). We denote by $X_{N,k} (t)$ the location of particle $k$ at time $t$, and by $\eta_{N,k} (t) := X_{N,k+1} (t) - X_{N,k} (t) - 1$ the occupancy of queue~$k$.

Recall the definitions of $\alpha_k$ and $\beta_k$ from~\eqref{eq:alpha-k-def} and~\eqref{eq:beta-k-def}. The following result summarizes the relevant results from~\cite{mmpw},
giving finite-$N$ analogues of the strong law (Theorem~\ref{thm:strong-law}) and convergence result (Theorem~\ref{thm:finitely-supported-convergence}).

\begin{proposition}[Finite stable systems; Corollary~2.6 in~\cite{mmpw}]
\label{prop:finite-cloud}
    Let $N \in \N$ and suppose that $a_k \in (0,\infty)$, $b_k \in \RP$ for $1 \leq k \leq N+1$.
   For $1 \leq k \leq N$, set $\rho_{N,k} := \alpha_k + v_{N} \beta_k$, 
   and
    \begin{equation}
\label{eq:v_explicit_variant} 
 v_{N} := \frac{1-\alpha_{N+1}}{\beta_{N+1}} = 
 \frac{b_1 \cdots b_{N+1}- a_1\cdots a_{N+1}}
 {b_1\cdots b_N 
 +b_1 \cdots b_{N-1} a_{N+1} +
  \cdots + a_2 \cdots a_{N+1}}.
\end{equation}
Then if $\rho_{N,k} < 1$ for all $1 \leq k \leq N$, for every $1 \leq k \leq N+1$, we have 
\[ \lim_{t \to \infty} \frac{X_{N,k} (t)}{t} = v_{N}, \as, \]
 and $\eta_N (t)$ converges in distribution as $t \to \infty$
to $\bigotimes_{1 \leq k \leq N} \geo{ 1-\rho_{N,k}}$.
\end{proposition}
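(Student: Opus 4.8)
The plan is to obtain this statement from the finite-system theory of~\cite{mmpw} (it is Corollary~2.6 there), via the Jackson-network correspondence of Section~\ref{sec:jackson-terminology} adapted to a system of $N+1$ particles ($N$ queues); I sketch that route. First I would set up the finite open Jackson network: queues $1,\dots,N$; service rate $\mu_k=b_k+a_{k+1}$ at queue~$k$; exogenous Poisson arrivals of rate $a_1$ at queue~$1$ and of rate $b_{N+1}$ at queue~$N$; nearest-neighbour routing $P_{k,k-1}=b_k/\mu_k$ and $P_{k,k+1}=a_{k+1}/\mu_k$, with exit probabilities $\delta_1=b_1/\mu_1$ and $\delta_N=a_{N+1}/\mu_N$. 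As in Section~\ref{sec:jackson-terminology}, $\eta_N=(\eta_{N,1},\dots,\eta_{N,N})$ is the queue-length process of this network; it is an irreducible continuous-time Markov chain on $\ZP^N$ with jump rates bounded above, by $\sum_k\mu_k+a_1+b_{N+1}$, and below, by $a_1>0$ (since particle~$1$ may always jump left, this event being a genuine transition of $\eta_N$ from every state).

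Next I would identify the loads. A one-line flow-balance computation for each coordinate shows that, in any stationary regime, the occupancy probabilities $\rho_{N,k}:=\IP(\eta_{N,k}\ge 1)$ solve the stable traffic equation~\eqref{eq:stable-traffic} for $1\le k\le N$, subject to the two boundary conditions $\rho_{N,0}=1$ and $\rho_{N,N+1}=1$, the latter because particle $N+1$ may always jump right. By Lemma~\ref{lem:solution-space} this forces $\rho=\alpha+v\beta$ with $v$ determined by $\alpha_{N+1}+v\beta_{N+1}=1$, i.e.\ $v=v_N=(1-\alpha_{N+1})/\beta_{N+1}$, which is the formula~\eqref{eq:v_explicit_variant}; hence $\rho_{N,k}=\alpha_k+v_N\beta_k$. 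Assuming $\rho_{N,k}<1$ for all $1\le k\le N$, Jackson's product-form theorem (see e.g.~\cite[Ch.~2]{ChenYao}), or equivalently a direct check of the balance equations, gives that $\eta_N$ is positive recurrent with unique stationary law $\bigotimes_{1\le k\le N}\geo{1-\rho_{N,k}}$; convergence in distribution is then the ergodic theorem for irreducible positive-recurrent continuous-time Markov chains on a countable state space (e.g.\ \cite[Thm.~3.6.2]{norris}).

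For the strong law I would run a regeneration argument at the empty state $\mathbf 0=(0,\dots,0)\in\ZP^N$. Positive recurrence of $\eta_N$ together with bounded jump rates gives i.i.d.\ cycles with finite mean length, and, by uniformization, a finite expected number of jumps per cycle; since each event shifts $X_{N,k}$ by at most~$1$, the increment of $X_{N,k}$ over a cycle is integrable. The cycle formula identifies the time-stationary law of $\eta_N$ with the cycle-averaged one, so the mean increment of $X_{N,1}$ over a cycle equals $(b_1\rho_{N,1}-a_1)\,\IE[\text{cycle length}]=v_N\,\IE[\text{cycle length}]$, using~\eqref{eq:v-difference} at $k=0$. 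Renewal--reward (the strong law for i.i.d.\ cycle increments, together with $\max_{j\le n}(\cdot)=o(n)$ for the within-cycle fluctuations of $X_{N,1}$) then yields $t^{-1}X_{N,1}(t)\to v_N$ almost surely. Finally, $X_{N,k}(t)=X_{N,1}(t)+(k-1)+\sum_{j=1}^{k-1}\eta_{N,j}(t)$, and a positive-recurrent continuous-time Markov chain with bounded rates grows sublinearly, so $t^{-1}\eta_{N,j}(t)\to 0$ almost surely, whence $t^{-1}X_{N,k}(t)\to v_N$ for every $1\le k\le N+1$.

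The product-form and distributional-convergence parts are routine once the network is in place; I expect the only real work to be the bookkeeping behind the strong law, namely checking the finite expected number of jumps per excursion (via uniformization), matching the cycle-stationary drift of $X_{N,1}$ with $v_N$, and controlling the within-cycle maxima. None of this is deep, but it is where the care is needed.
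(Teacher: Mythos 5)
The paper does not itself prove this proposition: it is stated as an import (``Corollary~2.6 in~\cite{mmpw}'') and used as a black box in the truncation arguments of Sections~\ref{sec:invariant-measures}--\ref{sec:leftmost-particle}. Your proposal reconstructs what is essentially the intended route from~\cite{mmpw}, namely the finite open Jackson network correspondence (exogenous Poisson input at queues $1$ and $N$, exit at both ends), the two boundary conditions $\rho_{N,0}=\rho_{N,N+1}=1$ that pin down $v_N=(1-\alpha_{N+1})/\beta_{N+1}$ via Lemma~\ref{lem:solution-space}, the Jackson product-form stationary law under $\rho_{N,k}<1$, and a regeneration/renewal--reward argument at the empty state combined with~\eqref{eq:v-difference} at $k=0$ to get $t^{-1}X_{N,1}(t)\to v_N$. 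All these steps are correct, and the bookkeeping you flag (finite mean number of jumps per cycle via uniformization; within-cycle max $o(n)$; $t^{-1}\eta_{N,j}(t)\to0$ to pass from $X_{N,1}$ to $X_{N,k}$) is exactly where the care is needed and is handled soundly. So your argument is a valid proof, matching the expected approach rather than offering a genuinely different one.
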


Since $\alpha_k/\beta_k$ is
strictly decreasing (see~\eqref{eq:beta-over-alpha-recursion}), $v_N$ defined by~\eqref{eq:v_explicit_variant} satisfies
$v_N > \frac{1}{\beta_{N+1}} - \frac{\alpha_k}{\beta_k}$ for $1 \leq k \leq N$, and hence $\rho_{N,k} > \beta_k / \beta_{N+1}$
for all $1 \leq k \leq N$; in particular, $\rho_{N,k} > 0$ always.
The following example, from~\cite{mmpw}, is the finite-particle
analogue of Example~\ref{ex:lattice-atlas-first}.

\begin{example}[1 dog and $N$ sheep]
\label{ex_N_sheep}
Assume that $a_1=a\in (0,1)$,
and $b_1=a_2=b_2=\cdots=b_{N+1}=1$. 
We know from  Example~2.15 of~\cite{mmpw} 
that $\rho_k =   a + (1-a)\frac{k}{N+1}<1$, 
for $k=1,\ldots,N$,
and the speed of the cloud is
$v_N = \frac{1-a}{N+1}$.
\end{example}

Here is our comparison result.

\begin{proposition}
    \label{prop:finite-coupling}
    Suppose that~\eqref{ass:a-inverse-sum} holds.
\begin{enumerate}[label=(\roman*)]
 \item\label{prop:finite-coupling-i}
Consider the finite particle system $\xi^{N,0}(t)$ 
  with parameters $a_1, b_1$, $\ldots, a_N, b_N, a_{N+1}$
but with $b_{N+1}$ set to~$0$. There exists a coupling of $\xi^{N,0}(t)$
and $\xi$ with $\xi^{N,0} (0) = \xi (0) \in \Z \times \bbDB$ such that,
for all  $k \in \{1,2,\ldots, N\}$ and all $t \in \RP$,
\[ \eta_k^{N,0} (t) \leq \eta_k (t), \text{ and } X_k^{N,0} (t) \leq X_k (t) .\]
 \item\label{prop:finite-coupling-ii}
    Consider the finite particle system $\xi^{N,1}(t)$ 
  with parameters $a_1, b_1, \ldots, a_{N+1} , b_{N+1}$. 
  There exists a coupling of $\xi^{N,1}(t)$
and $\xi$ with $\xi^{N,1} (0) = \xi (0) \in \Z \times \bbDB$ such that,
for all  $k \in \{1,2,\ldots, N\}$ and all $t \in \RP$,
\[ \eta_k^{N,1} (t) \geq \eta_k (t), \text{ and } X_k^{N,1} (t) \geq X_k (t) .\]
\end{enumerate}
    \end{proposition}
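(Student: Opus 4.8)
The plan is to build both couplings on a common Harris-type graphical representation, exploiting the fact that in all three systems (the semi-infinite system $\xi$, and the two truncated systems $\xi^{N,0}$ and $\xi^{N,1}$) the left part of the dynamics — the particles $1,\ldots,N$ and queues $1,\ldots,N$ — is driven by the \emph{same} collection of rate parameters $a_1,b_1,\ldots,a_N,b_N$ and the boundary input $a_1$ at queue~$1$. First I would set up, for each $k\in\{1,\ldots,N\}$, a Poisson process of left-arrows at rate $b_k$ and right-arrows at rate $a_{k+1}$ (these govern service and routing at queue~$k$, equivalently the jumps of particles $k$ and $k+1$), plus the entrance Poisson process of rate $a_1$ at queue~$1$; for the semi-infinite system we additionally have the processes at queues $k\ge N+1$, and for $\xi^{N,1}$ we have an extra right-arrow process of rate $a_{N+1}$ and left-arrow process of rate $b_{N+1}$ acting at the boundary queue~$N$, while for $\xi^{N,0}$ only the $a_{N+1}$ right-arrows act (since $b_{N+1}=0$ removes the corresponding input). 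Existence of all three processes on this common probability space is guaranteed by Proposition~\ref{prop:existence-from-heaviside} (and its finite-system analogue), using~\eqref{ass:a-inverse-sum}; I would remark that the finite systems trivially satisfy any non-explosion requirement.

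The second step is to treat the two parts separately using \emph{second-class customers}, exactly as in the proof of Proposition~\ref{prop:stochastic-monotonicity}. For part~\ref{prop:finite-coupling-i}: in $\xi^{N,0}$, queue~$N$ never releases customers to the right (as $b_{N+1}=0$ is the rate at which a customer would be routed from queue~$N$ onward in the infinite system — wait, more precisely, in the infinite system customers \emph{leave} queue~$N$ to queue $N+1$ at rate $a_{N+1}$ and this is the same in $\xi^{N,0}$; the difference is that in $\xi^{N,0}$ those customers vanish, whereas in $\xi$ they pile up in queues $\ge N+1$ and can return). So I would declare every customer present in $\xi$ but "absent" from $\xi^{N,0}$ — namely those that, in $\xi$, have been routed past queue~$N$ at least once — to be second-class, with first-class customers being the "common" ones; first-class customers in $\xi$ obey exactly the $\xi^{N,0}$ dynamics restricted to queues $1,\ldots,N$, so $\eta_k^{N,0}(t)\le\eta_k(t)$ for $k\le N$. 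The particle-position inequality $X_k^{N,0}(t)\le X_k(t)$ then follows from $X_1^{N,0}(0)=X_1(0)$ together with the fact that, since $b_{N+1}=0$ in the truncated system, it has \emph{no customers entering from the right}, so relative to $\xi$ it never gains the "extra" left-pushes on $X_1$ that correspond to such entries — more cleanly, one uses the conservation relation~\eqref{eq:conservation}-type bookkeeping on the left block: $X_k^{N,0}(t)-X_k(t) = (X_1^{N,0}(t)-X_1(t)) - \sum_{j<k}(\eta_j^{N,0}(t)-\eta_j(t))$ and one shows $X_1^{N,0}(t)\le X_1(t)$ directly from the graphical construction (the infinite system's leftmost particle receives every left-jump the truncated one does, plus possibly more). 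For part~\ref{prop:finite-coupling-ii}: now $\xi^{N,1}$ has the \emph{extra} input $b_{N+1}$ at queue~$N$, so it is $\xi^{N,1}$ that carries second-class customers (those originating from $b_{N+1}$-entries or reroutings that have no counterpart in $\xi$), and the inequalities reverse: $\eta_k^{N,1}(t)\ge\eta_k(t)$ and $X_k^{N,1}(t)\ge X_k(t)$.

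I expect the main obstacle to be making the "extra customers are exactly the past-queue-$N$ customers" bookkeeping airtight, i.e.\ verifying that under the common driving noise the first-class sub-population in one system really is a faithful copy of the other system, in both directions and \emph{including} the entrance process at queue~$1$ (which controls $X_1$). The subtlety is that a customer that leaves queue~$N$ to the right in $\xi$ and later returns to queue~$N$ must, upon return, be re-tagged as first-class and matched against the truncated system — so the coupling is not simply "freeze some customers" but a dynamic re-labelling; one has to check that the attempted jumps of such a returning customer use the same Poisson clocks in both systems and that the exclusion/priority resolution is consistent. I would handle this by arguing inductively over the (locally finite) sequence of arrow-times affecting queues $1,\ldots,N$, showing that at each such time the ordered vector $(\eta_1,\ldots,\eta_N)$ of the truncated system and of the first-class population in the infinite system move in lockstep (for~\ref{prop:finite-coupling-i}) respectively that the infinite system's vector is dominated by the first-class population of $\xi^{N,1}$ (for~\ref{prop:finite-coupling-ii}), the key monotonicity input being the standard fact that the zero-range/exclusion update at a single queue is monotone in the occupation numbers. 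The particle-location inequalities then come "for free" from the queue inequalities plus the identity $X_k = X_1 - k + 1 + \sum_{j<k}\eta_j$ once $X_1$ is controlled, and $X_1$ is controlled because in~\ref{prop:finite-coupling-i} the truncated system's $E^{\rightarrow 1}$ and $E^{\leftarrow 1}$ are dominated by and dominate (respectively) those of $\xi$, and symmetrically in~\ref{prop:finite-coupling-ii}.
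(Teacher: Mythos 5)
Your overall strategy---common graphical construction of the Jackson networks on queues $1,\dots,N$, domination by second-class customers for the $\eta$-inequalities, and tracking the entrance/exit counting processes $E^{\rightarrow 1}$, $E^{\leftarrow 1}$ to control $X_1$---is exactly the paper's argument, and it is sound.

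However, the ``subtlety'' you flag for part~\ref{prop:finite-coupling-i}, that a customer who leaves queue~$N$ rightward in~$\xi$ and later returns ``must, upon return, be re-tagged as first-class,'' is a misconception, and acting on it would break the coupling. The correct bookkeeping is precisely the ``freeze'' you dismiss: a customer that is routed from queue~$N$ to queue~$N+1$ becomes second-class permanently. In~$\xi^{N,0}$ it has no counterpart (it has exited the finite system for good), so if it returns to queues $\le N$ in~$\xi$ it must remain second-class, otherwise the first-class population in~$\xi$ would strictly exceed~$\eta^{N,0}$ and the identity ``first-class population of~$\xi$ restricted to queues $1,\dots,N$ equals~$\eta^{N,0}$'' would fail. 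With the permanent tag, that identity \emph{does} hold (priority service means first-class dynamics are oblivious to second-class customers), and both $\eta^{N,0}_k(t)\le\eta_k(t)$ and $E^{\leftarrow 1}_{N,0}(t)\le E^{\leftarrow 1}(t)$, hence $X_1^{N,0}(t)\le X_1(t)$, drop out directly; no induction over arrow-times is needed. The same remark applies in mirror image to part~\ref{prop:finite-coupling-ii}: the extra second-class customers live in~$\xi^{N,1}$ (arising when the rate-$b_{N+1}$ Poisson clock fires but queue~$N+1$ in~$\xi$ is empty, so the return stream in~$\xi$ is a \emph{thinning} of the exogenous stream in~$\xi^{N,1}$), they stay second-class, and no re-labelling occurs. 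If you carry out your inductive check honestly you will find that the monotone single-queue update is preserved under the frozen labelling, so the gap is fixable; but as written, the re-tagging step is wrong and should simply be deleted.
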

\begin{proof}
    In both cases, we will start the construction by building the underlying Jackson networks.
Recall that in the construction of the semi-infinite
Jackson system, as described in Section~\ref{sec:jackson-terminology}, 
we considered the counting processes $E^{\rightarrow 1}(t)$ and 
$E^{\leftarrow 1}(t)$,
the number of  customers that enter and depart, respectively, the system during $(0,t]$.
 Recall from~\eqref{eq:X1-via-counting} that
$X_1 (t) =  X_1 (0) + E^{\leftarrow 1}(t) - E^{\rightarrow 1}(t)$,  for all $t \in \RP$. 

Fix $N \in \N$.  
Let $\eta^{N,0}$ denote the
finite system of $N$ queues with parameters $a_1, b_1, \ldots, a_N, b_N, a_{N+1} \in (0,\infty)$
exactly as in the semi-infinite system $\eta$,
but with $b_{N+1}$ set to~$0$; that is, queue $N+1$ never routes customers to queue~$N$. Then $\eta^{N,0}$
evolves as an autonomous finite open Jackson network, with customer entry via queue~$1$ 
and exit via queues~$1$ and $N$. 
The natural coupling of $\eta^{N,0}$ and $\eta$ 
demonstrates that $\eta^{N,0} \stochdoml \eta$, since
customers in $\eta^{N,0}$ that depart queue~$N$ never return, while those in $\eta$ may at some point come back.
We use the same process 
of exogenous customer arrivals 
as in the semi-infinite system, $E^{\rightarrow 1}(t)$,  the  number of distinct exogenous customers that enter via queue~1  during $(0,t]$. Also introduce notation:
 \begin{itemize}
     \item $E^{\leftarrow 1}_{N,0}(t)$, the number of  customers that depart the system via queue~1 before reaching queue $N+1$ during $(0,t]$.
     \item $E^{N \rightarrow}_{N,0} (t)$, the number of distinct customers that,
     during $(0,t]$, complete at least one service at queue $N$ and are routed to queue~$N+1$.
 \end{itemize}
 The coupling is such that $E^{\leftarrow 1}_{N,0}(t) \leq E^{\leftarrow 1} (t)$,
 since customers that leave the (finite) system via queue~$N$ are lost
 before they can exit via queue~$1$. 
 In the particle system, particle~$N+1$ never jumps to the right, and holes can exit the system on the right but never return.
 Since the leftmost particle only moves on flux through the \emph{left} boundary of the system, 
\[ X_1^{N,0} (t) = X_1 (0)  +E^{\leftarrow 1}_{N,0}(t) - E^{\rightarrow 1}(t)  \leq X_1 (t)  , \text{ for all } t \in \RP, \]
 The gives a coupling of $\xi(t)$
 with the finite particle system $\xi^{N,0}(t)$ 
  with parameters $a_1, b_1, \ldots, a_N, b_N, a_{N+1} \in (0,\infty)$
but with $b_{N+1}$ set to~$0$, and verifies part~\ref{prop:finite-coupling-i}.

Let $\eta^{N,1}$ denote the
finite system of $N$ queues with parameters $a_1, b_1, \ldots,   a_{N+1} , b_{N+1} \in (0,\infty)$.
Then $\eta^{N,1}$
evolves as an autonomous finite open Jackson network, with customer entry and exit via queues~$1$ and $N$. The natural coupling of $\eta^{N,1}$ and $\eta$ 
demonstrates that $\eta^{N,1} \stochdomg \eta$, since 
in $\eta^{N,1}$ exogenous customers enter queue~$N$ at rate $b_{N+1}$,
while in $\eta$ this entry stream is thinned. 
The ``extra'' 
customers that arrive in $\eta^{N,1}$ via queue~$N$
we view as second-class customers in the infinite system, and then ignoring all second-class customers recovers~$\eta$.
Furthermore, introduce notation:
 \begin{itemize}
  \item $E^{\leftarrow 1}_{N,1}(t)$, the number of  customers that depart via queue~1 before reaching queue $N+1$ during $(0,t]$,
  plus the number of second-class customers that enter from the right and depart from the left  during $(0,t]$.
 \end{itemize}
Now the coupling is such that $E^{\leftarrow 1}_{N,1}(t) \geq E^{\leftarrow 1}(t)$, and hence
\[ X_1^{N,1} (t) = X_1 (0)  +E^{\leftarrow 1}_{N,1}(t) - E^{\rightarrow 1}(t)  \geq X_1 (t)  , \text{ for all } t \in \RP, \]
 The gives a coupling of $\xi(t)$
 with the finite particle system $\xi^{N,1}(t)$ 
  with parameters $a_1, b_1$, $\ldots, a_N, b_N, a_{N+1},b_{N+1} \in (0,\infty)$, and verifies part~\ref{prop:finite-coupling-ii}.
 \end{proof}

\subsection{Proof of local convergence}
\label{sec:convergence}

  \begin{proof}[Proof of Theorem~\ref{thm:local-convergence}]
We use the truncation constructions described in Proposition~\ref{prop:finite-coupling}.
Take $A \subset \N$ finite, and choose $N > \sup A$.
Proposition~\ref{prop:finite-coupling}\ref{prop:finite-coupling-i}
gives existence of the finite system $\eta^{N,0}$ for which $\eta^{N,0} \stochdoml \eta$.
We will apply Proposition~\ref{prop:finite-cloud} to $\eta^{N,0}$;
to do so, write $\rho_{N,k} = \alpha_k + v_{N,0} \beta_k$,
where $v_{N,0}$ is given by
the formula for $v_N$ at~\eqref{eq:v_explicit_variant}
but with  $b_{N+1}$ set to zero, i.e., $v_{N,0} = - \alpha_{N+1} / \beta_{N+1}$.
Then, by~\eqref{eq:v0-def} and the fact that $\alpha_k/\beta_k$ is strictly decreasing (see~\eqref{eq:beta-over-alpha-recursion}) it follows that
\begin{equation}
    \label{eq:finite-infinite-coupling}
v_{N,0} \leq \lim_{N \to \infty} v_{N,0} = v_0, \text{ and } \rho_{N,k} \leq \lim_{N \to \infty} \rho_{N,k} = \alpha_k + v_0 \beta_k = \rho_k, \end{equation}
where $\rho$ is the minimal admissible solution.
Note that, since $\rho$ is admissible, we get from~\eqref{eq:finite-infinite-coupling}
that $\rho_{N,k} < 1$ for all $N$ and all $1 \leq k \leq N$. Thus Proposition~\ref{prop:finite-cloud}
is applicable; it yields
\begin{equation}
\label{eq:finite-convergence}
\lim_{t \to \infty} \IP \bigg( \bigcap_{k \in A} \bigl\{ \eta_k^{N,0} (t) \geq m_k \bigr\} \bigg) = \prod_{k \in A} \rho_{N,k}^{m_k} , \text{ for } m = (m_k)_{k \in A} \in \ZP^A.
\end{equation}
Then, from~\eqref{eq:finite-convergence}, the fact that  $\eta^{N,0} \stochdoml \eta$, 
and taking $N \to \infty$ and using~\eqref{eq:finite-infinite-coupling}, we obtain
\begin{equation}
\label{eq:local-convergence-liminf}    
 \liminf_{t \to \infty} \IP \bigg( \bigcap_{k \in A} \bigl\{ \eta_k (t) \geq m_k \bigr\} \bigg) \geq \lim_{N \to \infty} \prod_{k \in A} \rho_{N,k}^{m_k}  = \prod_{k \in A} \rho_{k}^{m_k}.
\end{equation}
On the other hand,
started from $\eta(0) = 0 \in \bbDB$,
Corollary~\ref{cor:domination}
implies that $\eta (t) \stochdoml \nu_\rho$ for all~$t$, so that, in particular,
\begin{equation}
\label{eq:local-convergence-0}   
\IP \bigg( \bigcap_{k \in A} \bigl\{ \eta_k (t) \geq m_k \bigr\} \bigg) 
\leq \prod_{k \in A} \rho_{k}^{m_k}.
\end{equation}

In the following, we use the notation $\IP_\zeta$ for the process started from a fixed
configuration $\eta(0) = \zeta\in \bbD$ and $\IP_\nu$ for the process started from $\eta(0) \sim \nu$ for some measure~$\nu \in \cP ( \bbD)$. First, the combination of~\eqref{eq:local-convergence-liminf} and~\eqref{eq:local-convergence-0} establishes that
\begin{equation}
\label{eq:conv_0_nu_rho}
 \lim_ {t\to\infty}\IP_0 \bigg( \bigcap_{k \in A} \bigl\{ \eta_k (t) \geq m_k \bigr\} \bigg) 
= \prod_{k \in A} \rho_{k}^{m_k}.
\end{equation}
If $\nu\stochdoml \nu_\rho$, we can write, using Proposition~\ref{prop:stochastic-monotonicity}, 
\begin{align*}
\IP_0 \bigg( \bigcap_{k \in A} \bigl\{ \eta_k (t) \geq m_k \bigr\} \bigg) 
& \leq \IP_\nu \bigg( \bigcap_{k \in A} \bigl\{ \eta_k (t) \geq m_k \bigr\} \bigg) \\
& \leq \IP_{\nu_\rho} \bigg( \bigcap_{k \in A} \bigl\{ \eta_k (t) \geq m_k \bigr\} \bigg) 
= \prod_{k \in A} \rho_{k}^{m_k},
\end{align*}
which, together with~\eqref{eq:conv_0_nu_rho},
implies the claim for the process starting from~$\nu$.

Now, let $\eta^*=\eta (0) \in \bbDB$ be a fixed finite initial configuration,
and let~$\zeta$ be a random (initial) configuration chosen according to~$\nu_\rho$.
Abbreviate $u:=\prod_{k \in A} \rho_{k}^{m_k}$ and $p:=\IP_{\nu_\rho} (  \zeta\geq \eta^* ) > 0$,
where the inequality $\zeta\geq \eta^*$ means $\zeta_k \geq \eta^*_k$ for every $k \in \N$.
Fix an arbitrary~$\eps \in (0,u)$. By~\eqref{eq:conv_0_nu_rho}, there exists $t_0 \in \RP$ large enough so that
\[
\IP_0 \bigg( \bigcap_{k \in A} 
\bigl\{ \eta_k (t) \geq m_k \bigr\} \bigg) \geq u-\eps, \text{ for all } t \geq t_0.
\]
Then, we can write
\begin{align*}
u & = \IE_{\nu_\rho} \IP_\zeta \bigg( \bigcap_{k \in A} 
\bigl\{ \eta_k (t) \geq m_k \bigr\} \bigg) \\
&= \IE_{\nu_\rho} \bigg(\big(\1{\zeta\geq \eta^*} + \1{\zeta \not \geq \eta^*}\big)
\IP_\zeta \bigg( \bigcap_{k \in A} 
\bigl\{ \eta_k (t) \geq m_k \bigr\} \bigg)\bigg)\\
&\geq p \cdot \IP_{\eta^*} \bigg( \bigcap_{k \in A} 
\bigl\{ \eta_k (t) \geq m_k \bigr\} \bigg) + (1-p)(u-\eps),
\end{align*}
by two further applications of Proposition~\ref{prop:stochastic-monotonicity},
once comparing $\zeta$ with $\eta^*$ and once $\zeta$ with $0$. 
It follows that
\begin{equation}
\label{eq:local-convergence-1}    
\IP_{\eta^*} \bigg( \bigcap_{k \in A} 
\bigl\{ \eta_k (t) \geq m_k \bigr\} \bigg)
  \leq \frac{u-(1-p)(u-\eps)}{p} = u + \frac{1-p}{p}\eps.
\end{equation}
Moreover, another application of Proposition~\ref{prop:stochastic-monotonicity},
comparing $\eta^*$ with $0$, shows that
\begin{equation}
\label{eq:local-convergence-2}    
 \IP_{\eta^*} \bigg( \bigcap_{k \in A} 
\bigl\{ \eta_k (t) \geq m_k \bigr\} \bigg) \geq u-\eps.\end{equation}
Combining~\eqref{eq:local-convergence-1} and~\eqref{eq:local-convergence-2}, since $\eps$ was arbitrary,
concludes the proof of Theorem~\ref{thm:local-convergence}.
\end{proof}

\begin{proof}[Proof of Corollary~\ref{cor:infinite_mass_rho}]
Recall from~\eqref{eq:conservation} that $X_1(0) -X_1(t)=\sum_{k \in \N} ( \eta_k(t) - \eta_k (0) )$,
where $\sum_{k \in \N} \eta_k (0) < \infty$ since $X(0) \in \bbXB$. 
Under the hypothesis $\sum_{k \in \N} \alpha_k = \infty$, 
 Lemma~\ref{lem:stationary-mean}
 shows that $\nu_\alpha ( \bbDB ) = 0$.
 Then for every $M \in \RP$ (large)
and every $\eps>0$ (small),
one can  find a finite set~$A_{M,\eps} \subset \N$
such that
\[
 {\nu_\rho} \bigg\{ \eta \in \bbD : \sum_{k\in A_{M,\eps}}\eta_k > M \bigg\}  
    > 1 - \frac{\eps}{2}.
\]
Then, Theorem~\ref{thm:local-convergence} implies that, for all large enough~$t$,
\[
\IP \bigg[\sum_{k\in \ZP} \eta_k(t) > M \bigg] \geq
 \IP \bigg[\sum_{k\in A_{M,\eps}}\eta_k(t) > M \bigg]   
    > 1 - \eps,
\]
which shows the claim.
\end{proof}

\section{Asymptotics of the leftmost particle}
\label{sec:leftmost-particle}

\subsection{Strong law of large numbers}
\label{sec:strong-law}

In the section we consider the asymptotics of $X_1 (t)$, the location of the leftmost particle.
We start with a result that contains the strong law given in Theorem~\ref{thm:strong-law} above.
Recall the   customer random walk $Q$ from Definition~\ref{def:customer-random-walk}.
The main result is that, when~$\cV \neq \emptyset$ and $X(0) \in \bbXB$,
the limit $\lim_{t \to \infty} t^{-1} X_1(t)$ exists and 
is equal to $v_0$, the speed associated with the minimal admissible solution.
Recall that $v_0 \leq 0$; if $v_0 <0$ we say 
the dynamics is \emph{ballistic}.

\begin{theorem}
\label{thm:leftmost-particle-via-customer-walk}
Suppose that~\eqref{ass:positive-rates} and~\eqref{ass:a-inverse-sum} hold, that $\cV \neq \emptyset$, and $X(0) \in \bbXB$. 
\begin{enumerate}[label=(\roman*)]
 \item
 \label{thm:leftmost-particle-via-customer-walk-i}
If $v_0 = 0$, then, for every $k \in \N$,
\begin{equation}
 \label{eq:speed-leftmost-null-recurrent} 
  \lim_{t\to\infty} \frac{X_k(t)}{t} = 0, \as
 \end{equation}
 \item
\label{thm:leftmost-particle-via-customer-walk-ii}
If  $v_0 < 0$ and $\bbar = \infty$, then, for every $k \in \N$,
\begin{equation}
 \label{eq:speed-leftmost-transient} 
  \lim_{t\to\infty} \frac{X_k(t)}{t} = v_0, \as
 \end{equation}
    \item
  \label{thm:leftmost-particle-via-customer-walk-iii}
  Suppose that $Q$ is positive recurrent, and that $\inf_{k \in \N} a_k >0$.
  Then $X_1$ is ergodic, meaning that~\eqref{eq:X1-ergodic} holds.
\end{enumerate}
\end{theorem}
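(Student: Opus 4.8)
The plan is to prove, for each fixed $k$, the matching bounds $\liminf_{t\to\infty} X_k(t)/t \ge v_0$ and $\limsup_{t\to\infty} X_k(t)/t \le v_0$ in parts~\ref{thm:leftmost-particle-via-customer-walk-i} and~\ref{thm:leftmost-particle-via-customer-walk-ii}, in each case by sandwiching $\xi$ between finite truncations and invoking the finite-cloud strong law, Proposition~\ref{prop:finite-cloud}; part~\ref{thm:leftmost-particle-via-customer-walk-iii} will follow directly from Corollary~\ref{cor:finitely-supported-convergence}. Throughout, I would couple all the finite systems of Proposition~\ref{prop:finite-coupling} with $\xi$ simultaneously, using the common graphical construction underlying that proposition, so that all the comparison inequalities and all the finite-system limits hold on a single almost-sure event and the passages $N\to\infty$ below can be taken pathwise.

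For the lower bound (both parts~\ref{thm:leftmost-particle-via-customer-walk-i} and~\ref{thm:leftmost-particle-via-customer-walk-ii}): fix $k$ and $N>k$ and use Proposition~\ref{prop:finite-coupling}\ref{prop:finite-coupling-i}, which gives $X_k^{N,0}(t)\le X_k(t)$. As in the proof of Theorem~\ref{thm:local-convergence}, the truncation $\xi^{N,0}$ has cloud speed $v_{N,0}=-\alpha_{N+1}/\beta_{N+1}$; since $\alpha_j/\beta_j$ is strictly decreasing in $j$ (by~\eqref{eq:beta-over-alpha-recursion}) and tends to $-v_0$ (by~\eqref{eq:v0-def}), we have $v_{N,0}<v_0$ and $v_{N,0}\uparrow v_0$. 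Because $\cV\neq\emptyset$ forces $v_0\in\cV$ (Proposition~\ref{prop:admissible-solutions-first}), admissibility of $\rho=\alpha+v_0\beta$ yields $\rho_{N,k}=\alpha_k+v_{N,0}\beta_k<\alpha_k+v_0\beta_k=\rho_k<1$ for $1\le k\le N$, so $\xi^{N,0}$ is stable and Proposition~\ref{prop:finite-cloud} gives $X_k^{N,0}(t)/t\to v_{N,0}$ a.s.; letting $N\to\infty$ gives $\liminf_{t\to\infty}X_k(t)/t\ge v_0$ a.s.

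The upper bound in part~\ref{thm:leftmost-particle-via-customer-walk-i} ($v_0=0$) is immediate: since $X(0)\in\bbXB$, from $X_k(t)=X_1(t)+(k-1)+\sum_{j<k}\eta_j(t)\le X_1(t)+(k-1)+\sum_{j\in\N}\eta_j(t)$ and the conservation identity~\eqref{eq:conservation} one gets $X_k(t)\le X_1(0)+\sum_{j\in\N}\eta_j(0)+k-1$ uniformly in $t$, so $\limsup_{t\to\infty}X_k(t)/t\le 0=v_0$. For part~\ref{thm:leftmost-particle-via-customer-walk-ii} ($v_0<0$, $\bbar=\infty$), I would argue as follows. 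Since~\eqref{ass:a-inverse-sum} holds and $v_0<0$, Proposition~\ref{prop:admissible-solutions}\ref{prop:admissible-solutions-iv} forces $\cV=\{v_0\}$, and (as in the proof of Proposition~\ref{prop:admissible-solutions}, using $\bbar=\infty$) $\inf_{k\ge1}(1-\alpha_k)/\beta_k=v_0$; moreover each term $(1-\alpha_k)/\beta_k$ strictly exceeds $v_0$ because $v_0\in\cV$, so this infimum is not attained. Hence the sequence $c_k:=(1-\alpha_k)/\beta_k$ has infinitely many strict running minima, at indices $k_1<k_2<\cdots\to\infty$, with $c_{k_j}\downarrow v_0$. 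Setting $N_j:=k_j-1$, the defining property $c_{k_j}<c_i$ for $1\le i\le N_j$ is exactly the stability condition $\rho_{N_j,i}=\alpha_i+v_{N_j}\beta_i<1$ for $1\le i\le N_j$ of the finite system $\xi^{N_j,1}$, whose cloud speed is $v_{N_j}=(1-\alpha_{N_j+1})/\beta_{N_j+1}=c_{k_j}$ by~\eqref{eq:v_explicit_variant}. Thus Proposition~\ref{prop:finite-cloud} applies to $\xi^{N_j,1}$, and Proposition~\ref{prop:finite-coupling}\ref{prop:finite-coupling-ii} gives $X_k(t)\le X_k^{N_j,1}(t)$ with $X_k^{N_j,1}(t)/t\to v_{N_j}$ a.s., whence $\limsup_{t\to\infty}X_k(t)/t\le v_{N_j}$ for each $j$; letting $j\to\infty$ gives $\limsup_{t\to\infty}X_k(t)/t\le v_0$. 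Combined with the lower bound, this establishes parts~\ref{thm:leftmost-particle-via-customer-walk-i} and~\ref{thm:leftmost-particle-via-customer-walk-ii}.

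Part~\ref{thm:leftmost-particle-via-customer-walk-iii} should reduce to Corollary~\ref{cor:finitely-supported-convergence}: positive recurrence of $Q$ gives $\sum_k a_{k+1}\alpha_k<\infty$ (Lemma~\ref{lem:customer-walk}), so $\inf_k a_k>0$ forces $\sum_k\alpha_k<\infty$; also $Q$ is recurrent, hence $v_0=0$ (see the discussion following Lemma~\ref{lem:customer-walk}, together with Proposition~\ref{prop:admissible-solutions}\ref{prop:admissible-solutions-iii}), and since $\cV\neq\emptyset$ we have $0=v_0\in\cV$, making $\rho=\alpha$ admissible. The hypotheses of Corollary~\ref{cor:finitely-supported-convergence} are then all met, and it yields the ergodic statement~\eqref{eq:X1-ergodic}. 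I expect the main obstacle to be the upper bound in part~\ref{thm:leftmost-particle-via-customer-walk-ii}: one must produce a sequence of truncations that are simultaneously stable (so that Proposition~\ref{prop:finite-cloud} applies and the system genuinely behaves as a single cloud whose speed governs the leftmost particle) and whose cloud speeds converge to $v_0$. The crucial observation making this possible is that stability of $\xi^{N,1}$ is precisely the running-minimum property of $(1-\alpha_k)/\beta_k$ at $N+1$, and that under the stated hypotheses the infimum of this sequence equals $v_0$ and is not attained, so such $N$ exist arbitrarily far out with speeds $c_{N+1}\downarrow v_0$.
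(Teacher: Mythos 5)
Your proof is correct and takes essentially the same route as the paper's: lower bounds via the coupling with $\xi^{N,0}$ of Proposition~\ref{prop:finite-coupling}\ref{prop:finite-coupling-i} plus the finite-cloud strong law (Proposition~\ref{prop:finite-cloud}) and $v_{N,0}\uparrow v_0$; upper bound in~\ref{thm:leftmost-particle-via-customer-walk-i} by the deterministic bound forced by $X(0)\in\bbXB$; upper bound in~\ref{thm:leftmost-particle-via-customer-walk-ii} via $\xi^{N,1}$ and Proposition~\ref{prop:finite-coupling}\ref{prop:finite-coupling-ii}; and part~\ref{thm:leftmost-particle-via-customer-walk-iii} by reduction to Corollary~\ref{cor:finitely-supported-convergence}.

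One point where you are \emph{more} careful than the paper's write-up, and it is worth noting: before invoking Proposition~\ref{prop:finite-cloud} for $\xi^{N,1}$ one must check the finite-cloud stability hypothesis $\rho_{N,k}<1$ for $1\le k\le N$, which is \emph{not} automatic since $v_N=(1-\alpha_{N+1})/\beta_{N+1}>v_0$. Your observation that this stability condition is precisely the statement that $(1-\alpha_{N+1})/\beta_{N+1}$ is a strict running minimum of $c_k:=(1-\alpha_k)/\beta_k$, combined with the fact that $\cV=\{v_0\}$ forces $\inf_k c_k=v_0$ to be unattained (so such $N$ exist arbitrarily far out with speeds decreasing to $v_0$), cleanly supplies the ``suitable subsequence'' that the paper mentions but does not spell out. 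Two minor remarks: (a) the simultaneous coupling across all $N$ is convenient but not needed, since for each fixed $N$ the a.s.\ inequality $\liminf_t X_k(t)/t\ge v_{N,0}$ (resp.\ $\limsup\le v_{N_j}$) holds outright, and one can intersect these countably many full-measure events without a common coupling; (b) the existence of infinitely many \emph{strict} (as opposed to weak) running minima with values decreasing to the unattained infimum deserves a one-line justification (take, for each $n$ along a subsequence where $\min_{k\le n}c_k$ strictly decreases, the \emph{smallest} index attaining that minimum), but the claim is true.
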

\begin{remarks}
The assumptions that $\cV \neq \emptyset$ and $X(0) \in \bbXB$ cannot be removed, since they ensure the coherence of the cloud of particles and that it is the speed of the minimal admissible solution that governs the asymptotics. Indeed,
it is easy to find examples where~$\cV = \emptyset$ and the leftmost particle separates from the bulk of the cloud, when it will travel at its
intrinsic speed $a_1-b_1$, regardless of the value of~$v_0$. 

As explained before the statement of Theorem~\ref{thm:strong-law}, under assumption~\eqref{ass:a-inverse-sum}, there is
only one accessible admissible solution
started from 
$X(0) \in \bbXB$. If $X(0) \in \bbX \setminus \bbXB$, however, there may be many 
admissible solutions (with $v>0$) that are in principle accessible; cf.~Proposition~\ref{prop:invariant-measure}. 
Our conjecture is that, under mild conditions, if $Q$ is transient or null recurrent, one still observes the speed~$v_0$
for the leftmost particle, while if $Q$ is positive recurrent, which speed $v$ is ``selected'' by the dynamics will depend on the density of the initial configuration (see Section~\ref{sec:literature} for relevant literature). 
Since we are primarily interested in initial conditions in $\bbXB$, we do not examine these cases in the present paper.
\end{remarks}
 
\begin{proof}[Proof of Theorem~\ref{thm:leftmost-particle-via-customer-walk}]
We again use the truncation constructions described in Proposition~\ref{prop:finite-coupling}.
Consider the finite particle system $\xi^{N,0}(t)$ 
  with parameters $a_1, b_1$, $\ldots, a_N, b_N, a_{N+1}$
but with~$b_{N+1}$ set to~$0$. Proposition~\ref{prop:finite-coupling}\ref{prop:finite-coupling-i}  furnishes 
a coupling of $\xi^{N,0}(t)$
and $\xi$ such that,
for all  $k \in \{1,2,\ldots, N\}$ and all $t \in \RP$, $X_k^{N,0} (t) \leq X_k (t)$.
As in the proof of Theorem~\ref{thm:local-convergence}, an application
of Proposition~\ref{prop:finite-cloud} to the system $\xi^{N,0}(t)$
shows that $\lim_{t\to \infty} t^{-1} X^{N,0}_k (t) = v_{N,0}$, a.s.,
for every~$k$, where $v_{N,0} = -\alpha_{N+1}/\beta_{N+1}$.
Then, since $\lim_{N \to \infty}  v_{N,0} = v_0$,
\[ \liminf_{t \to \infty} \frac{X_k(t)}{t} \geq \lim_{N \to \infty}  v_{N,0} = v_0, \as \]
If $v_0 =0$, then this completes the proof of part~\ref{thm:leftmost-particle-via-customer-walk-i},
since for $X(0) \in \bbXB$ we have $\limsup_{t \to \infty} t^{-1} X_k (t) \leq 0$, a.s.

Suppose, on the other hand, that $v_0 < 0$. Consider the finite particle system $\xi^{N,1}(t)$ 
  with parameters $a_1, b_1, \ldots, a_{N+1} , b_{N+1}$. 
 Proposition~\ref{prop:finite-coupling}\ref{prop:finite-coupling-ii}  furnishes  a coupling of $\xi^{N,1}(t)$
and $\xi$  such that,
for all  $k \in \{1,2,\ldots, N\}$ and all $t \in \RP$,
$X_k^{N,1} (t) \geq X_k (t)$.
Another application of 
Proposition~\ref{prop:finite-cloud}
shows that
\[ \lim_{t \to \infty} \frac{X^{N,1}_k (t)}{t} = \frac{1-\alpha_{N+1}}{\beta_{N+1}} = \frac{1}{\beta_{N+1}} + v_{N,0}, \as \]
Under the assumption that $\bbar = \infty$, we can take $N \to \infty$ (along a suitable subsequence)
to conclude that
\[ \limsup_{t \to \infty} \frac{X_k (t)}{t} \leq v_0, \]
which completes the proof of part~\ref{thm:leftmost-particle-via-customer-walk-ii}.

Finally, for part~\ref{thm:leftmost-particle-via-customer-walk-iii},
recall from
Lemma~\ref{lem:customer-walk} that $Q$ is positive recurrent
if and only if $\sum_{k \in \N} a_{k+1} \alpha_k < \infty$.
Consequently, under the condition~$\inf_{k \in \N} a_k > 0$, 
if $Q$ is positive recurrent then $\sum_{k \in \N} \alpha_k < \infty$.
If $\cV \neq \emptyset$, then $v_0 \in \cV$,
but $v_0 = 0$ by~\eqref{eq:v0-recurrnce}; hence $0 \in \cV$, meaning that $\alpha$ is admissible. We may thus apply Corollary~\ref{cor:finitely-supported-convergence} to obtain the result.
\end{proof}

\subsection{Dog and sheep example}
\label{sec:dog-sheep}

The aim of this section is to prove Theorem~\ref{thm:dog_repelled}
on the asymptotics of the ``dog'' particle in the ``dog and sheep'' 
process described in Example~\ref{ex:lattice-atlas-first}. Recall that 
there is a dog (a particle with intrinsic drift
to the right) and,
placed to its right, countably many sheep (particles with zero intrinsic drift), in such a way that the total number of unoccupied sites to the right of the dog is finite, i.e., $X(0) \in \bbXB$.
For Theorem~\ref{thm:dog_repelled}, it is convenient to assume 
that the dog jumps to the left with rate $a<1$
and \emph{attempts} to jump to the right with rate~$1$,
and that the sheep attempt to jump either direction at rate~$1$, i.e.,
$b  = c =1$ in the notation at~\eqref{eq:dog-sheep-rates};
this restriction should not be essential.

\begin{proof}[Proof of Theorem~\ref{thm:dog_repelled}]
For the sake of clarity, we give the proof for initial configurations $X (0) = (X_1 (0), 0)$, $X_1 (0) \in \Z$, that are translates of the Heaviside configuration~$\xheaviside$,
so that~$\eta(0)$ is the all-zero
initial configuration; the general case is completely analogous.

Let us prove the first inequality in~\eqref{eq_dog_repelled}.
If we start from the all-zero
initial configuration, it follows from~\eqref{eq:X1-via-counting} that
\begin{equation}
\label{represent_X_1}
 -X_1(t) = \eta_1(t)+\eta_2(t)+\cdots,
\end{equation}
that is, in the terms of Jackson networks, the (negative) displacement
of the dog equals the total number of customers in the system.
Now, recall the customer random walk introduced in Section~\ref{sec:customer-walk}:
with the assumption~\eqref{eq:dog-sheep-rates} and $b=c=1$, the transition probabilities given at~\eqref{eq:P-def} are $P_{i,i-1} = P_{i,i+1} = 1/2$ for every $i \in \N$, so the customer random walk~$Q$ is simple symmetric random walk on $\ZP$ with absorption at~$0$. Hence, by Proposition~\ref{prop:customer-random-walk}, 
each customer performs a simple random walk (until leaving 
the system, i.e., when absorbed at~$0$),
time-changed by the waiting times at each queue, and the corresponding skeleton
walks are independent. 

With high probability,  by time~$t$ at least order~$t$
 customers will enter 
the system and make it to the second queue.
More specifically, denote that number by $N_t$; 
then, for small enough~$u>0$, a Poisson large-deviation
estimate gives us $\IP ( N_t\geq ut )\geq 1-\re^{-ct}$ for some
positive~$c$.
Then (recall Proposition~\ref{prop:customer-random-walk}\ref{prop:customer-random-walk-ii}), consider
the corresponding \emph{independent} random walks
with rate-2 exponential holding times, and notice that if
by some time~$t_1$ that random walk did not reach the first
queue, then that customer also did not do that (this is because
for actual customers the waiting times \emph{dominate} 
rate-2 exponential random variables). 
Now, a standard argument shows that,
for each such walk, the probability that it does not 
reach the first queue (starting from the second one)
by time~$t+1$ is at least~$\frac{v}{\sqrt{t}}$ for small enough~$v$
(indeed, it reaches $\sqrt{t}$ before reaching~$1$ with 
probability of order $\frac{1}{\sqrt{t}}$, and, starting 
at~$\sqrt{t}$, with a constant probability it does not reach~$1$
in time~$t+1$). Then (given that $N_t\geq ut$) an 
application of the Chernoff bound for the binomial distribution
shows that, by time~$t+1$ with probability at least 
$1-\re^{-c'\sqrt{t}}$, there will be at least
$\frac{uv}{2}\sqrt{t}$ such walks which never reached~$1$.
Therefore, at all moments in the time 
interval~$[t,t+1]$ with probability at least $1-\re^{-ct}-\re^{-c'\sqrt{t}}$,
 there will be at least
$\frac{uv}{2}\sqrt{t}$ customers in the system.
By~\eqref{represent_X_1} and Borel--Cantelli, this implies
the first inequality in~\eqref{eq_dog_repelled}.

To prove the second inequality in~\eqref{eq_dog_repelled},
one can reason 
using a \emph{conditional coupling} such as in~\cite[\S4.2]{brass}, as follows.
 Proposition~\ref{prop:stochastic-monotonicity} implies that
the configuration $\eta(t)$ is stochastically dominated
by the stationary configuration, which is 
described by an i.i.d.\ sequence of geometric random variables 
with expectation~$\frac{1-a}{a}$ (see Example~\ref{ex:lattice-atlas-first}).
It is then straightforward to obtain 
for any fixed $\gamma>\frac{1-a}{a}$ that, 
for any~$t$ (note that the number of extra customers
that can come to $\{1,\ldots,k\}$ during the time interval~$[t,t+1]$
is dominated by a Poisson random variable
with a constant rate),
\begin{equation}
\label{eq:LD_customers} 
\IP\big( \eta_1(s)+\cdots+\eta_k(s)\leq \gamma k
 \text{ for all }s\in[t,t+1]\big) \geq 1 - \re^{-ck},
\end{equation}
with some $c=c(a,\gamma)>0$.

Now, 
fix some $\gamma >\frac{1-a}{a}$;
Borel--Cantelli and~\eqref{eq:LD_customers}  then implies that, a.s., there is a (random) $t_0 \in \RP$ such that, for all $t \geq t_0$,
\[
 \eta_1(t)+\cdots +\eta_{\lfloor t^{(1+\delta)/2} \rfloor}(t)
 \leq \gamma t^{(1+\delta)/2} 
 < t^{\frac{1}{2}+\delta}.
\]
But then, if (recall~\eqref{represent_X_1})
$-X_1(t)\geq t^{\frac{1}{2}+\delta}$ for some $t \geq t_0$, we would necessarily
have that there exists $k>t^{(1+\delta)/2}$
such that $\eta_k(t)>0$. This would mean that, in the 
corresponding Jackson network there is a customer
who ``went too far'' (recall that we assumed that initially
there were no customers at all). 
However, it is straightforward to obtain from
Proposition~\ref{prop:customer-random-walk}\ref{prop:customer-random-walk-ii} together with a large deviation bound
for simple symmetric random walk that, 
with probability at least $1-t\re^{-ct^\delta}$ 
no customer that entered before time~$t$ 
could go further than $t^{\frac{1}{2}+\frac{\delta}{2}}$
by time~$t$.
Again by Borel--Cantelli, we obtain that, for all large enough~$t$,
$\eta_k(t)=0$ for all $k\geq t^{\frac{1}{2}+\frac{\delta}{2}}$. 
By the above observation,
this concludes the proof.
\end{proof}
 \begin{remark}
 \label{rem:dog_repelled_proof} 
    The same proof would work for  the case when 
 there are several dogs. Indeed, in that case the customer random walk~$Q$
 is not simple symmetric random walk,
 but a perturbation obtained by modifying the transition probabilities
 in a finite neighbourhood of~$0$, and this has no impact on the estimates needed above.
 \end{remark}

\section*{Acknowledgements}
\addcontentsline{toc}{section}{Acknowledgements}

The authors thank an anonymous referee for their comments, and Pablo Ferrari for his help with 
Proposition~\ref{prop:invariant-measure}.
The work of MM and AW was supported by EPSRC grant EP/W00657X/1.
SP was partially supported by
CMUP, member of LASI, which is financed by national funds
through FCT --- Funda\c{c}\~ao
para a Ci\^encia e a Tecnologia, I.P., 
under the project with reference UIDB/00144/2020. Revision of the paper was undertaken  during the programme ``Stochastic systems for anomalous diffusion'' (July--December 2024) hosted by the  Isaac Newton Institute, under EPSRC grant EP/Z000580/1.

\printbibliography

\end{document}